\newtheorem{thm}{{\bf Theorem}}[section]
\newtheorem{lem}[thm]{{\bf Lemma}}
\newtheorem{cor}[thm]{{\bf Corollary}}
\newtheorem{prop}[thm]{{\bf Proposition}}
\newtheorem{rem}[thm]{Remark}
\newtheorem{ex}[thm]{Example}
\newtheorem{ques}[thm]{Question}
\newtheorem{convention}[thm]{Convention}
\newtheorem{definition}[thm]{Definition}
\numberwithin{equation}{section}
\def\C{{\mathbb C}}
\def\Z{{\mathbb Z}}
\def\P{{\mathcal P}}
\def\T{{\mathbb T}}
\def\a{{\alpha}}
\def\b{{\beta}}
\def\c{{\gamma}}
\def\s{{\sigma}}
\begin{document} 

\title{Braids, orderings and minimal volume cusped hyperbolic 3-manifolds}

\date{2016/10/09}


\author[E. Kin]{Eiko Kin}

\address{%
       Department of Mathematics, Graduate School of Science, Osaka University Toyonaka, Osaka 560-0043, JAPAN
}
\email{%
        kin@math.sci.osaka-u.ac.jp
}

\author[D. Rolfsen]{Dale Rolfsen}

\address{Pacific Institute for the Mathematical Sciences and Department of Mathematics, 
University of British Columbia, Vancouver, BC, Canada V6T 1Z2}

\email{rolfsen@math.ubc.ca}

\subjclass[2010]{%
	Primary 20F60, 57M07, Secondary 57M50, 57M27, 
}

\keywords{%
ordered groups, bi-orderings, knots and links, braid groups, free groups, 
mapping class groups, fibered 3-manifolds}

\thanks{EK was supported by 
Grant-in-Aid for
Scientific Research (C) (No. JP15K04875), 
Japan Society for the Promotion of Science.  DR acknowledges support of the Canadian Natural Sciences and Engineering Research Council.} 

\begin{abstract}  It is well-known that there is a faithful representation of braid groups on automorphism groups of free groups, and it is also well-known that free groups are bi-orderable.
We investigate which $n$-strand braids give rise to automorphisms which preserve some bi-ordering of the free group $F_n$ of rank $n$.  
As a consequence of our work we find that of the two minimal volume hyperbolic $2$-cusped orientable $3$-manifolds, one has bi-orderable fundamental group whereas the other does not.  
We prove a similar result for the $1$-cusped case, and have further results for more cusps.  
In addition, we study pseudo-Anosov braids and find that typically those with minimal dilatation are not order-preserving. 
\end{abstract}

\maketitle

\section{Introduction}\label{introduction}

If $<$ is a strict total ordering of the elements of a group $G$ such that $g < h$ implies $fg < fh$ for all $f, g, h \in G$, we call $(G,<)$ a {\em left-ordered} group.  If the left-ordering $<$ is also invariant under right-multiplication, we call 
$(G,<)$ a {\em bi-ordered} group (sometimes known in the literature simply as ``ordered'' group).  If a group admits such an ordering it is said to be {\em left-} or {\em bi-orderable}.  
It is easy to see that a group is left-orderable if and only if it is right-orderable.   
If $(G, <)$ is a bi-ordered group, then $<$  is invariant under conjugation:
$g< h$ if and only if $fgf^{-1} < fhf^{-1}$ for all $f,g,h \in G$.  Nontrivial examples of bi-orderable groups are the free groups $F_n$ of rank $n$, as discussed in Appendix \ref{Ordering free groups}.

An automorphism $\phi$ of a group $G$ is said to {\it preserve an ordering} $<$ of $G$ if for every $f, g \in G$ we have $f < g \implies \phi(f) < \phi(g)$;  we also say $<$ is {\it $\phi$-invariant}.   
If $\phi: G \rightarrow G$ preserves an ordering $<$ of $G$, 
then the $n$th power $\phi^n$ preserves the ordering $<$ of $G$ for each integer $n$.
We note that if $\phi^n$ preserves an ordering it does not necessarily follow that $\phi$ does.

E. Artin \cite{Artin25, Artin47} observed that each $n$-strand braid corresponds to an automorphism of $F_n$.   This paper concerns the question of which braids give rise to automorphisms which preserve some bi-ordering of $F_n$.  In turn this is related to the orderability of the fundamental group of the complement of certain links in $S^3$, namely the braid closure together with its axis, which we call a braided link. 
 We pay special attention to pseudo-Anosov mapping classes and their stretch factors (dilatations), and cusped hyperbolic $3$-manifolds of small volume.  

This paper is organized as follows: Section~\ref{Braids and $Aut(F_n)$} reviews the relation among braids, mapping class groups, free group automorphisms and certain links in the 3-sphere $S^3$.  In 
Section~\ref{Orderable groups} we recall basic properties of orderable groups, with explicit bi-ordering of free groups further described in Appendix~\ref{Ordering free groups}.  The study of order-preserving braids and their relation to bi-ordering the group of the corresponding braided links is initiated in 
Section~\ref{section_OPbraids}.  Applications to cusped hyperbolic 3-manifolds of minimal volume are considered in Section~\ref{section_hyperbolic}.  
In Section~\ref{section_nonOP} we give many examples of non-order-preserving braids, including pseudo-Anosov braids with minimal dilation as well as large dilatations.  
We also find a family of pretzel links whose fundamental groups can not be bi-orderable. 
Appendix~\ref{Whitehead} is devoted to a proof that the fundamental group of the Whitehead link complement is bi-orderable.
\medskip

\noindent
{\bf Acknowledgments.} 
We thank Tetsuya Ito for helpful conversations and comments.

\section{Braids and $\mathrm{Aut}(F_n)$} \label{Braids and $Aut(F_n)$} Let $B_n$ be the $n$-strand braid group, which has the well-known presentation
with generators $\s_1, \dots , \s_{n-1}$ subject to the relations $\s_i\s_j = \s_j\s_i$ if $| i - j | >1$ and $\s_j\s_i\s_j = \s_i\s_j\s_i$ if 
$| i - j | =1$. See Figure~\ref{fig_halftwist}(1)(2).

\subsection{Mapping classes}
Let $D_n$ denote the disk with $n$ punctures, which we may picture as equally spaced along a diameter of the disk and labelled $1$ to $n$.  $\mathrm{Mod}(D_n)$ denotes the mapping class group of $D_n$ and $\mathrm{Mod}(D_n, \partial D)$ the mapping class group of homeomorphisms fixed on the boundary pointwise.  
There is a well-known isomorphism 
$$\bar{\Gamma}:  B_n \rightarrow \mathrm{Mod}(D_n, \partial D)$$ 
which sends $\s_i$ to a half twist $h_i$ which interchanges the punctures labelled $i$ and $i+1$, 
see Figure~\ref{fig_halftwist}(3).  
The kernel of the obvious map $\mathrm{Mod}(D_n, \partial D) \to  \mathrm{Mod}(D_n)$ is infinite cyclic, generated by a Dehn twist along a simple closed curve parallel to the boundary of the disk. 
Using the isomorphism $ \bar{\Gamma}: B_n \rightarrow \mathrm{Mod}(D_n, \partial D)$ together with this obvious map, 
we have the surjective homomorphism 
$$\Gamma: B_n  \rightarrow \mathrm{Mod}(D_n)$$ 
whose kernel is generated by the full twist  $\Delta_n^2 \in B_n$, 
where $\Delta_n \in B_n$ is the half twist.

Elements of $\mathrm{Mod}(D_n)$ are classified into three types: 
periodic, reducible and pseudo-Anosov, called Nielsen-Thurston types \cite{Thurston88}.   
If two given mapping classes are conjugate to each other, then their Nielsen-Thurston types are the same. 
We say that $\b \in B_n$ is {\it periodic} (resp. {\it reducible}, {\it pseudo-Anosov}) if its  mapping class $\Gamma(\beta) \in  \mathrm{Mod}(D_n)$ is of the corresponding type.

\subsection{Free group automorphisms}

Let $\beta $ be an $n$-strand braid.  
 Let $\phi: D_n \rightarrow D_n$ be a 
representative of the mapping class $\bar{\Gamma}(\beta) \in \mathrm{Mod}(D_n, \partial D)$. 
Obviously $\phi$ represents a mapping class $\Gamma(\beta) \in \mathrm{Mod}(D_n)$. 
If one passes to the induced map $\phi_*= \phi_{*p}: \pi_1(D_n,p) \rightarrow \pi_1(D_n,p)$ 
of the fundamental group of $D_n$, using a point $p$ on the boundary as basepoint, this defines the Artin representation 
$$B_n \to \mathrm{Aut}(F_n)$$ 
which can be defined on the generators as follows, 
where 
$x_1, x_2 \dots, x_n$ are the free generators of the free group $F_n$ of rank $n$ and 
$\mathrm{Aut}(F_n)$ is the group of automorphisms of $F_n$.  
The generator $\s_i$ induces the automorphism
\begin{equation}
\label{equation_Artin}
x_i \mapsto x_ix_{i+1}x_i^{-1}, \quad   x_{i+1} \mapsto x_i,  \quad  x_j \mapsto x_j \;{\rm if}\; j \ne i,i+1.
\end{equation}
See Figure~\ref{fig_Artin-i}. 
It is known that the Artin representation is faithful.  
Its image is the subgroup of automorphisms of $F_n$ that take each $x_i$ to a conjugation of some $x_j$ and which take the product $x_1x_2 \cdots x_n$ to itself.

We note that if $\Gamma(\beta) = \Gamma(\beta') \in \mathrm{Mod}(D_n)$ for $n$-strand braids $\beta$ and $\beta'$, 
then $\beta' = \beta \Delta_n^{2k}$ for some integer $k$. 
The images of $\beta$ and $\beta \Delta_n^{2k}$ under the Artin representation are the same up to an inner automorphism 
$$x \rightarrow (x_1 x_2 \cdots x_n)^{k} x (x_1 x_2 \cdots x_n)^{-k}.$$

By abuse of notation, from now on we will use the same symbol $\b$ for the braid, the mapping class 
$\Gamma(\beta) \in \mathrm{Mod}(D_n)$ and the corresponding automorphism of $F_n$.

An automorphism $\phi$ of $F_n = \langle x_1, \dots, x_n \rangle$ is said to be {\em symmetric} if for each generator $x_j$, the image $\phi(x_j)$ is a conjugate of some $x_k$.  
Every automorphism on $F_n$ corresponding to the action of a braid on $F_n$ is symmetric. 
A symmetric automorphism $\phi: F_n \rightarrow F_n$ is {\em pure} if $\phi$ sends each $x_i$ to a conjugate of itself.  
Pure braids (see Section~\ref{subsection_Pure-braids}) induce symmetric and pure automorphisms.

Since braid words, like paths, are typically read from left to right, we adopt the convention that braids act on $F_n$ on the right.  
If $x \in F_n$, we denote the action of $\b \in B_n$ by $x \to x^\b$, and if $\b, \c \in B_n$ we have the identity 
$x^{\b\c} = (x^\b)^\c$.

\begin{definition}
An $n$-strand braid $\b$ is said to be order-preserving if there exists some bi-ordering $<$ of $F_n$ preserved by the automorphism $x \to x^\b$ of $F_n$.
\end{definition}

One sees that 
$\b \in B_n$ is order-preserving if and only if $\b\Delta_n^{2k}$ is order-preserving for some (hence all) $k \in \Z$ (Corollary~\ref{cor_multDelta^2}).

As discussed below, there is some ambiguity in defining the action of $B_n$ on $F_n$, depending on choices of a representative of the mapping class $ \beta \in \mathrm{Mod}(D_n)$, 
basepoint in $D_n$ and generators of $\pi_1(D_n)$.  As we'll see, this ambiguity is irrelevant in the question of whether a braid $\beta$ is order-preserving.

\subsection{Basepoints}  
It is sometimes convenient to use a basepoint and generators different from that used in the Artin representation.  Specifically, we consider a representative $\phi: D_n \to D_n$ 
of a mapping class in $\mathrm{Mod}(D_n, \partial D)$ and we may assume that 
$p, q \in D_n$ are two different points (not necessarily on the boundary of the disk), each fixed by $\phi$.  
Then we have induced maps
$\phi_{*p}: \pi_1(D_n, p) \to \pi_1(D_n, p)$ and $\phi_{*q}: \pi_1(D_n, q) \to \pi_1(D_n, q)$.  Of course,
$\pi_1(D_n, p)$ and $\pi_1(D_n, q)$ are isomorphic, but not canonically.  
We can construct an isomorphism by choosing a path $\ell$ in $D_n$ from $q$ to $p$ which then defines an isomorphism
$h: \pi_1(D_n, p) \to \pi_1(D_n, q)$ sending the class of a loop $\a$ in $D_n$ based at $p$ to the class of the loop $\ell\a\ell^{-1}$ based at $q$.  Consider the diagram 
$$\begin{CD}
\pi_1(D_n, p)         @>{\phi_{*p}}>>        \pi_1(D_n, p) \\
@V{h}VV                                                   @V{h}VV   \\
\pi_1(D_n, q)         @>{\phi_{*q}}>>        \pi_1(D_n, q)
\end{CD}$$
which is not necessarily commutative.
However, we leave it to the reader to check the following
\begin{prop}
The above diagram commutes up to conjugation.  Specifically, $h \circ \phi_{*p}$ equals
$\phi_{*q} \circ h$ followed by a conjugation in $\pi_1(D_n, q)$, the conjugating element being the class of the loop $\ell(\phi \circ \ell)^{-1}$ in $\pi_1(D_n, q)$.
\end{prop}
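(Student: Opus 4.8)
The plan is to verify the statement by a direct computation at the level of based loops, tracking how the homeomorphism $\phi$ interacts with concatenation. Let $\a$ be a loop in $D_n$ based at $p$, representing a class in $\pi_1(D_n,p)$. I would use two defining facts: first, $\phi_{*p}([\a]) = [\phi\circ\a]$, which is again a loop based at $p$ since $\phi(p)=p$; and second, $h([\a]) = [\ell\a\ell^{-1}]$, where concatenation is read from left to right so that $\ell\a\ell^{-1}$ starts and ends at $q$. Before computing I would record the only two properties of $\phi$ that are needed: being a homeomorphism, $\phi$ respects concatenation and inversion of paths, so that $\phi\circ(\mu\nu) = (\phi\circ\mu)(\phi\circ\nu)$ and $\phi\circ(\mu^{-1}) = (\phi\circ\mu)^{-1}$. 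I would also note that since $\phi(q)=q$ and $\phi(p)=p$, the image $\phi\circ\ell$ is again a path from $q$ to $p$, so $\ell(\phi\circ\ell)^{-1}$ is a genuine loop based at $q$ and defines the claimed conjugating class $c = [\ell(\phi\circ\ell)^{-1}]$ in $\pi_1(D_n,q)$.

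Next I would evaluate the two ways around the square on the class $[\a]$. Going across then down gives $h(\phi_{*p}([\a])) = [\ell(\phi\circ\a)\ell^{-1}]$. Going down then across gives $\phi_{*q}(h([\a])) = [\phi\circ(\ell\a\ell^{-1})]$, which by the functoriality properties above equals $[(\phi\circ\ell)(\phi\circ\a)(\phi\circ\ell)^{-1}]$. These two expressions differ only in the path used to bring the loop $\phi\circ\a$ back to the basepoint $q$: the first uses $\ell$, the second uses $\phi\circ\ell$.

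It then remains to conjugate the second expression into the first. Writing $c^{-1} = [(\phi\circ\ell)\ell^{-1}]$ and forming $c\cdot\phi_{*q}(h([\a]))\cdot c^{-1}$, the adjacent factors $(\phi\circ\ell)^{-1}(\phi\circ\ell)$ cancel on both sides, leaving exactly $[\ell(\phi\circ\a)\ell^{-1}] = h(\phi_{*p}([\a]))$. Hence $h(\phi_{*p}(x)) = c\,\phi_{*q}(h(x))\,c^{-1}$ for every $x \in \pi_1(D_n,p)$, which is precisely the assertion that the square commutes up to the conjugation by $c = [\ell(\phi\circ\ell)^{-1}]$.

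The computation itself is just cancellation, so the only real care needed is bookkeeping. I expect the one mild obstacle to be keeping the concatenation convention and the basepoint data consistent throughout—in particular, recognizing that $\phi\circ\ell$ need not be homotopic to $\ell$ rel endpoints, which is exactly why the square fails to commute on the nose and a nontrivial conjugation is forced. Once the left-to-right convention is fixed, nothing beyond the functoriality of $\phi$ under path operations and the fixed-point conditions $\phi(p)=p$, $\phi(q)=q$ is required.
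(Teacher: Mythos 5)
Your computation is correct and is precisely the direct verification the paper leaves to the reader: both composites are evaluated on a class $[\a]$, the only discrepancy is whether $\phi\circ\a$ is brought back to $q$ along $\ell$ or along $\phi\circ\ell$, and conjugating by $c=[\ell(\phi\circ\ell)^{-1}]$ cancels that discrepancy, giving $h(\phi_{*p}(x))=c\,\phi_{*q}(h(x))\,c^{-1}$. Your bookkeeping of the left-to-right concatenation convention and of the fixed-point hypotheses $\phi(p)=p$, $\phi(q)=q$ (which make $\ell(\phi\circ\ell)^{-1}$ a genuine loop at $q$) is exactly right.
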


\begin{cor}
\label{cor_well-defined}
The map $\phi_{*p}$ preserves a bi-ordering of $\pi_1(D_n, p) $ if and only if $\phi_{*q}$ preserves a bi-ordering of $\pi_1(D_n, q).$
\end{cor}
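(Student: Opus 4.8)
The plan is to transport a given bi-ordering across the isomorphism $h$ and then absorb the conjugation discrepancy in the Proposition using the fact, recalled in the introduction, that every bi-ordering is invariant under conjugation. This last observation is what makes the ``up to conjugation'' commutativity harmless.

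First I would note that a group isomorphism carries a bi-ordering to a bi-ordering: given a bi-ordering $<_p$ of $\pi_1(D_n,p)$, the relation defined by $a <_q b \iff h^{-1}(a) <_p h^{-1}(b)$ is a bi-ordering of $\pi_1(D_n,q)$, since $h$ respects products on both the left and the right. By construction $h$ is then order-preserving from $(\pi_1(D_n,p),<_p)$ to $(\pi_1(D_n,q),<_q)$, and so is $h^{-1}$ in the reverse direction.

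Next I would rewrite the Proposition in operator form. Writing $c$ for conjugation by $w=[\ell(\phi\circ\ell)^{-1}]$ in $\pi_1(D_n,q)$, the Proposition reads $h\circ\phi_{*p}=c\circ\phi_{*q}\circ h$, hence $h\circ\phi_{*p}\circ h^{-1}=c\circ\phi_{*q}$. Now suppose $\phi_{*p}$ preserves $<_p$, and let $<_q$ be its transport. For $a<_q b$ we have $h^{-1}(a)<_p h^{-1}(b)$; applying the order-preserving map $\phi_{*p}$ and then $h$ gives $(h\circ\phi_{*p}\circ h^{-1})(a)<_q(h\circ\phi_{*p}\circ h^{-1})(b)$, that is, $c(\phi_{*q}(a))<_q c(\phi_{*q}(b))$.

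Finally, since $<_q$ is a \emph{bi}-ordering it is invariant under the inner automorphism $c$, and therefore under $c^{-1}$; applying $c^{-1}$ to the previous inequality yields $\phi_{*q}(a)<_q\phi_{*q}(b)$, so $\phi_{*q}$ preserves $<_q$. The reverse implication is identical after interchanging the roles of $p$ and $q$ and replacing $h$ by $h^{-1}$. The only delicate point — the ``main obstacle,'' such as it is — is precisely the failure of the diagram to commute on the nose: the conjugation factor $c$ would be fatal for a one-sided (left) ordering, and it is exactly the two-sided invariance of bi-orderings under conjugation that neutralizes it.
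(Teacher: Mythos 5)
Your argument is correct and is essentially the paper's own proof: both transport the bi-ordering across $h$ by the same formula $a <_q b \iff h^{-1}(a) <_p h^{-1}(b)$ and then use conjugation-invariance of bi-orderings to absorb the discrepancy in the non-commuting diagram. You have merely written out the "one checks" step that the paper leaves to the reader.
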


\begin{proof}
If $\phi_{*p}$ preserves the bi-ordering $<_p$ of $\pi_1(D_n, p)$, define a bi-ordering $<_q$ of 
$\pi_1(D_n, q)$ by the formula $f <_q g \iff h^{-1}(f) <_p h^{-1}(g)$ for $f, g \in \pi_1(D_n, q)$.  Then
one checks that $f<_q g \implies \phi_{*q}(f) <_q \phi_{*q}(g)$ using conjugation invariance of bi-orderings.  The converse is proved similarly.
\end{proof}

If one passes from $\mathrm{Mod}(D_n, \partial D)$ to $\mathrm{Mod}(D_n)$, there is a further ambiguity regarding the action of a braid on $\pi_1(D_n)$.  However, this ambiguity corresponds to conjugation by a power of $\Delta_n^2$, so again it is irrelevant to the question of preserving a bi-ordering. 
More concretely,   
given an $n$-strand braid $\beta$, let $\phi: D_n \rightarrow D_n$ be any representative of the mapping class $\beta \in \mathrm{Mod}(D_n)$. 
We take any basepoint $q$ of $D_n$ possibly $\phi(q) \ne q$. 
By choosing a path $\ell$ in $D_n$ from $q$ to $\phi(q)$, 
we have the induced map 
\begin{equation}
\label{equation_induced-map}
\phi_{*q}: \pi_1(D_n,q) \rightarrow  \pi_1(D_n,q)
\end{equation}
which sends the class of a loop $\alpha$ in $D_n$ based at $q$ 
to the class of the loop $\ell \alpha \ell^{-1}$ based at the same point. 
By using  Corollary~\ref{cor_well-defined}, one sees that 
 $\beta$ is order-preserving if and only if
$\phi_{*q}: \pi_1(D_n,q) \rightarrow \pi(D_n,q)$ 
preserves a bi-ordering of $\pi_1(D_n,q)$. 

By abuse of notation again, 
we denote the induced map $\phi_{*q}$ by $\beta$, when $q$ is specified. 

These remarks show that if one allows different choices of basepoint, say a basepoint $q$, 
the action of $B_n$ on $F_n$ should really be regarded as a representation 
$$B_n \to   \mathrm{Out}(\pi_1(D_n,q))  \cong \mathrm{Out}(F_n),$$ 
the group of outer automorphisms.  
Recall that  $\mathrm{Out}(G) = \mathrm{Aut}(G)/\mathrm{Inn}(G)$, where 
$\mathrm{Inn}(G)$ is the (normal) subgroup of inner automorphisms of a group $G$.

%


\subsection{Mapping tori and braided links}

For a braid $\b \in B_n$, we denote the mapping torus by 
$$\T_{\b} = D_n \times [0, 1] / (y, 1) \sim (\phi(y), 0),$$ 
where $\phi: D_n \rightarrow D_n$ is a representative of $ \beta \in  \mathrm{Mod}(D_n)$. 
By the hyperbolization theorem of Thurston~\cite{Thurston98}, 
$\T_\b$ is hyperbolic if and only if $\b$ is pseudo-Anosov.

The closure $\widehat{\b}$ of a braid $\beta$ is a knot or link in the $3$-sphere $S^3$ and the {\em braided link}, denote by $\mathrm{br}(\b)$, is the closure $\widehat{\b}$, 
together with the braid axis $A$, which is an unknotted curve that $\widehat{\b}$ runs around in a monotone manner: 
$\mathrm{br}(\b) = \widehat{\b} \cup A$. 
See Figure~\ref{fig_braid_axis}(1)(2).  
Whereas all links can be realized as $\widehat{\b}$, this is not true for $\mathrm{br}(\b)$, as each component of 
$\widehat{\b}$ has nonzero linking number with the braid axis $A$.  As an example the Whitehead link considered in Appendix~\ref{Whitehead} is not a braided link.

\begin{center}
\begin{figure}
\includegraphics[width=3.5in]{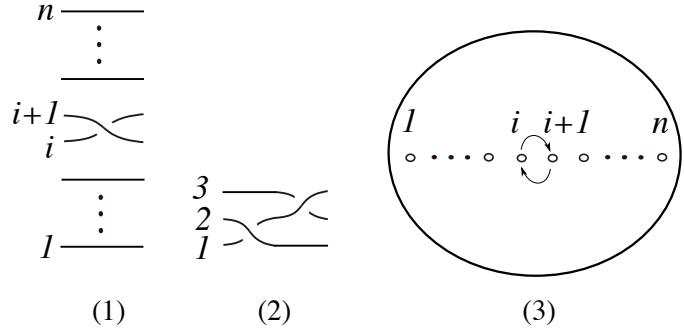}
\caption{(1) $\sigma_i \in B_n$.  
(2) $\sigma_1 \sigma_2^{-1} \in B_3$. 
(3)  $h_i \in \mathrm{Mod}(D_n, \partial D)$.}
\label{fig_halftwist}
\end{figure}
\end{center}

\begin{center}
\begin{figure}
\includegraphics[width=5in]{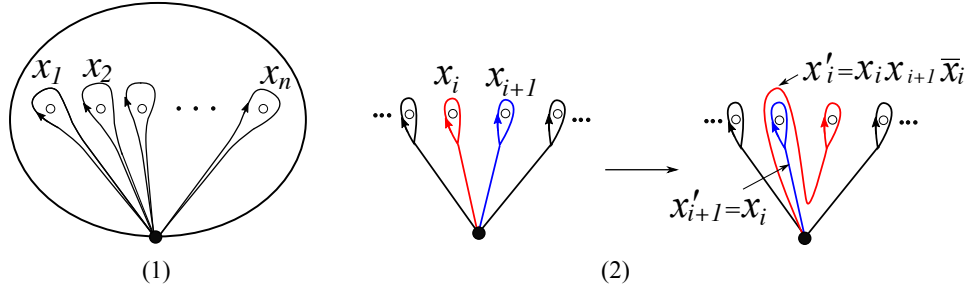}
\caption{A basepoint $\bullet$  of $\pi_1(D_n)$ lies on $\partial D$. 
(1) Generators $x_i$'s of $F_n$. 
(2) $\sigma_i: F_n \rightarrow F_n$, 
where $x':=$ the image of $x$ under $\sigma_i$ and 
$\overline{x}:= x^{-1}$.} 
\label{fig_Artin-i}
\end{figure}
\end{center}

\begin{center}
 \begin{figure}
\includegraphics[width=4.5in]{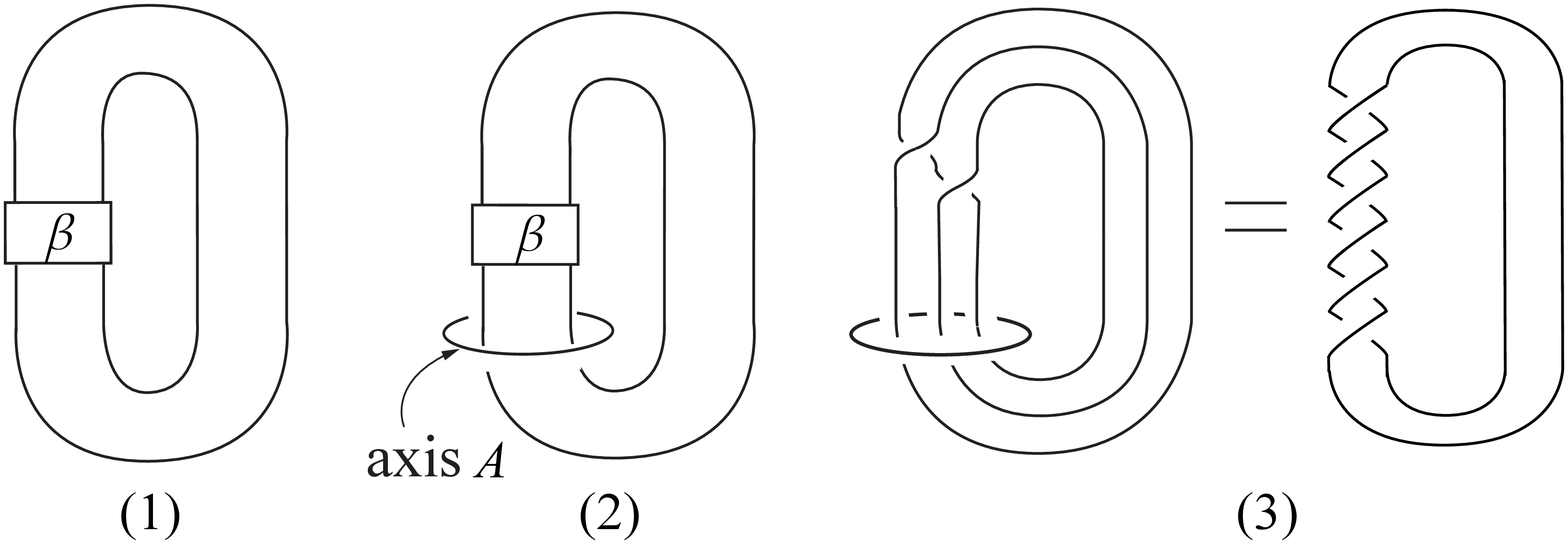}
\caption{(1) Closure $\widehat{\beta}$. (2) $\mathrm{br}(\beta) = \widehat{\beta} \cup A$. 
(3) $\mathrm{br}(\sigma_1 \sigma_2)$ is equivalent to the $(6,2)$-torus link.}
\label{fig_braid_axis}
\end{figure}
\end{center}

We see that  $\T_\b$ is homeomorphic with the complement of the braid closure 
$\widehat{\b}$ in the solid torus 
$D^2 \times S^1$.  The interior $Int(\T_\b)$ can be identified with the complement of $\widehat{\b} \cup A$ in $S^3$, so we have the following.


\begin{lem}\label{lemma}
$Int(\T_\b)$ is homeomorphic to $S^3 \setminus \mathrm{br}(\b)$.
\end{lem}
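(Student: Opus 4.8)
The plan is to realize both manifolds as the complement of the braid strands in a standardly embedded solid torus, and then account separately for the braid axis. The paragraph preceding the statement already records the two key facts, so I would organize the argument in two stages: first identify $\T_\b$ with a complement inside the solid torus $D^2\times S^1$, and then position that solid torus inside $S^3$ so that the core of its complementary solid torus is precisely the axis $A$.

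For the first stage I would use the geometric description of $\b$ as a motion of $n$ points in $D^2$ over $t\in[0,1]$, whose trace is a collection of $n$ disjoint arcs in $D^2\times[0,1]$. Gluing $D^2\times\{0\}$ to $D^2\times\{1\}$ closes these arcs into the braid closure $\widehat{\b}$ sitting in the solid torus $D^2\times S^1$, and the complement of the arcs is a bundle over $S^1$ with fiber the $n$-punctured disk $D_n$ and monodromy a representative $\phi$ of $\b\in\mathrm{Mod}(D_n)$. This yields a homeomorphism $\T_\b\cong (D^2\times S^1)\setminus N(\widehat{\b})$ under which the boundary tori of $\T_\b$ coming from the punctures correspond to $\partial N(\widehat{\b})$ and the outer boundary $\partial D\times S^1$ corresponds to $\partial(D^2\times S^1)$.

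For the second stage I would embed $D^2\times S^1$ in $S^3$ via the standard genus-one Heegaard splitting $S^3=(D^2\times S^1)\cup_\partial V'$, arranged so that the core of the complementary solid torus $V'$ is the unknotted axis $A$ around which $\widehat{\b}$ winds monotonically. Since $V'\setminus(\text{core})$ collapses onto $\partial(D^2\times S^1)$, removing $A$ identifies $S^3\setminus A$ with the open solid torus $Int(D^2\times S^1)$, whence $S^3\setminus\mathrm{br}(\b)=(S^3\setminus A)\setminus\widehat{\b}\cong Int(D^2\times S^1)\setminus\widehat{\b}$. On the other side, passing to interiors in the first-stage identification gives $Int(\T_\b)\cong Int(D^2\times S^1)\setminus\widehat{\b}$, because taking the interior deletes both the outer torus and the tori $\partial N(\widehat{\b})$. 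Comparing the two descriptions yields $Int(\T_\b)\cong S^3\setminus\mathrm{br}(\b)$.

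I expect the only genuinely delicate point to be the first stage: checking that the complement of the geometric braid in $D^2\times[0,1]$ really is a product $D_n\times[0,1]$ whose closing-up monodromy is the prescribed mapping class $\phi$, and not a conjugate or its inverse. This is exactly the braid/mapping-class correspondence reviewed earlier in this section, so I would invoke that identification directly rather than reprove it. The remaining ingredients are routine: the observation that deleting the core of a Heegaard solid torus leaves the open complementary solid torus, and the bookkeeping that matches the boundary tori of $\T_\b$ with the components of $\widehat{\b}$ on one hand and with the axis $A$ on the other.
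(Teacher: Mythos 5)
Your proposal is correct and follows the same route as the paper, which proves the lemma only by the two-sentence remark preceding it: identify $\T_\b$ with the complement of $\widehat{\b}$ in the solid torus $D^2\times S^1$, then identify the interior with the complement of $\widehat{\b}\cup A$ in $S^3$ via the standard genus-one splitting whose complementary core is the axis. You simply supply the details the paper leaves implicit; no divergence in method.
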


Of course the fundamental group of $Int(\T_\b)$  is isomorphic with that of $\T_\b$, which in turn is the semidirect product 
$$\pi_1(S^3 \setminus br(\b)) \cong \pi_1(\T_\b) \cong F_n \rtimes_\b \Z.$$   
There is a (split) exact sequence 
$$1 \rightarrow F_n \rightarrow \pi_1(\T_\b) \rightarrow \Z = \langle t \rangle \rightarrow 1.$$
\noindent
One may consider $F_n \rtimes_\b \Z$ as the set of ordered pairs 
$$F_n \rtimes_\b \Z = \{(f, t^p)\ |\ f \in F_n, \ p \in \Z\},$$ with the multiplication given by 
$$(f, t^p)(g, t^q) = (ft^pgt^{-p}, t^{p+q}), \quad {\rm where} \quad tgt^{-1} = g^{\b}.$$

\section{Orderable groups} \label{Orderable groups}
In this section we  outline a few well-known facts about orderable groups.  For more details, see \cite{CR15,MR77}.  Further details regarding bi-ordering of $F_n$ may be found in Appendix \ref{Ordering free groups}.

\begin{prop} \label{positive cone} A group $G$ is left-orderable if and only if there exists a sub-semigroup $\P \subset G$ such that for every 
$g \in G$ exactly one of $g = 1$, $g \in \P$ or $g^{-1} \in \P$ holds.  
\end{prop}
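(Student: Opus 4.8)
The plan is to establish the correspondence between left-orderings of $G$ and the sets $\P$ in the statement, which are the \emph{positive cones} of such orderings, by constructing one from the other in each direction and checking the relevant axioms.

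For the forward direction, I would suppose $(G,<)$ is left-ordered and set $\P = \{g \in G : 1 < g\}$. First I would check that $\P$ is a sub-semigroup: if $1<g$ and $1<h$, then left-multiplying $1 < h$ by $g$ gives $g < gh$, and combining this with $1 < g$ by transitivity yields $1 < gh$, so $gh \in \P$. For the trichotomy, since $<$ is a strict total order, for each $g$ exactly one of $g=1$, $1<g$, $g<1$ holds; it then suffices to observe that $g<1$ is equivalent to $1 < g^{-1}$ (left-multiply $g<1$ by $g^{-1}$), that is, to $g^{-1} \in \P$. Hence exactly one of $g=1$, $g \in \P$, $g^{-1}\in\P$ holds.

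For the converse, given $\P$ as in the statement, I would define a relation by $g < h \iff g^{-1}h \in \P$ and verify the axioms in turn. Irreflexivity follows because $g^{-1}g = 1 \notin \P$, since the trichotomy applied to $1$ forces $1\notin\P$. Totality and antisymmetry are exactly the trichotomy applied to the element $g^{-1}h$, whose inverse is $h^{-1}g$. Transitivity uses the semigroup property: from $g^{-1}h \in \P$ and $h^{-1}k \in \P$ we get $g^{-1}k = (g^{-1}h)(h^{-1}k) \in \P$. Finally, left-invariance is immediate, since for any $f$ one has $(fg)^{-1}(fh) = g^{-1}h$, whence $g<h \iff fg<fh$.

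The argument is entirely formal, so I do not expect a serious obstacle; the two points worth keeping in mind are that the sub-semigroup (rather than merely subset) hypothesis on $\P$ is precisely what delivers transitivity, and that the one-sided cancellation in $(fg)^{-1}(fh)=g^{-1}h$ is what makes the induced ordering \emph{left}-invariant rather than two-sided. One can also check that the two constructions are mutually inverse, confirming the correspondence is a bijection, though this is not needed for the stated equivalence.
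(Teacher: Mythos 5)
Your proof is correct and follows exactly the correspondence the paper itself indicates (the positive cone $\P_< = \{g : 1<g\}$ in one direction, and the ordering $g<h \iff g^{-1}h \in \P$ in the other); the paper merely states this correspondence without writing out the verifications, which you have supplied accurately.
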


Indeed such a $\P$ defines a left-ordering $< \; = \; <_\P$ by the rule $g < h \iff g^{-1}h \in \P$.  
Conversely, a left-ordering $<$ defines a {\it positive cone} $\P = \P_< := \{g \in G \mid 1 < g \}$ satisfying the conditions of Proposition~\ref{positive cone}.

\begin{prop} 
\label{prop_BO-criterion}
A group $G$ is bi-orderable if and only if it posesses $\P \subset G$ as in Proposition \ref{positive cone}, and in addition $g^{-1}{\P}g = \P$ for all $g \in G$.
\end{prop}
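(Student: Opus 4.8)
The plan is to prove both implications by passing back and forth between the bi-ordering $<$ and its positive cone $\P = \P_< = \{g \in G \mid 1 < g\}$, exactly as in Proposition~\ref{positive cone}, and to observe that the extra hypothesis $g^{-1}\P g = \P$ is precisely the positive-cone translation of right-invariance of $<$.

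For the forward direction, I would assume $G$ carries a bi-ordering $<$ and set $\P = \P_<$. Since a bi-ordering is in particular a left-ordering, Proposition~\ref{positive cone} already gives that $\P$ is a sub-semigroup satisfying the trichotomy condition, so only conjugation invariance remains. To check it, fix $g \in G$ and $h \in \P$, so that $1 < h$. Right-multiplying by $g$ and using right-invariance gives $g < hg$; then left-multiplying by $g^{-1}$ and using left-invariance gives $1 < g^{-1}hg$, i.e. $g^{-1}hg \in \P$. Hence $g^{-1}\P g \subseteq \P$, and applying the same argument with $g^{-1}$ in place of $g$ yields the reverse inclusion, so $g^{-1}\P g = \P$.

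For the converse, I would start from a sub-semigroup $\P$ as in Proposition~\ref{positive cone} satisfying $g^{-1}\P g = \P$ for all $g$, and form the associated left-ordering defined by $g < h \iff g^{-1}h \in \P$. Proposition~\ref{positive cone} guarantees that this is indeed a left-ordering, so it remains only to verify right-invariance. Suppose $g < h$, i.e. $g^{-1}h \in \P$, and let $f \in G$. Then the single computation $(gf)^{-1}(hf) = f^{-1}(g^{-1}h)f \in f^{-1}\P f = \P$ shows $gf < hf$, as required, and hence $<$ is a bi-ordering.

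The two directions are short and nearly symmetric, so I do not expect a genuine obstacle. The only point needing care is the bookkeeping that identifies right-invariance of the ordering with conjugation-invariance of the positive cone, which reduces to the identity $(gf)^{-1}(hf) = f^{-1}(g^{-1}h)f$; everything about the sub-semigroup and trichotomy properties is inherited for free from Proposition~\ref{positive cone}.
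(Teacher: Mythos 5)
Your argument is correct: the identification of right-invariance of $<$ with conjugation-invariance of the positive cone via $(gf)^{-1}(hf) = f^{-1}(g^{-1}h)f$ is exactly the standard proof, and both inclusions for $g^{-1}\P g = \P$ are handled properly. The paper itself states this proposition without proof as a well-known fact (citing \cite{CR15,MR77}), so there is nothing to compare against; your write-up supplies precisely the expected argument.
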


We note that a left- or bi-ordering of $G$ is preserved by an automorphism $\phi:G \to G$ if and only if $\phi(\P) = \P$, where $\P$ is the positive cone of the ordering.

\begin{prop} 
\label{prop_extension-LO}
Suppose $1 \rightarrow K  \stackrel{i}{\hookrightarrow} G \stackrel{p}{\rightarrow} H \rightarrow 1$ is an exact sequence of groups.  If $K$ and $H$ are left-ordered with positive cones $\P_K$ and $\P_H$ respectively, then $G$ is left-orderable using the positive cone $\P_G := i(\P_K) \cup p^{-1}(\P_H)$.  
\end{prop}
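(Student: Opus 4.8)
The plan is to verify directly that the proposed set $\P_G := i(\P_K) \cup p^{-1}(\P_H)$ satisfies the two conditions of Proposition~\ref{positive cone}: that it is a sub-semigroup of $G$, and that for every $g \in G$ exactly one of $g = 1$, $g \in \P_G$, $g^{-1} \in \P_G$ holds. Intuitively $\P_G$ is the positive cone of the lexicographic order on $G$ that compares elements first by their images under $p$ in the ordered quotient $H$, and only when those images coincide (equivalently, when the element lies in $\ker p = i(K)$) compares them by their $K$-coordinate. The single observation that makes the whole verification go smoothly is that the two pieces are disjoint: since $i(\P_K) \subseteq i(K) = \ker p$ while $1_H \notin \P_H$, no element of $i(K)$ can lie in $p^{-1}(\P_H)$. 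Hence each element of $\P_G$ belongs to exactly one of $i(\P_K)$ or $p^{-1}(\P_H)$.

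For the sub-semigroup property I would take $a, b \in \P_G$ and split into four cases. If $a, b \in p^{-1}(\P_H)$, then $p(ab) = p(a)p(b) \in \P_H$ because $\P_H$ is a semigroup, so $ab \in p^{-1}(\P_H)$. If $a, b \in i(\P_K)$, writing $a = i(k_1)$ and $b = i(k_2)$ with $k_1, k_2 \in \P_K$ gives $ab = i(k_1 k_2) \in i(\P_K)$ since $\P_K$ is a semigroup. In each mixed case the factor lying in $i(\P_K)$ is killed by $p$, so $p(ab)$ equals the nontrivial image ($p(a)$ or $p(b)$) that lies in $\P_H$, placing $ab$ in $p^{-1}(\P_H)$. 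Thus $ab \in \P_G$ in all cases.

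For the trichotomy I would distinguish whether $p(g) = 1_H$. If $p(g) = 1_H$ then $g = i(k)$ for a unique $k \in K$, and since $g \notin p^{-1}(\P_H)$, the status of $g$ and $g^{-1}$ with respect to $\P_G$ is dictated entirely by the trichotomy for $k$ in $(K, \P_K)$, which transfers through the injection $i$. If $p(g) \neq 1_H$ then $g \notin \ker p$, so neither $g$ nor $g^{-1}$ lies in $i(\P_K)$, and the status is dictated entirely by the trichotomy for $p(g)$ in $(H, \P_H)$, using $p(g^{-1}) = p(g)^{-1}$. Either way exactly one of the three alternatives holds. The argument is largely bookkeeping, and the only genuinely load-bearing point---the place where exactness of the sequence is used---is the disjointness observation $i(\P_K) \subseteq \ker p$ together with $1_H \notin \P_H$; this is what prevents the two halves of $\P_G$ from interacting in both the closure and trichotomy checks, and it is the step I would be most careful to state precisely.
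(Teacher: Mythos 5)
Your verification is correct and complete, and it is exactly the standard argument the paper has in mind: the paper states this proposition without proof, remarking only that $\P_G$ is the positive cone of the lexicographic order on the extension, and your case analysis (with the key disjointness observation $i(\P_K)\subseteq\ker p$, $1_H\notin\P_H$) is the expected way to check the two conditions of Proposition~\ref{positive cone}. Nothing to add.
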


This is sometimes called the {\em lexicographic} order of $G$, considered as an extension.

\begin{prop} 
\label{prop_extension}
    In Proposition~\ref{prop_extension-LO}, 
    if $K$ and $H$ are bi-ordered, then the formula $\P_G = i(\P_K) \cup p^{-1}(\P_H)$ defines a bi-ordering of $G$ if and only if the bi-ordering of $K$ is respected under conjugation by elements of $G$; equivalently $g^{-1}{\P_K}g = \P_K$ for all $g \in G$. 
\end{prop}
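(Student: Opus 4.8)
The plan is to reduce the claim to the conjugation-invariance criterion of Proposition~\ref{prop_BO-criterion}. By Proposition~\ref{prop_extension-LO} we already know that $\P_G = i(\P_K) \cup p^{-1}(\P_H)$ is a sub-semigroup satisfying the trichotomy of Proposition~\ref{positive cone}, so it is the positive cone of a left-ordering of $G$. By Proposition~\ref{prop_BO-criterion}, this ordering is a bi-ordering precisely when $g^{-1}\P_G g = \P_G$ for every $g \in G$. Hence the whole statement amounts to showing that this last condition is equivalent to $g^{-1}\P_K g = \P_K$ for all $g \in G$, where I identify $K$ with $i(K)$ and $\P_K$ with $i(\P_K)$.

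First I would record how conjugation interacts with the two ``layers'' of $\P_G$. Since $i(K) = \ker p$ is normal, conjugation by any $g$ preserves $i(K)$, and it sends an element with $p$-image in $\P_H$ to an element with $p$-image in $p(g)^{-1}\P_H p(g)$. Because $H$ is bi-ordered, $\P_H$ is conjugation-invariant in $H$, so $p(g)^{-1}\P_H p(g) = \P_H$; thus conjugation maps $p^{-1}(\P_H)$ onto itself for every $g$. In other words the $H$-layer is automatically conjugation-invariant, independently of $K$. This is the key observation, and it is exactly where bi-orderability of $H$ (not merely left-orderability) is used.

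Next I would pin down $\P_G \cap i(K)$. An element of $p^{-1}(\P_H)$ has nontrivial $p$-image, so it cannot lie in $i(K) = \ker p$; consequently $\P_G \cap i(K) = i(\P_K)$. For the forward direction, if $\P_G$ is conjugation-invariant then so is its intersection with the normal subgroup $i(K)$, which gives $g^{-1}i(\P_K)g = i(\P_K)$, i.e.\ the stated condition. For the converse, assuming $g^{-1}i(\P_K)g = i(\P_K)$ for all $g$ and combining this with the automatic invariance of the $H$-layer, I would compute $g^{-1}\P_G g = (g^{-1}i(\P_K)g) \cup (g^{-1}p^{-1}(\P_H)g) = i(\P_K) \cup p^{-1}(\P_H) = \P_G$, and Proposition~\ref{prop_BO-criterion} then finishes the proof.

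I do not expect a serious obstacle here; the only point requiring care is the bookkeeping that conjugation genuinely respects the layer decomposition, so that invariance can be checked layer by layer, and that $\P_G \cap i(K)$ is exactly $i(\P_K)$ rather than something larger. Everything else is a direct application of the earlier propositions.
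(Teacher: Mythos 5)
Your argument is correct and complete. The paper itself states Proposition~\ref{prop_extension} without proof (it is listed among the well-known facts with references to the ordered-groups literature), and your reduction to Proposition~\ref{prop_BO-criterion} — using that $i(K)=\ker p$ is normal, that $p^{-1}(\P_H)$ is automatically conjugation-invariant because $\P_H$ is, and that $\P_G \cap i(K) = i(\P_K)$ since elements of $p^{-1}(\P_H)$ have nontrivial image in $H$ — is exactly the standard argument, with both directions handled correctly.
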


A subset $S$ of a left-ordered group $(G, <)$ is said to be {\em convex} 
if for all $s, s' \in S$ and $g \in G$ satisfying $s < g < s'$, we have $g \in S$.

\begin{prop}\label{convex kernel} If  $1 \rightarrow K  \stackrel{i}{\rightarrow} G \stackrel{p}{\rightarrow} H \rightarrow 1$ is an exact sequence of groups 
with $(G, <)$ a left-ordered group and $K$ a convex subgroup of  $(G, <)$, then there is a left-ordering $\prec$ of $H$ in which
$1 \prec h$ if and only if some (and hence every) element $g \in p^{-1}(h)$ satisfies $1 < g$.  If $(G, <)$ is a bi-ordered group, so is $(H, \prec)$.
\end{prop}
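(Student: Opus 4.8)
The plan is to produce an explicit positive cone for $H$ and then appeal to Proposition~\ref{positive cone} (for the left-orderable case) and Proposition~\ref{prop_BO-criterion} (for the bi-orderable case). Guided by the statement itself, for $h \in H$ with $h \neq 1$ I would pick any $g \in p^{-1}(h)$ and declare $h$ to be ``positive'' precisely when $1 < g$; the candidate positive cone is then $\P_H := \{ h \neq 1 : 1 < g \text{ for } g \in p^{-1}(h)\}$. Everything hinges on checking that this is well defined, i.e., that whether $1 < g$ holds does not depend on the choice of representative $g$ in the coset $p^{-1}(h)$.

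This well-definedness is the main obstacle, and it is exactly where convexity of $K$ enters. Suppose $h \neq 1$ and $g_0, g \in p^{-1}(h)$, so that $g = g_0 k$ with $k := g_0^{-1} g \in K = \ker p$. If one had $1 < g_0$ but $g < 1$ (note $g \neq 1$ because $h \neq 1$), then $g_0 k < 1 < g_0$, and left-multiplying by $g_0^{-1}$ yields $k < g_0^{-1} < 1$ with $k, 1 \in K$. Convexity of $K$ would then force $g_0^{-1} \in K$, hence $g_0 \in \ker p$ and $h = 1$, a contradiction. By the symmetric argument the sign of $g$ is the same for all $g \in p^{-1}(h)$, which simultaneously establishes the ``some (and hence every)'' clause in the statement.

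With well-definedness in hand, the remaining verifications are routine transfers of the corresponding facts in $(G,<)$. For trichotomy, given $h \neq 1$ choose $g \in p^{-1}(h)$ (so $g \notin K$, in particular $g \neq 1$); exactly one of $1 < g$ or $1 < g^{-1}$ holds in $G$, giving exactly one of $h \in \P_H$ or $h^{-1} \in \P_H$, while $h = 1$ is excluded. For closure under multiplication, if $h_1, h_2 \in \P_H$ with positive representatives $g_1, g_2$, then $g_1 g_2 \in p^{-1}(h_1 h_2)$ satisfies $1 < g_1 g_2$ (by left-invariance) and $h_1 h_2 \neq 1$ (else $h_2 = h_1^{-1}$, contradicting trichotomy), so $h_1 h_2 \in \P_H$. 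Proposition~\ref{positive cone} then yields the left-ordering $\prec$ with exactly the description asserted. Finally, if $<$ is a bi-ordering then conjugation preserves $<$ in $G$; so for $a \in \P_H$ and $h \in H$, choosing $g \in p^{-1}(h)$ and a positive representative $g_a$ of $a$, the element $g^{-1} g_a g \in p^{-1}(h^{-1} a h)$ is again $> 1$, whence $h^{-1}\P_H h \subseteq \P_H$. Applying the same to $h^{-1}$ gives equality, and Proposition~\ref{prop_BO-criterion} shows that $(H, \prec)$ is bi-ordered.
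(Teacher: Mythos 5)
Your proof is correct. The paper states Proposition~\ref{convex kernel} without proof, as one of several standard facts about ordered groups deferred to the references, and your argument is exactly the standard one: the positive cone $\P_H$ is well defined because convexity of $K$ prevents two representatives of the same nontrivial coset from having opposite signs, and the remaining verifications (trichotomy, closure under multiplication, and conjugation-invariance in the bi-ordered case) transfer directly from $(G,<)$ via Propositions~\ref{positive cone} and~\ref{prop_BO-criterion}.
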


Being bi-orderable is a much stronger property than being left-orderable.  An intermediate property of groups is to be {\em locally-indicable}, which means that every finitely-generated nontrivial subgroup has the infinite cyclic group $\Z$ as a quotient. 

\begin{prop}  For a group, the following implications hold: bi-orderable $\implies$ locally-indicable $\implies$ left-orderable $\implies$ torsion-free.  None of these implications is reversible.
\end{prop}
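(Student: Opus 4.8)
The plan is to treat the three forward implications one at a time and then supply a separate counterexample to each converse. Two of the forward implications are elementary and I would dispatch them quickly; the third, locally-indicable $\implies$ left-orderable, is the deep one and I expect it to be the main obstacle. I would start with the cheapest, left-orderable $\implies$ torsion-free: if $(G,<)$ is left-ordered and $g\neq 1$, then after replacing $g$ by $g^{-1}$ if needed we may assume $1<g$, and left-invariance gives $g<g^2<g^3<\cdots$ by repeated left-multiplication, so $g^n\neq 1$ for all $n\geq 1$ and $g$ has infinite order.

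Next I would prove bi-orderable $\implies$ locally-indicable. Since a subgroup of a bi-orderable group inherits a bi-ordering by restriction, it suffices to show that a nontrivial finitely generated bi-ordered group $H$ surjects onto $\Z$. For this I would use convex subgroups: in a bi-ordered group these form a chain under inclusion and are normal. Using finite generation one checks that the union $C$ of all \emph{proper} convex subgroups is again a proper convex normal subgroup, for otherwise the largest convex subgroup containing a fixed finite generating set would be all of $H$. By Proposition~\ref{convex kernel} the quotient $H/C$ is bi-ordered, and by maximality of $C$ it has no proper nontrivial convex subgroup, hence is Archimedean; Hölder's theorem then embeds it order-preservingly in $(\R,+)$. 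Thus $H/C$ is a nontrivial finitely generated torsion-free abelian group, isomorphic to $\Z^m$ with $m\geq 1$, so the composite $H\twoheadrightarrow H/C\twoheadrightarrow\Z$ is the desired surjection.

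The implication locally-indicable $\implies$ left-orderable is the crux and the main obstacle. The clean route is the criterion of Burns and Hale: a group is left-orderable as soon as every nontrivial finitely generated subgroup admits a nontrivial left-orderable quotient. Granting this, local indicability furnishes the quotient $\Z$ for every such subgroup, and left-orderability follows immediately. A from-scratch proof of the Burns--Hale criterion requires a Zorn's-lemma/compactness argument assembling a positive cone (in the sense of Propositions~\ref{positive cone} and \ref{prop_BO-criterion}) over all finite subsets, so in keeping with the expository tone of this section I would state it as a known result, citing \cite{CR15,MR77}, rather than reproduce it.

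Finally, for non-reversibility I would exhibit one group for each arrow, and I expect the constructions (rather than the verifications) to be the delicate part. For torsion-free $\not\Rightarrow$ left-orderable I would cite a torsion-free group that fails the unique-product property, which every left-orderable group enjoys; the Promislow (Hantzsche--Wendt) crystallographic group is such an example (see \cite{CR15,MR77}). For left-orderable $\not\Rightarrow$ locally-indicable, any nontrivial finitely generated perfect left-orderable group works, since a perfect group has no quotient $\Z$; such groups exist by the constructions of Bergman discussed in \cite{CR15}. For locally-indicable $\not\Rightarrow$ bi-orderable, the Klein bottle group $K=\langle a,b\mid aba^{-1}=b^{-1}\rangle\cong\Z\rtimes\Z$ is ideal: it is poly-$\Z$, hence locally indicable (every nontrivial subgroup of a poly-$\Z$ group surjects onto $\Z$ via the top term of a normal $\Z$-series), yet it cannot be bi-ordered because $b$ is conjugate to $b^{-1}$, contradicting the conjugation-invariance of a bi-ordering recorded after Proposition~\ref{prop_BO-criterion}.
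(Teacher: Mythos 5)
The paper does not actually prove this proposition: it appears in Section~\ref{Orderable groups}, which is explicitly an outline of ``well-known facts'' with the reader referred to \cite{CR15,MR77}, so there is no in-paper argument to compare against. Judged on its own, your proof is correct and follows the classical route. The implication left-orderable $\implies$ torsion-free is fine. For bi-orderable $\implies$ locally indicable you give the standard convex-subgroup/H\"older argument; the two facts you use without proof (convex subgroups of a left-ordered group form a chain, and in a bi-ordered group they are normal because conjugation is an order-automorphism and permutes convex subgroups) are elementary, and your finite-generation step is right even if the phrasing ``the largest convex subgroup containing a fixed finite generating set'' is garbled --- what you mean is that each generator lies in some proper convex subgroup, and the largest of these finitely many chain members would contain all generators, hence be all of $H$. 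For locally indicable $\implies$ left-orderable you correctly reduce to the Burns--Hale criterion and, reasonably, cite it rather than reprove it; that is consistent with the expository register of this section. The three counterexamples are the standard ones and all check out: for the Promislow/Hantzsche--Wendt group you should at least remark that ``left-orderable $\implies$ unique product property'' itself needs an argument (it is standard but not one-line); the Klein bottle group computation via $aba^{-1}=b^{-1}$ and conjugation-invariance is exactly the right use of the remark following Proposition~\ref{prop_BO-criterion}. In short: where the paper delegates everything to the references, you have supplied a complete and essentially correct proof, delegating only H\"older's theorem, Burns--Hale, and the existence of Bergman's finitely generated perfect left-orderable groups.
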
 

\begin{prop}[\cite{BRW05}] If $L$ is a knot or link in $S^3$, then its group $\pi_1(S^3 \setminus L)$ is locally indicable and therefore left-orderable.
\end{prop}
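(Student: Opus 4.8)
The plan is to prove local indicability directly, since left-orderability then follows from the chain of implications recorded in the previous Proposition. The strategy is to analyze an arbitrary nontrivial finitely generated subgroup through its associated covering space of the link exterior. First I would dispose of split links: if $L = L_1 \sqcup L_2$ is split, then $\pi_1(S^3 \setminus L) \cong \pi_1(S^3 \setminus L_1) \ast \pi_1(S^3 \setminus L_2)$, and since a free product of locally indicable groups is again locally indicable, an induction reduces the statement to nonsplit links. For a nonsplit nontrivial link the exterior $X = S^3 \setminus \nu(L)$ is a compact orientable irreducible $3$-manifold whose boundary is a nonempty union of tori and whose fundamental group $G = \pi_1(X)$ is infinite; by the sphere theorem $\pi_2(X)=0$, so $X$ is aspherical, a $K(G,1)$, and in particular $G$ is torsion-free. (Trivial/unknotted pieces only contribute free factors, which are harmless.)

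Now fix a nontrivial finitely generated subgroup $H \le G$ and pass to the covering $p : \hat X \to X$ with $p_\ast \pi_1(\hat X) = H$. If $H$ has finite index, $\hat X$ is compact with nonempty torus boundary and the conclusion follows at once from the duality computation below, so assume $H$ has infinite index, making $\hat X$ noncompact. Two features of $\hat X$ drive the argument. First, $\hat X$ is again aspherical, covering the aspherical $X$; being a connected noncompact $3$-manifold it is homotopy equivalent to a $2$-dimensional complex, whence $\mathrm{cd}(H) \le 2$. Second, by Scott's compact core theorem there is a compact orientable submanifold $N \subset \hat X$ with $\pi_1(N) \cong H$, and since $\hat X$ is noncompact while $N$ is a codimension-zero compact core, its boundary $\partial N$ is nonempty.

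With $N$ in hand I would split into two cases according to the topology of $\partial N$. If some component of $\partial N$ has positive genus, then $H_1(\partial N; \Q) \ne 0$, and Poincaré--Lefschetz duality for the pair $(N,\partial N)$ (``half lives, half dies'') shows that the image of $H_1(\partial N;\Q) \to H_1(N;\Q)$ has rank exactly $\tfrac12 \dim_\Q H_1(\partial N;\Q) > 0$; hence $b_1(H) = b_1(N) > 0$ and $H$ surjects onto $\Z$. If instead every component of $\partial N$ is a $2$-sphere, I cap them off with $3$-balls to obtain a closed orientable $3$-manifold $\hat N$ with $\pi_1(\hat N) \cong H$, and apply the prime decomposition theorem. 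Each prime summand has fundamental group that is trivial, infinite cyclic (the $S^2 \times S^1$ summands), finite (spherical summands), or an aspherical and hence $3$-dimensional Poincaré duality group. Since $H$ is torsion-free as a subgroup of $G$, there are no nontrivial finite summands, and since $\mathrm{cd}(H) \le 2$ there can be no $PD_3$ summand. Thus $H$ is a nontrivial free group, which surjects onto $\Z$. In either case $H$ admits $\Z$ as a quotient, proving local indicability.

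The main obstacle is precisely the sphere-boundary case: a priori the compact core of $\hat X$ may carry $2$-sphere boundary components, and one must exclude the possibility that $H$ is the fundamental group of a closed aspherical $3$-manifold. The cohomological-dimension bound $\mathrm{cd}(H) \le 2$, extracted from the noncompactness of $\hat X$, is exactly what rules out a $PD_3$ summand; the other delicate point, handled at the outset, is the reduction across split links, which is what makes irreducibility and hence asphericity of the exterior available in the first place.
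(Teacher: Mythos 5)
The paper does not prove this proposition at all --- it is imported verbatim from the cited reference \cite{BRW05}, so there is no in-paper argument to compare against. Your reconstruction is correct and is essentially the standard proof of that cited result: reduce to nonsplit links via Higman's theorem that free products of locally indicable groups are locally indicable (a classical fact you invoke without proof, but legitimately), use irreducibility and the sphere theorem to get asphericity and hence torsion-freeness and $\mathrm{cd}\le 3$, pass to the cover corresponding to a finitely generated subgroup $H$, take a Scott compact core $N$, and split on the topology of $\partial N$: positive-genus boundary gives $b_1(H)>0$ by half-lives-half-dies, while all-spherical boundary forces $H$ to be free via Kneser--Milnor together with the bound $\mathrm{cd}(H)\le 2$ coming from noncompactness of the cover. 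All the individual steps check out (in particular $\partial N\neq\emptyset$ because a compact codimension-zero core of a connected noncompact manifold cannot be closed, and finite $\pi_1$ of a prime summand is killed by torsion-freeness of $H$), so this is a sound, self-contained proof of a statement the paper simply quotes.
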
 

Certain knot groups and link groups are bi-orderable, and this is a particular focus of this paper.  For example, torus knot groups are not bi-orderable, but the figure-eight knot $4_1$ has bi-orderable group \cite{PR03}, and we shall see that the same is true of the Whitehead link and many links constructed from braids as above.
One of the reasons bi-orderability of knot groups is of interest has to do with surgery and $L$-spaces, which were introduced by Ozsv\'ath and Szab\'o \cite{OS05} and include all 3-manifolds with finite fundamental group.

\begin{thm}[\cite{CR12}]  \label{thm_L-space}
If $K$ is a knot in $S^3$ for which $\pi_1(S^3 \setminus K)$ is bi-orderable, then surgery on $K$ cannot produce an $L$-space.
\end{thm}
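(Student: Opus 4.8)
The plan is to prove the contrapositive in two independent halves joined through the Alexander polynomial $\Delta_K(t)$. On the order-theoretic side I would show that if $G = \pi_1(S^3 \setminus K)$ is bi-orderable, then $\Delta_K(t)$ necessarily has a positive real root; on the Floer-theoretic side I would show that if some surgery on $K$ produces an $L$-space, then $\Delta_K(t)$ has no positive real root. These two conclusions are incompatible, so a bi-orderable knot group admits no $L$-space surgery.

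For the first half, fix a bi-ordering of $G$ with positive cone $\P$, so that $g^{-1}\P g = \P$ for all $g \in G$ by Proposition~\ref{prop_BO-criterion}. The abelianization $G \to \Z = \langle t \rangle$ sends the meridian of $K$ to a generator $t$ and has kernel the commutator subgroup $G' = [G,G]$, the fundamental group of the infinite cyclic cover. Since a bi-ordering is invariant under conjugation, its restriction to $G'$ is a bi-ordering that is preserved by conjugation by $t$. I would then push this structure to the rational Alexander module $A = (G'/G'') \otimes \Q$, a finite-dimensional $\Q$-vector space on which $t$ acts linearly with eigenvalues exactly the roots of $\Delta_K(t)$. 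The key point is that the invariant positive cone descends to a nonzero $t$-invariant convex cone in $A \otimes \R$; a linear automorphism preserving a convex cone has an eigenvector inside the closed cone with positive real eigenvalue (a finite-dimensional Perron--Frobenius / Krein--Rutman argument on the projectivized cone). That eigenvalue is a root of $\Delta_K$, giving the desired positive real root. As a consistency check, the bi-orderable figure-eight knot has $\Delta(t) = t^2 - 3t + 1$ with roots $(3 \pm \sqrt{5})/2 > 0$, whereas torus knots, which are not bi-orderable, have only roots of unity.

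For the second half, I would invoke the Ozsv\'ath--Szab\'o structure theorem for knots admitting an $L$-space surgery: after clearing denominators, $\Delta_K(t)$ is a symmetric Laurent polynomial whose nonzero coefficients are $\pm 1$, strictly alternate in sign, and whose extreme coefficients are $+1$. I would then verify that such a polynomial is strictly positive on $(0,\infty)$: writing $\Delta_K(t) = a_0 + \sum_{i \geq 1} a_i(t^i + t^{-i})$ and using the alternation together with the symmetry, one shows $\Delta_K(t) > 0$ for all $t > 0$, so $\Delta_K$ has no positive real root. Passing to the mirror image handles negative surgery slopes, and $\Delta_K(1) = \pm 1$ rules out $t = 1$ directly.

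The main obstacle is the descent step in the first half: one must confirm that the bi-ordering of $G'$ genuinely induces a nondegenerate $t$-invariant convex cone on $A \otimes \R$, rather than collapsing to something trivial. The subgroup $G''$ need not be convex in $G'$, and the integral Alexander module may carry $\Z$-torsion, so care is needed to extract an honest preserved proper cone on the rational (or real) vector space before the cone-preserving eigenvalue argument applies. I expect this, together with pinning down the exact positivity statement for $L$-space Alexander polynomials, to be where the real work lies; the remaining reductions are formal.
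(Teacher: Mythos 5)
Your global architecture --- deriving from bi-orderability that $\Delta_K$ has a positive real root, and from an $L$-space surgery that it has none --- is exactly the architecture of the cited proof in \cite{CR12}, and your second half (the Ozsv\'ath--Szab\'o coefficient condition forces $\Delta_K(t)>0$ for all $t>0$) is correct. The gap is in your first half, and it sits precisely where you suspect. A bi-ordering of $G'=[G,G]$ invariant under conjugation by the meridian does not in any evident way descend to a proper $t$-invariant cone on $(G'/G'')\otimes\R$: the subgroup $G''$ need not be convex in $G'$, so the image of the positive cone in the quotient can be all of $G'/G''$, and after tensoring away the torsion there is nothing left to feed into a Krein--Rutman argument. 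The statement you are reaching for --- that a bi-orderable knot group forces $\Delta_K$ to have a positive real root, for \emph{arbitrary} knots --- is true but is a genuinely harder, later theorem of T.~Ito, whose proof requires a careful analysis of the metabelian quotient; it is not a formal descent, and as written your step would fail.

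The idea you are missing, and the one \cite{CR12} actually uses, is that the $L$-space hypothesis buys fiberedness for free: by Ozsv\'ath--Szab\'o together with Ghiggini and Ni, a knot admitting an $L$-space surgery is fibered. Then $G\cong F_{2g}\rtimes_\phi\Z$ with $F_{2g}$ the finitely generated free fundamental group of the fiber, Proposition~\ref{HNN} converts bi-orderability of $G$ into a $\phi$-invariant bi-ordering of $F_{2g}$, and Theorem~\ref{Clay-Rolfsen} --- already quoted in this paper --- produces a positive real eigenvalue of $\phi_{ab}$, hence a positive real root of $\Delta_K$, since $\Delta_K$ is the characteristic polynomial of the monodromy acting on the first homology of the fiber. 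This sidesteps the Alexander-module descent entirely. So reorder your argument: extract fiberedness from the Floer-theoretic hypothesis first, and your order-theoretic half then reduces to results the paper already has in hand.
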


We note that this is not true in general for links.  As we shall see in Theorem~\ref{thm_Whitehead},  
the Whitehead link has bi-orderable fundamental group, but one can construct lens spaces by certain surgeries on the link.



Consider a group $K$, an automorphism $\phi: K \to K$ and the semidirect product 
$G = K \rtimes_\phi \Z $.  Recall $G \cong \{(k, t^p)\ |\ k \in K,\ p \in {\Bbb Z}\}$ with multiplication given by 
$$(k_1, t^p)(k_2, t^q) = (k_1t^pk_2t^{-p}, t^{p+q}) = (k_1\phi^p(k_2), t^{p+q}), \quad {\rm where} \quad tkt^{-1} = \phi(k).$$

\begin{prop}\label{HNN}
Suppose $K$ is bi-orderable and $\phi: K \to K$ is an automorphism.  Then $G = K \rtimes_\phi \Z$
is bi-orderable if and only if there exists a bi-ordering of $K$ which is preserved by $\phi$.
\end{prop}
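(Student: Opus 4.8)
The plan is to prove both implications directly from the extension criteria of Section~\ref{Orderable groups}, working with positive cones and the split exact sequence $1 \to K \stackrel{i}{\to} G \stackrel{p}{\to} \Z \to 1$, where $p$ records the $t$-exponent. Throughout I would identify $K$ with $i(K) = \{(k,1)\} \le G$ and set $t = (1,t)$, so that the multiplication rule recalled above gives $tkt^{-1} = \phi(k)$ for $k \in K$.

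For the implication ($\Leftarrow$), I would start from a bi-ordering $<$ of $K$ preserved by $\phi$, with positive cone $\P_K$, order $\Z$ in the standard way (positive cone $\P_\Z$), and form the lexicographic positive cone $\P_G = i(\P_K) \cup p^{-1}(\P_\Z)$ of Proposition~\ref{prop_extension-LO}. By Proposition~\ref{prop_extension} this $\P_G$ is a bi-ordering of $G$ exactly when $g^{-1}\P_K g = \P_K$ for all $g \in G$, so the whole task reduces to verifying this conjugation-invariance. Since $G$ is generated by $K$ together with $t$, I would only check it on generators: conjugation by elements of $K$ preserves $\P_K$ because $<$ is a bi-ordering of $K$ (Proposition~\ref{prop_BO-criterion}), conjugation by $t$ acts as $\phi$ and so preserves $\P_K$ by hypothesis, and conjugation by $t^{-1}$ acts as $\phi^{-1}$, which is again order-preserving since the inverse of an order-preserving automorphism preserves the same ordering. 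This yields bi-orderability of $G$.

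For the implication ($\Rightarrow$), I would begin with a bi-ordering of $G$ with positive cone $\P$ and simply restrict it to $K$, setting $\P_K := \P \cap K$. This restricted cone satisfies the trichotomy of Proposition~\ref{positive cone} on $K$ and is conjugation-invariant under $K$, hence defines a bi-ordering $<$ of $K$. To see that $\phi$ preserves $<$, I would use conjugation-invariance of the bi-ordering of $G$: for $k \in K$ one has $k \in \P \iff tkt^{-1} \in \P$, and $tkt^{-1} = \phi(k)$ lies in $K$, so $k \in \P_K \iff \phi(k) \in \P_K$, i.e. $\phi(\P_K) = \P_K$.

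I expect the argument to be largely bookkeeping with the stated extension propositions, with the single delicate point---and the real content of the statement---being the conjugation-invariance condition of Proposition~\ref{prop_extension}. In the ($\Leftarrow$) direction this is precisely where $\phi$-invariance of the chosen ordering of $K$ is used (and where one must not forget that $\phi^{-1}$ inherits order-preservation), while in the ($\Rightarrow$) direction it is conjugation by $t$ inside the ambient bi-ordering that forces $\phi$ to respect the restricted ordering. The main thing to watch is that genuine bi-orderings (not merely left-orderings) are used everywhere, since the entire argument rests on conjugation invariance, which can fail for one-sided orderings.
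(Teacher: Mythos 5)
Your proposal is correct and follows essentially the same route as the paper: restricting the ambient bi-ordering to $K$ and using conjugation by $t$ for the forward direction, and building the lexicographic (short-$\Z$-first) ordering for the converse, with your positive-cone bookkeeping via Propositions~\ref{prop_extension-LO} and \ref{prop_extension} just making explicit the verification the paper leaves as ``easily checked'' using $t^pkt^{-p}=\phi^p(k)$. The generator reduction is valid since $\{g\in G : g^{-1}\P_K g=\P_K\}$ is a subgroup, and your attention to $\phi^{-1}$ and to genuine two-sided invariance is exactly the right point of care.
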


\begin{proof}
Suppose that $G$ is bi-ordered by $<$.   Since bi-orderings are invariant under conjugation, the equation $\phi(k) = tkt^{-1}$ implies that $<$, restricted to $K$, is $\phi$-invariant.
For the converse, suppose $\phi$ preserves a bi-ordering $\prec$ of $K$.  Then we use the lexicographic ordering defined by $(k_1, t^p) < (k_2, t^q)$ if and only if $p < q$ as integers or $p=q$ and $k_1 \prec k_2$, to order $G$.  It is easily checked that this is a bi-ordering using the identity $t^pkt^{-p} = \phi^p(k)$ and the assumption that $\phi$ preserves the ordering $\prec$.
\end{proof}

Let $\Sigma$ be an orientable surface. 
Then  $\pi_1(\Sigma)$ is bi-orderable, see \cite{RW01}. 
Let $\mathrm{Mod}(\Sigma)$  be the mapping class group of $\Sigma$, let $f = [\phi] \in \mathrm{Mod}(\Sigma)$ and assume $\phi(p) = p$ for some $p \in \Sigma$.
Since the fundamental group of the mapping torus ${\Bbb T}_{f}$ of $f$ is 
the semidirect product $\pi_1(\Sigma) \rtimes_{\phi_*} {\Bbb Z}$, 
where $\phi_*: \pi_1(\Sigma, p) \rightarrow \pi_1(\Sigma, p)$ is an automorphism induced from $f$, 
we have the following from Proposition~\ref{HNN}.

\begin{prop}
\label{prop_fibration-HNN}
 $\pi_1({\Bbb T}_{f})$ is bi-orderable if and only if 
there exists a bi-ordering of $\pi_1(\Sigma, p)$ which is preserved by $\phi_*: \pi_1(\Sigma,p) \rightarrow  \pi_1(\Sigma,p)$, 
where $\phi$ is a representative of the mapping class $f$. 
\end{prop}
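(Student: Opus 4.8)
The plan is to recognize this proposition as an immediate specialization of Proposition~\ref{HNN}, so that almost all of the work has already been carried out there; what remains is to fit the topological data into the hypotheses of that proposition and to check that the resulting statement is well-posed.

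First I would record the two structural facts needed to invoke Proposition~\ref{HNN}. Since the representative $\phi$ satisfies $\phi(p) = p$, the induced map $\phi_*: \pi_1(\Sigma, p) \to \pi_1(\Sigma, p)$ is a genuine automorphism of the group $K := \pi_1(\Sigma, p)$, and by \cite{RW01} this $K$ is bi-orderable, being the fundamental group of an orientable surface. Thus the standing hypotheses of Proposition~\ref{HNN}, that $K$ be bi-orderable and that we be given an automorphism of $K$, are met with the automorphism taken to be $\phi_*$.

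Next I would identify $\pi_1(\T_f)$ with the semidirect product $K \rtimes_{\phi_*} \Z$. The mapping torus $\T_f$ is a surface bundle over the circle with fiber $\Sigma$, which yields a split short exact sequence
$$1 \to \pi_1(\Sigma, p) \to \pi_1(\T_f) \to \pi_1(S^1) = \langle t \rangle \to 1,$$
in which conjugation by the generator $t$ acts on the fiber group $\pi_1(\Sigma, p)$ precisely by $\phi_*$. This exhibits $\pi_1(\T_f) \cong K \rtimes_{\phi_*} \Z$ in exactly the form used throughout Section~\ref{Orderable groups}. With these identifications in place, Proposition~\ref{HNN} applies verbatim with $G = \pi_1(\T_f)$, giving that $\pi_1(\T_f)$ is bi-orderable if and only if some bi-ordering of $K$ is preserved by $\phi_*$, which is the asserted equivalence.

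The one point genuinely requiring care, and the place I expect a reader to hesitate, is the phrase \emph{``where $\phi$ is a representative''}: distinct representatives of $f$ fixing $p$ induce maps $\phi_*$ that differ by an inner automorphism $\iota_g$ of $K$, so I must confirm that the right-hand condition does not depend on this choice. I would settle this using the remark following Proposition~\ref{prop_BO-criterion}, that an automorphism preserves a bi-ordering if and only if it fixes the corresponding positive cone $\P$, together with the conjugation-invariance $g^{-1}\P g = \P$ of any bi-ordering. These combine to give $(\iota_g \circ \phi_*)(\P) = g\,\phi_*(\P)\,g^{-1} = \phi_*(\P)$, so $\iota_g \circ \phi_*$ and $\phi_*$ preserve exactly the same bi-orderings; hence the condition is well-posed, consistent with the fact that $\pi_1(\T_f)$ depends only on the mapping class $f$ and not on the chosen representative.
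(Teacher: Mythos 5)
Your proof is correct and follows exactly the paper's route: the paper likewise derives this proposition immediately from Proposition~\ref{HNN} by identifying $\pi_1(\T_f)$ with $\pi_1(\Sigma,p)\rtimes_{\phi_*}\Z$ and citing \cite{RW01} for bi-orderability of the surface group. Your closing remark on independence of the choice of representative is a welcome extra check (the paper addresses this ambiguity separately, in the spirit of Corollary~\ref{cor_well-defined}), but it does not change the argument.
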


If $\phi(x)= x$ for $x \in G$, then 
we say that the orbit of $x$ (under $\phi$) is {\it trivial}.

\begin{prop}\label{finite orbit}
Let $G$ be a left-orderable group. 
If an automorphism $\phi: G \to G$ preserves a left-ordering $<$ of $G$, then $\phi$ cannot have any nontrivial finite orbits.
\end{prop}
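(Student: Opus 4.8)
The plan is to argue by contradiction, exploiting the fact that an order-preserving automorphism carries strict inequalities to strict inequalities, so that the periodicity of a finite orbit forces a strictly increasing chain of group elements to close up on itself.

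First I would suppose that $\phi$ has a nontrivial finite orbit. By the definition immediately preceding the statement, an orbit is trivial precisely when its representative is a fixed point, so a nontrivial finite orbit provides an element $x \in G$ with $\phi(x) \neq x$ and $\phi^n(x) = x$ for some integer $n \geq 2$ (take $n$ to be the length of the orbit, i.e. the least positive period). Since $<$ is a strict total ordering and $\phi(x) \neq x$, exactly one of $x < \phi(x)$ or $\phi(x) < x$ holds; the two cases are symmetric, so I assume $x < \phi(x)$.

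Because $\phi$ preserves $<$, applying $\phi$ to this inequality yields $\phi(x) < \phi^2(x)$, and iterating gives $\phi^k(x) < \phi^{k+1}(x)$ for every $k \geq 0$. By transitivity,
$$ x < \phi(x) < \phi^2(x) < \cdots < \phi^n(x). $$
Since $\phi^n(x) = x$, the chain closes to give $x < x$, contradicting the strictness (irreflexivity) of the ordering.

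There is no real obstacle here: the argument uses only left-orderability (strict totality, with left-invariance entering solely through the hypothesis that $\phi$ preserves the ordering, equivalently $\phi(\P) = \P$) together with the elementary observation that a periodic orbit of an order-preserving map cannot be strictly monotone unless it reduces to a single point. The only care required is to observe that reversing the inequality in the symmetric case $\phi(x) < x$ produces the chain $\phi^n(x) < \cdots < \phi(x) < x$, again collapsing to $x < x$, so that case is handled identically.
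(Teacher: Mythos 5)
Your argument is correct and is essentially the paper's own proof: both assume $x < \phi(x)$ without loss of generality, iterate $\phi$ to obtain $x < \phi^m(x)$ for all $m \ge 1$, and conclude (you via the direct contradiction $x < x$ when $\phi^n(x) = x$, the paper by noting the orbit must be infinite) that no nontrivial finite orbit can exist. The two phrasings are logically interchangeable, so there is nothing to add.
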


\begin{proof}
If the orbit of $x \in G$ is nontrivial, we may assume $x < \phi(x)$.  Then $\phi(x) < \phi^2(x)$, and by transitivity and induction $x < \phi^m(x)$ for all $m \ge 1$ and 
therefore the orbit of $x$, $\{\phi^n(x)\ |\ n \in \Z\}$, is infinite.
\end{proof}

\noindent
Proposition~\ref{finite orbit} says that 
if $\phi: G \rightarrow G$ has a nontrivial finite orbit, then $\phi$ does not preserve any left-ordering  of $G$. 

%
%

Consider an automorphism $\phi : F_n \to F_n$ of a free group. 
The following two criteria for $\phi$ being order-preserving (or not) will be useful; they involve the abelianization $\phi_{ab}: \Z^n \to \Z^n$ and its eigenvalues, which are, {\em a priori},
$n$ complex numbers, possibly with multiplicity.

\begin{thm}[\cite{PR03}]  \label{Perron-Rolfsen}
Let $\phi : F_n \to F_n$ be an automorphism. 
If every eigenvalue of $\phi_{ab}: \Z^n \to \Z^n$ is real and positive, then there is a bi-ordering of $F_n$ which is $\phi$-invariant.
\end{thm}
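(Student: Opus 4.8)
The plan is to construct a $\phi$-invariant bi-ordering of $F_n$ from the lower central series, reducing the whole problem to an abelian statement in which the positivity of the eigenvalues is used directly. I would set up the series $F_n = \gamma_1 \supset \gamma_2 \supset \gamma_3 \supset \cdots$ with $\gamma_{k+1} = [\gamma_k, F_n]$, and invoke two standard structural facts: free groups are residually nilpotent, so $\bigcap_k \gamma_k = \{1\}$, and each quotient $L_k := \gamma_k/\gamma_{k+1}$ is free abelian of finite rank, with $L_1 = \Z^n$ the abelianization. Because each $\gamma_k$ is characteristic, $\phi$ induces automorphisms $\bar\phi_k : L_k \to L_k$ with $\bar\phi_1 = \phi_{ab}$. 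The eventual ordering of $F_n$ will be the \emph{leading-term} order: for $g \ne 1$, residual nilpotence gives a unique $k$ with $g \in \gamma_k \setminus \gamma_{k+1}$, and I declare $g > 1$ precisely when the image $\bar g \in L_k$ is positive in a chosen ordering of $L_k$. This is the lower-central-series bi-ordering of free groups described in Appendix~\ref{Ordering free groups}.

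The first substantive step is to control the eigenvalues of $\bar\phi_k$. The associated graded object $\bigoplus_k L_k \otimes \Q$ is the free Lie algebra over $\Q$ on $L_1 \otimes \Q = \Q^n$, and its degree-$k$ piece $L_k \otimes \Q$ is a $\bar\phi_1$-invariant subquotient of the tensor power $(\Q^n)^{\otimes k}$, on which $\phi$ acts as $\bar\phi_1^{\otimes k}$. The eigenvalues of $\bar\phi_1^{\otimes k}$ are the $k$-fold products $\lambda_{i_1}\cdots\lambda_{i_k}$ of eigenvalues of $\phi_{ab}$, so the eigenvalues of $\bar\phi_k$ form a subset of these products. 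Under the hypothesis that every eigenvalue of $\phi_{ab}$ is real and positive, every such product is real and positive, hence each $\bar\phi_k$ has all eigenvalues real and positive as well.

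The key reduction is then the following abelian lemma: if $A$ is a free abelian group of finite rank and $\psi : A \to A$ is an automorphism all of whose eigenvalues are real and positive, then $A$ admits a bi-ordering preserved by $\psi$. To prove it I would embed $A \hookrightarrow A \otimes \R \cong \R^m$ and upper-triangularize $\psi$ over $\R$, which is possible exactly because all eigenvalues are real; the diagonal entries are then the eigenvalues $\mu_1, \dots, \mu_m > 0$. Ordering $\R^m$ lexicographically from the highest-index coordinate downward in this triangularizing basis, one checks that for a vector whose top nonzero coordinate sits at index $k$ with value $a_k$, the top nonzero coordinate of $\psi(v)$ is $\mu_k a_k$, so it has the same sign as $a_k$; thus $\psi$ preserves this order, and its restriction to $A \subset \R^m$ is the desired $\psi$-invariant bi-ordering. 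Applying this to each $\bar\phi_k : L_k \to L_k$ equips every $L_k$ with a $\bar\phi_k$-invariant bi-ordering.

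Finally I would assemble these into the leading-term order on $F_n$ and verify it is a bi-ordering preserved by $\phi$. The verification that the construction yields a genuine bi-order is the main obstacle: one must confirm trichotomy (from residual nilpotence and totality of each $L_k$-order) and, more delicately, two-sided invariance under multiplication. The latter rests on the central-filtration property $[\gamma_k, \gamma_\ell] \subseteq \gamma_{k+\ell}$, which forces the leading term of a product $gh$ either to coincide with that of the lower of the two factors, or, when $g, h \in \gamma_k \setminus \gamma_{k+1}$, to be $\bar g + \bar h \in L_k$ (or, if these cancel, to lie deeper in the series); the same centrality makes conjugation act trivially on leading terms, giving right-invariance. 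Once the order is established, $\phi$-invariance is immediate: since $\phi$ preserves each $\gamma_k$, an element $g \in \gamma_k \setminus \gamma_{k+1}$ with $\bar g > 0$ maps to $\phi(g) \in \gamma_k \setminus \gamma_{k+1}$ with leading term $\bar\phi_k(\bar g) > 0$, so $\phi$ carries the positive cone into itself and $F_n$ carries a $\phi$-invariant bi-ordering.
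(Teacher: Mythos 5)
Your proposal is correct and is essentially the argument of Perron--Rolfsen that the paper cites for this theorem: order each lower central quotient $L_k$ by triangularizing the induced map (whose eigenvalues are products of eigenvalues of $\phi_{ab}$, hence real and positive), and assemble a $\phi$-invariant standard ordering by leading terms, exactly the framework set up in Appendix~\ref{Ordering free groups}. The paper itself only works out the special case $\phi_{ab}=id$ (Lemma~\ref{lem_identity-map} and Proposition~\ref{auto identity}) and defers the general statement to \cite{PR03}, so your write-up supplies the same proof in full.
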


\begin{thm}[\cite{CR12}] \label{Clay-Rolfsen}
If there exists a bi-ordering of $F_n$ which is $\phi$-invariant, then $\phi_{ab}$ has at least one real and positive eigenvalue. 
\end{thm}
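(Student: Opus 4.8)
The plan is to pass to the real vector space $V = \Z^n \otimes \R \cong \R^n$, on which $M := \phi_{ab}$ acts as an invertible linear map (an element of $\mathrm{GL}_n(\Z) \subseteq \mathrm{GL}_n(\R)$), and to extract from the given $\phi$-invariant bi-ordering an $M$-invariant \emph{proper} closed convex cone. Once such a cone is in hand, a classical cone version of the Perron--Frobenius theorem applies: an invertible linear map preserving a pointed, generating closed convex cone $K \subseteq \R^d$ has an eigenvector in $K$ with a positive real eigenvalue. This last fact I would prove by choosing a linear functional $\ell$ strictly positive on $K \setminus \{0\}$, forming the compact convex base $B = K \cap \{\ell = 1\}$, and applying the Brouwer fixed point theorem to the induced continuous self-map $v \mapsto Mv/\ell(Mv)$ of $B$; the resulting fixed ray is an eigenvector with eigenvalue $\ell(Mv) > 0$.

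To build the cone, let $\P$ be the positive cone of the $\phi$-invariant bi-ordering, so $\phi(\P) = \P$, and let $a \colon F_n \to \Z^n$ be the abelianization, which intertwines $\phi$ with $M$, i.e. $a \circ \phi = M \circ a$. Set $S := a(\P) \subseteq \Z^n$. Since $\P$ is a sub-semigroup, so is $S$; since $\phi(\P) = \P$ we have $MS = a(\phi(\P)) = a(\P) = S$, so $S$ is $M$-invariant; and since every nontrivial element of $F_n$ lies in $\P$ or in $\P^{-1}$, we get $S \cup (-S) \cup \{0\} = \Z^n$. Let $K \subseteq \R^n$ be the closed convex cone generated by $S$. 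Then $K$ is a closed convex cone, it is $M$-invariant (as $M$ is a linear homeomorphism with $MS = S$), and it is generating, since $S \cup (-S)$ already contains a basis of $\Z^n$ and hence $K$ spans $\R^n$. The one property that may fail is pointedness: \emph{a priori} a direction $v$ and its opposite $-v$ can both be images of positive elements (the fibre $a^{-1}(v)$ may contain both positive and negative elements), so $K$ need not be proper.

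The heart of the argument, and the step I expect to be the main obstacle, is to show that $K \neq \R^n$. For this I would produce a nonzero homomorphism $\mu \colon F_n \to \R$ that is nonnegative on $\P$. Here finite generation of $F_n$ is essential: the convex subgroups of the bi-ordered group $(F_n, <)$ form a chain, and the union $U$ of all \emph{proper} convex subgroups is again a convex subgroup; were $U = F_n$, the finitely many generators would all lie in a single proper convex subgroup of the chain, forcing it to be all of $F_n$, a contradiction. Hence $U$ is the maximal proper convex subgroup, the quotient $F_n/U$ is an Archimedean bi-ordered group, and by H\"older's theorem it embeds order-preservingly into $(\R, +)$. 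Composing with $F_n \to F_n/U$, which is monotone by Proposition~\ref{convex kernel}, yields a nonzero $\mu \colon F_n \to \R$ with $\mu(\P) \subseteq [0, \infty)$. As $\mu$ kills commutators it factors as $\mu = f \circ a$ for a nonzero linear functional $f$ on $\Z^n$, and $f \geq 0$ on $S$. Thus $K$ is contained in the closed half-space $\{f \geq 0\}$, so $K \neq \R^n$ and the maximal linear subspace $W := K \cap (-K)$ is a proper, $M$-invariant subspace.

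Finally I would quotient by $W$. The induced map $\bar M$ on $\R^n/W$ is invertible (since $M$ preserves $W$) and preserves the image cone $\bar K$, which is now pointed, by maximality of $W$, and generating, in a space of positive dimension since $W \neq \R^n$. Applying the cone Perron--Frobenius statement of the first paragraph to $\bar M$ and $\bar K$ produces an eigenvector with a positive real eigenvalue; as every eigenvalue of $\bar M$ is an eigenvalue of $M = \phi_{ab}$, this yields the desired real positive eigenvalue and completes the proof.
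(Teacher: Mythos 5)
The paper does not prove this statement at all; it is imported from \cite{CR12}, so the only meaningful comparison is with the original Clay--Rolfsen argument. Your proof is correct, and its first three-quarters coincide with theirs: both hinge on the same key lemma, namely that finite generation forces the chain of proper convex subgroups of $(F_n,<)$ to have a maximal element $U$, that the convex jump $U \subsetneq F_n$ makes $F_n/U$ Archimedean, and that H\"older's theorem then supplies a nonzero homomorphism $\mu = f\circ a$ to $\R$ nonnegative on the positive cone. Where you diverge is the endgame. Clay--Rolfsen observe that $U$, being canonically defined, is $\phi$-invariant, so $\phi$ induces an order-preserving automorphism of $F_n/U\hookrightarrow\R$, which by H\"older's theorem acts as multiplication by some real $\lambda>0$; hence $f\circ\phi_{ab}=\lambda f$, and $\lambda$ is an eigenvalue of the transpose of $\phi_{ab}$, hence of $\phi_{ab}$. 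You instead use $f$ only to show your cone $K$ is a proper subset of $\R^n$, and then run a Brouwer/cone Perron--Frobenius argument on the quotient by the lineality space $K\cap(-K)$. Both routes are valid: theirs is shorter because the single functional already carries the eigenvalue equation, while yours avoids having to check that $U$ is $\phi$-invariant at the cost of the cone machinery, whose verifications (closedness and pointedness of the quotient cone, compactness of the base, well-definedness of the normalized map) you handle correctly. One small point you should make explicit in either version: $U$ must be normal in $F_n$ before the quotient group is formed. This is immediate from your own setup --- $gUg^{-1}$ is again convex, the convex subgroups form a chain, and if $gUg^{-1}\neq U$ then one of $gUg^{-1}$, $g^{-1}Ug$ would be a convex subgroup strictly between $U$ and $F_n$, contradicting maximality --- but it is used silently when you write ``the quotient $F_n/U$ is an Archimedean bi-ordered group.''
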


This is useful in showing that certain fibred $3$-manifolds have fundamental groups which are {\em not} bi-orderable.  However, we note that in the case of braids it does not apply in that way, for if $\phi$ is an automorphism of $F_n$ induced by a braid $\b \in B_n$, or more generally a symmetric automorphism, then $\phi_{ab}$ is simply a permutation of the generators of $\Z^n$ and therefore has at least one eigenvalue equal to one.

We note that Theorem \ref{Perron-Rolfsen} cannot have a full converse.  It has been observed in \cite{LRR08} that for $n \ge 3$ there exist automorphisms of $F_n$ which preserve a bi-ordering of $F_n$, but whose eigenvalues are precisely the 
$n$th roots of unity in $\C$.  Such examples appear in the discussion in Section \ref{explicit}. 

In Appendix \ref{Ordering free groups} a certain class of bi-orderings of the free group $F_n$, called {\it standard} orderings is defined, using the lower central series.


\begin{prop}\label{sym not id}
If $\phi: F_n \to F_n$ is a non-pure symmetric   automorphism, then $\phi$ cannot preserve any standard bi-ordering of $F_n$.
\end{prop}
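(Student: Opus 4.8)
The plan is to reduce the whole question to the abelianization $\Z^n$, where a non-pure symmetric automorphism acts as a nontrivial permutation of the basis, and then to invoke Proposition~\ref{finite orbit} to rule out any order-preservation.

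First I would record what symmetry and non-purity give on the abelianization. Since $\phi$ is symmetric, each generator $x_j$ maps to a conjugate of some $x_k$, so the induced map $\phi_{ab}\colon \Z^n \to \Z^n$ sends the standard basis vector $e_j$ to $e_k$; thus $\phi_{ab}$ is the permutation matrix of some $\pi \in S_n$. That $\phi$ is non-pure means $\pi$ is not the identity, so there is an index $j$ with $\pi(j) \ne j$. The orbit of $e_j$ under $\phi_{ab}$ is then the finite set $\{e_j, e_{\pi(j)}, e_{\pi^2(j)}, \dots\}$, namely the image under $e_{(-)}$ of the cycle of $\pi$ through $j$, and it is nontrivial.

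Next I would pass from $F_n$ to this abelianization using the convexity built into a standard ordering. By construction the terms of the lower central series are convex subgroups of any standard bi-ordering; in particular $\gamma_2 = [F_n, F_n]$ is convex, and being characteristic it is $\phi$-invariant. Suppose, for contradiction, that $\phi$ preserves a standard bi-ordering $<$ of $F_n$. Applying Proposition~\ref{convex kernel} to the exact sequence $1 \to \gamma_2 \to F_n \to \Z^n \to 1$ yields an induced bi-ordering $\prec$ of $\Z^n$, and because $\phi$ preserves $<$ and fixes $\gamma_2$ setwise, the induced automorphism $\phi_{ab}$ on the quotient $F_n/\gamma_2 \cong \Z^n$ preserves $\prec$: a positive coset lifts to a positive element, whose $\phi$-image is again positive and projects to the $\phi_{ab}$-image of the coset.

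Finally I would close the loop with Proposition~\ref{finite orbit}: since $\phi_{ab}$ has the nontrivial finite orbit found above, it cannot preserve any left-ordering of $\Z^n$, contradicting the fact that it preserves $\prec$. Hence no standard bi-ordering of $F_n$ is $\phi$-invariant. The one point needing care --- and the main thing I would want to pin down from the appendix's definition --- is the convexity of $\gamma_2$ in a standard ordering; everything else is a short deduction from the two propositions already established.
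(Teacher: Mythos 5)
Your proposal is correct and follows essentially the same route as the paper's proof: convexity of $[F_n,F_n]$ in a standard ordering (Proposition~\ref{LC convex}), descent of the ordering to $\Z^n$ via Proposition~\ref{convex kernel}, and the nontrivial finite orbit of a basis vector under the permutation $\phi_{ab}$ contradicting Proposition~\ref{finite orbit}. The one point you flag as needing care is exactly the content of Proposition~\ref{LC convex} in the appendix, so nothing is missing.
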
 


\begin{proof}
Assume $\phi: F_n \rightarrow F_n$ is not pure, but preserves a standard bi-ordering $<$ of $F_n$. 
Then $\phi_{ab}$  is a nontrivial permutation of the generators of the abelianization $\Z^n$.  
By Proposition
\ref{LC convex}, the commutator subgroup $[F_n, F_n]$ is convex relative to $<$, and therefore $F_n/[F_n, F_n] \cong \Z^n$ inherits a bi-ordering $<_1$ according to Proposition \ref{convex kernel}.  Since $\phi$ preserves $<$, one easily checks that $\phi_{ab}$ 
preserves the order $<_1$ of $\Z^n$.  But, $\phi$ being non-pure symmetric implies that $\phi_{ab}$ is a nontrivial permutation of the generators of $\Z^n$ which are the images of the $x_i$.  By Proposition \ref{finite orbit}, $\phi_{ab}$ cannot preserve any ordering of $\Z^n$, a contradiction.
\end{proof}

\begin{prop}
\label{prop_pure-symmetric}
If $\phi: F_n \rightarrow F_n$ is a pure symmetric automorphism, 
then $\phi$ is order-preserving.  In fact, it preserves every standard ordering of $F_n$.
\end{prop}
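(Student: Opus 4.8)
The plan is to show that a pure symmetric automorphism acts trivially on every quotient of the lower central series, and then to note that a standard ordering is built entirely from these quotients, so triviality there forces invariance of the ordering. Theorem~\ref{Perron-Rolfsen} already yields \emph{one} $\phi$-invariant bi-ordering, since $\phi(x_i)$ being a conjugate of $x_i$ makes $\phi_{ab}$ the identity on $\Z^n$, with all eigenvalues equal to $1$; but the assertion that \emph{every} standard ordering is preserved is stronger and calls for a direct argument.

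First I would pass to the associated graded Lie ring. Write $\gamma_1 = F_n \supseteq \gamma_2 = [F_n,F_n] \supseteq \gamma_3 \supseteq \cdots$ for the lower central series and set $\mathrm{gr}(F_n) = \bigoplus_{k \ge 1} \gamma_k/\gamma_{k+1}$, with Lie bracket induced by the group commutator. By the Magnus--Witt theorem this is the free Lie ring over $\Z$ on the degree-one classes of $x_1,\dots,x_n$; the essential structural fact for us is that it is generated as a Lie ring in degree one. Since $\phi$ is pure symmetric, $\phi(x_i) = w_i x_i w_i^{-1}$ for some $w_i \in F_n$, so the degree-one part of the induced map is the identity.

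Next I would upgrade this to all degrees. Every automorphism preserves each $\gamma_k$, hence induces maps $\gamma_k/\gamma_{k+1} \to \gamma_k/\gamma_{k+1}$ that assemble into a graded endomorphism $\mathrm{gr}(\phi)$ of $\mathrm{gr}(F_n)$; it respects the bracket because $\phi[\tilde a,\tilde b] = [\phi\tilde a, \phi\tilde b]$ on lifts. A graded Lie-ring endomorphism that restricts to the identity on a generating set (here, degree one) must be the identity throughout. Therefore $\phi$ induces the identity on every quotient $\gamma_k/\gamma_{k+1}$.

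Finally I would unwind the definition of a standard ordering from Appendix~\ref{Ordering free groups}. For $g \ne 1$ there is a unique $k$ with $g \in \gamma_k \setminus \gamma_{k+1}$, and whether $g$ is positive depends only on its image in the ordered abelian group $\gamma_k/\gamma_{k+1}$. Because $\phi$ preserves the filtration and fixes that image, $\phi(g)$ lies in the same $\gamma_k \setminus \gamma_{k+1}$ with identical leading term, so $1 < g \iff 1 < \phi(g)$. Thus $\phi(\P) = \P$ for the positive cone $\P$ of any standard ordering, which by the remark following Proposition~\ref{prop_BO-criterion} means $\phi$ preserves it; as the orderings chosen on the successive quotients were arbitrary, $\phi$ preserves every standard ordering. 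The main obstacle is the second step: one must verify that $\mathrm{gr}(\phi)$ is a genuine graded Lie-ring map and invoke the Magnus--Witt fact that $\mathrm{gr}(F_n)$ is generated in degree one, which is precisely what converts ``trivial on the abelianization'' into ``trivial on all lower central quotients.''
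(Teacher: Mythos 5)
Your proof is correct, and it follows the same overall architecture as the paper: reduce the claim to showing that $\phi$ acts as the identity on every lower central quotient $\gamma_k F_n/\gamma_{k+1}F_n$, starting from the observation that a pure symmetric automorphism has $\phi_{ab}=\mathrm{id}$, and then observe that a standard ordering is determined entirely by the chosen orderings of those quotients. The paper's own proof is a one-liner that delegates exactly this to Proposition~\ref{auto identity}, which in turn rests on Lemma~\ref{lem_identity-map}. Where you genuinely diverge is in how that key lemma is established: the paper (following a suggestion of Koberda) runs a hands-on induction, writing $\phi(g)=gc$ with $c\in\gamma_2$ and $\phi(h)=hd$ with $d\in\gamma_{k+1}$ and manipulating the commutator $[gc,hd]$ modulo $\gamma_{k+2}$ using $[\gamma_m G,\gamma_n G]\subset\gamma_{m+n}G$, whereas you package the same content structurally: $\mathrm{gr}(\phi)$ is a graded Lie-ring endomorphism that is the identity in degree one, and $\mathrm{gr}(F_n)$ is generated in degree one, so $\mathrm{gr}(\phi)=\mathrm{id}$. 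Your route is cleaner and makes the mechanism transparent; the paper's is more self-contained and elementary. One small remark: you do not actually need the full Magnus--Witt theorem (freeness of the associated graded Lie ring) --- only the fact that it is generated in degree one, which holds for \emph{any} group since $\gamma_{k+1}G=[G,\gamma_k G]$ by definition; invoking freeness is harmless but stronger than necessary.
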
 

\begin{proof}
A pure symmetric automorphism $\phi: F_n \rightarrow F_n$  induces the identity map 
$\phi_{ab} = id: {\Bbb Z}^n \rightarrow {\Bbb Z}^n$, so Proposition \ref{auto identity} applies.
\end{proof}

\noindent
For any symmetric automorphism $\phi: F_n \rightarrow F_n$, 
there exists  $k \ge 1$ so that $\phi^k$ is pure symmetric. 
By Proposition~\ref{prop_pure-symmetric}, we have the following. 

\begin{cor}
\label{cor_power}
If $\phi: F_n \rightarrow F_n$ is a symmetric automorphism, 
then there exists  $k \ge 1$ such that 
$\phi^k$ is order-preserving. 
\end{cor}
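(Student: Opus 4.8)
The plan is to exploit the finite-order permutation data carried by any symmetric automorphism. First I would attach to $\phi$ the permutation $\pi$ of $\{1, \dots, n\}$ determined by the requirement that $\phi(x_j)$ be a conjugate of $x_{\pi(j)}$. This $\pi$ is well-defined: the $x_1, \dots, x_n$ lie in distinct conjugacy classes of $F_n$, so the index $\pi(j)$ is unambiguous, and equivalently $\pi$ is recorded by the permutation matrix $\phi_{ab}$ acting on the abelianization $\Z^n$, as noted in the discussion following Theorem~\ref{Clay-Rolfsen}. Since $\phi_{ab}$ is a genuine (unsigned) permutation matrix, $\pi$ is an honest permutation and in particular has finite order.

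Next I would verify that symmetric automorphisms are closed under composition and that the associated permutations multiply accordingly. Writing $\phi(x_j) = w_j x_{\pi(j)} w_j^{-1}$ for suitable words $w_j \in F_n$, a direct substitution gives
$$\phi^2(x_j) = \left(\phi(w_j)\, w_{\pi(j)}\right) x_{\pi^2(j)} \left(\phi(w_j)\, w_{\pi(j)}\right)^{-1},$$
so $\phi^2$ is again symmetric with associated permutation $\pi^2$; by an easy induction $\phi^m$ is symmetric with permutation $\pi^m$ for every $m \ge 1$. This short bookkeeping computation is the only place any real verification is required, and I expect it to be the main (though very modest) point to get right, since one must keep track of how the conjugating words transform.

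Finally I would take $k$ to be the order of $\pi$ in the symmetric group on $n$ letters, which is finite. Then $\phi^k$ is symmetric with permutation $\pi^k = \mathrm{id}$, meaning $\phi^k$ sends each $x_j$ to a conjugate of itself; that is, $\phi^k$ is \emph{pure} symmetric. Proposition~\ref{prop_pure-symmetric} then guarantees that every pure symmetric automorphism of $F_n$ is order-preserving (indeed preserves every standard ordering), so $\phi^k$ is order-preserving, as desired. The genuine content of the corollary thus resides entirely in Proposition~\ref{prop_pure-symmetric}; the statement itself needs only the observation that the permutation part of a symmetric automorphism has finite order and can be annihilated by passing to a suitable power.
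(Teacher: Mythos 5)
Your argument is correct and is exactly the paper's route: the paper simply remarks that for any symmetric automorphism some power is pure symmetric and then invokes Proposition~\ref{prop_pure-symmetric}, which is precisely what you do, with the (sound) bookkeeping about the induced permutation and its order written out explicitly. No gaps; your write-up just supplies details the paper leaves implicit.
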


\section{Order-preserving braids}
\label{section_OPbraids}

\subsection{Basic properties} 
\label{subsection_Basic}

From Lemma \ref{lemma} and Proposition~\ref{prop_fibration-HNN} we have the following.

\begin{prop}\label{prop_OPiffBi-ord}
A braid $\b \in B_n$ is order-preserving if and only if $\pi_1(S^3 \setminus \mathrm{br}(\b))$ is bi-orderable.
\end{prop}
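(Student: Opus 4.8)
The plan is to chain together the structural results already established and reduce the statement to the definition of ``order-preserving.'' First I would use Lemma~\ref{lemma}, which identifies $S^3 \setminus \mathrm{br}(\b)$ with $Int(\T_\b)$, so that their fundamental groups agree; combined with the description already recorded in Section~\ref{Braids and $Aut(F_n)$}, this gives $\pi_1(S^3 \setminus \mathrm{br}(\b)) \cong \pi_1(\T_\b) \cong F_n \rtimes_\b \Z$. Thus the question of bi-orderability of the link group becomes the question of bi-orderability of this semidirect product.

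Next I would invoke the general criterion for semidirect products. Since $D_n$ is an orientable surface, $\pi_1(D_n) \cong F_n$ is bi-orderable (free groups are bi-orderable, as noted in the introduction and developed in Appendix~\ref{Ordering free groups}). Taking $K = F_n$ and letting $\phi$ be the braid automorphism $x \mapsto x^\b$, Proposition~\ref{prop_fibration-HNN} (equivalently Proposition~\ref{HNN}) asserts that $\pi_1(\T_\b) = F_n \rtimes_\b \Z$ is bi-orderable if and only if there exists a bi-ordering of $F_n$ preserved by $\phi$. This last condition is precisely the definition of $\b$ being order-preserving, which yields the desired equivalence.

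The only real subtlety, and the step I would treat most carefully, is that Proposition~\ref{prop_fibration-HNN} is stated for a mapping class with a fixed basepoint $p$ and a specific induced automorphism $\phi_*$, whereas the braid automorphism is well defined only up to the choices of representative, basepoint, and generating set discussed in the Basepoints subsection. Here I would appeal to Corollary~\ref{cor_well-defined}: since the ambiguity amounts to conjugation in $\pi_1(D_n)$ (including conjugation by a power of $\Delta_n^2$ when passing between $\mathrm{Mod}(D_n,\partial D)$ and $\mathrm{Mod}(D_n)$), and bi-orderings are conjugation-invariant, the existence of a $\phi$-invariant bi-ordering of $F_n$ does not depend on these choices. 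Hence ``order-preserving'' is an unambiguous property of $\b$, and the equivalence furnished by Proposition~\ref{prop_fibration-HNN} is independent of the chosen representative. With this observation in place the proof is essentially a matter of citing Lemma~\ref{lemma} and Proposition~\ref{prop_fibration-HNN} in sequence, and no computation is required.
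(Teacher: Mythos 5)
Your proposal is correct and follows exactly the paper's route: the paper derives the proposition immediately from Lemma~\ref{lemma} and Proposition~\ref{prop_fibration-HNN}, with the basepoint/representative ambiguity already disposed of via Corollary~\ref{cor_well-defined} in the preliminaries. Your extra care about that ambiguity is a sound elaboration of what the paper leaves implicit, not a different argument.
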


Let $\delta_n$ be the $n$-strand braid 
$$\delta_n=\sigma_1 \sigma_2 \cdots \sigma_{n-1} $$ and  
let $\Delta = \Delta_n \in B_n$ be the half twist 
$$\Delta = (\s_1\s_2\cdots \s_{n-1})(\s_1\s_2\cdots \s_{n-2}) \cdots (\s_1\s_2)\s_1.$$
The full twist  $\Delta^2$ is written by 
$\Delta^2 = \delta_n^n= (\delta_n \sigma_1)^{n-1}$, 
which means that $\delta_n$ and $\delta_n \sigma_1$ are roots of $\Delta^2$. 
We note that $\Delta^2 $ commutes with all $n$-strand braids and in fact generates the centre of $B_n$ when $n \ge 3$, 
see \cite[Theorem~1.24]{KT08}.

\begin{cor}\label{cor_multDelta^2}
A braid $\b \in B_n$ is order-preserving if and only if $\b\Delta^{2k}$ is order-preserving for some (hence all) $k \in \Z.$  Moreover, they preserve exactly the same bi-orderings of $F_n$.
\end{cor}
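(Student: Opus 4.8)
The plan is to exploit the fact established earlier in the excerpt that $\Delta^2$ generates the centre of $B_n$, so that multiplying a braid by $\Delta^{2k}$ changes its Artin automorphism of $F_n$ only by the inner automorphism $x \mapsto c^k x c^{-k}$, where $c = x_1 x_2 \cdots x_n$. The heart of the argument is therefore to show that composing a symmetric automorphism $\phi$ of $F_n$ with an inner automorphism does not change the set of bi-orderings it preserves. Since ``$\beta$ is order-preserving'' is phrased in terms of the existence of a $\phi$-invariant bi-ordering (equivalently, by Proposition~\ref{prop_BO-criterion}, a positive cone $\P$ with $\phi(\P) = \P$), this reduces the corollary to a clean statement about positive cones.

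First I would record the relation $x^{\b\Delta^{2k}} = c^{-k} x^{\b} c^{k}$ (or its inverse-conjugation analogue, matching the right-action convention used in the excerpt), using the description of the Artin image together with the observation already made in the excerpt that the images of $\b$ and $\b\Delta^{2k}$ differ by conjugation by $c^{\pm k}$. Write $\psi = \phi_{\b}$ for the automorphism induced by $\b$ and $\iota_c$ for conjugation by $c$, so that the automorphism induced by $\b\Delta^{2k}$ is $\iota_c^{\mp k} \circ \psi$ (I would fix the exact sign once the convention is pinned down; it does not affect the argument).

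Next comes the key step: I would show that for any bi-ordering $<$ of $F_n$ with positive cone $\P$, one has $\psi(\P) = \P$ if and only if $(\iota_c^m \circ \psi)(\P) = \P$ for every $m \in \Z$. This is where bi-invariance does the real work. By Proposition~\ref{prop_BO-criterion}, a bi-ordering's positive cone satisfies $g^{-1} \P g = \P$ for all $g$; in particular $\iota_c(\P) = \P$. Hence $(\iota_c^m \circ \psi)(\P) = \iota_c^m(\psi(\P)) = \psi(\P)$, so the image of $\P$ is literally unchanged when we precompose or postcompose $\psi$ with any power of $\iota_c$. Therefore $\psi$ preserves $<$ exactly when $\iota_c^m \circ \psi$ does, and not merely for some ordering but for the very same ordering. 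I expect this to be the main obstacle only insofar as one must be careful that the conjugation is by an element of $F_n$ itself (namely $c$), so that the conjugation-invariance clause of bi-orderings applies directly; no appeal to invariance under elements outside $F_n$ is needed.

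Finally I would assemble the pieces. Taking $m = \mp k$, the displayed equivalence shows that $\b$ and $\b\Delta^{2k}$ induce automorphisms with identical sets of invariant positive cones, hence identical sets of preserved bi-orderings of $F_n$. In particular one is order-preserving if and only if the other is, and ``for some $k$'' is equivalent to ``for all $k$'' since the relation is symmetric in $\b$ and $\b\Delta^{2k}$ (replace $\b$ by $\b\Delta^{2k}$ and $k$ by $-k$). This yields both assertions of Corollary~\ref{cor_multDelta^2} simultaneously, the stronger ``same bi-orderings'' claim being exactly the $\psi(\P) = (\iota_c^m\circ\psi)(\P)$ identity rather than a mere biconditional.
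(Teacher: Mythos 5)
Your proof is correct and coincides with the paper's own ``alternative argument,'' which observes in one line that $\Delta^2$ acts on $F_n$ by an inner automorphism and hence preserves \emph{every} bi-ordering of $F_n$; you have simply spelled out the positive-cone bookkeeping behind that sentence. (The paper's primary argument is instead topological---the $k$th power of the disk twist about the axis gives a homeomorphism $S^3\setminus\mathrm{br}(\b)\cong S^3\setminus\mathrm{br}(\b\Delta^{2k})$, which via Proposition~\ref{prop_OPiffBi-ord} yields the first assertion---but only the conjugation argument delivers the ``same bi-orderings'' clause, so your route is the one that proves the full statement.)
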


\begin{proof}
This follows since $S^3 \setminus \mathrm{br}(\b)$ and $S^3 \setminus \mathrm{br}(\b\Delta^{2k})$ are homeomorphic: 
By using the {\it disk twist} as we shall define in Section~\ref{subsection_DiskTwists}, we see that 
$k$th power of the disk twist $\mathfrak{t}^k$ about the disk bounded by the braid axis $A$ of $\beta$ 
sends the exterior $\mathcal{E}(\mathrm{br}(\beta))$ of the link $\mathrm{br}(\beta)$ to the exterior $\mathcal{E}(\mathrm{br}(\beta \Delta^{2k}))$. 
An alternative argument is that $\Delta^2$ acts by conjugation of $F_n$, so it preserves {\em every} bi-order of $F_n$.
\end{proof}

As noted by Garside \cite{Garside69} every $n$-strand braid has an expression $\b\Delta^{2k}$ where $\b$ is a {\em Garside positive} braid word, meaning that $\b$ can be written as a word in the $\s_i$ generators without negative exponents.  Thus a question of a braid being order-preserving can always be reduced to the case of positive braids.  
Notice that changing a braid $\b$ by conjugation does not change the link $\mathrm{br}(\b)$ up to isotopy, so we have:

\begin{cor}
Let $\alpha, \beta \in B_n$. 
Then $\b$ is order-preserving if and only if $\a\b\a^{-1}$ is order-preserving.
\end{cor}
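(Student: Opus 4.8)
The plan is to exploit Proposition~\ref{prop_OPiffBi-ord}, which translates order-preservability of a braid into bi-orderability of the fundamental group of its associated braided link, together with the observation (already recorded just before the corollary) that conjugate braids yield isotopic braided links. Concretely, I would argue that $\mathrm{br}(\b)$ and $\mathrm{br}(\a\b\a^{-1})$ are isotopic links in $S^3$, so that their complements have isomorphic fundamental groups, and then invoke Proposition~\ref{prop_OPiffBi-ord} at both ends.

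First I would verify the topological claim that conjugate braids produce isotopic braided links. The closure $\widehat{\a\b\a^{-1}}$ is isotopic to $\widehat{\b}$ by the standard Markov-move argument (conjugation is one of the Markov moves, realized by sliding strands around the braid closure), and this isotopy carries the braid axis $A$ to itself since the axis is determined by the solid-torus structure in which the closure sits. Hence $\mathrm{br}(\a\b\a^{-1}) = \widehat{\a\b\a^{-1}} \cup A$ is isotopic to $\mathrm{br}(\b) = \widehat{\b} \cup A$ as links, and therefore $S^3 \setminus \mathrm{br}(\b)$ is homeomorphic to $S^3 \setminus \mathrm{br}(\a\b\a^{-1})$.

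Given the homeomorphism of complements, their fundamental groups are isomorphic, so one is bi-orderable if and only if the other is. Applying Proposition~\ref{prop_OPiffBi-ord} in both directions then yields that $\b$ is order-preserving if and only if $\a\b\a^{-1}$ is, which is exactly the assertion.

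The main obstacle is making the geometric isotopy argument clean, specifically confirming that the isotopy realizing braid conjugation fixes (or can be arranged to fix) the braid axis. An alternative, more algebraic route avoids this entirely: conjugating $\b$ by $\a$ conjugates the induced automorphism $x \mapsto x^{\b}$ of $F_n$ by the automorphism $x \mapsto x^{\a}$, and if a bi-ordering $<$ is $\b$-invariant with positive cone $\P$, then the push-forward ordering with positive cone $\{x^{\a} \mid x \in \P\}$ is $\a\b\a^{-1}$-invariant; conjugation-invariance of bi-orderings ensures this is again a bi-ordering. I expect the algebraic argument to be the safer one to write out, since it sidesteps any subtlety about the axis and follows the same template used in the proof of Corollary~\ref{cor_well-defined}.
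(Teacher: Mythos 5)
Your proposal is correct and follows essentially the same route as the paper, which simply notes that conjugating a braid does not change $\mathrm{br}(\b)$ up to isotopy and then appeals to Proposition~\ref{prop_OPiffBi-ord}; your write-up just makes the isotopy (and the fact that it respects the axis) explicit. The alternative algebraic argument you sketch at the end is also valid, but it is not the one the paper uses here.
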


The very simplest of nontrivial braids are the generators $\s_i $.

\begin{prop}  \label{gen not OP}
The generators $\s_i \in B_n$ are not order-preserving.
\end{prop}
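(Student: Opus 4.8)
The plan is to show that each generator $\s_i$ induces a symmetric automorphism of $F_n$ that is \emph{not pure}, and then invoke the machinery already developed in the excerpt to conclude it cannot preserve any bi-ordering. From the Artin action \eqref{equation_Artin}, $\s_i$ sends $x_i \mapsto x_i x_{i+1} x_i^{-1}$, $x_{i+1}\mapsto x_i$, and fixes all other generators. First I would pass to the abelianization $\s_{i,ab}\colon \Z^n \to \Z^n$: since conjugation is trivial in the abelianization, the image of $x_i$ becomes $\bar{x}_{i+1}$ and the image of $x_{i+1}$ becomes $\bar{x}_i$, while the remaining generators are fixed. Thus $\s_{i,ab}$ is exactly the transposition of the two standard basis vectors $\bar{x}_i$ and $\bar{x}_{i+1}$ of $\Z^n$, a nontrivial permutation of finite (indeed order two) orbit.

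The core of the argument is then a direct application of Proposition~\ref{finite orbit}. The element $\bar{x}_i \in \Z^n$ has the nontrivial finite orbit $\{\bar{x}_i, \bar{x}_{i+1}\}$ under $\s_{i,ab}$, so by Proposition~\ref{finite orbit} the automorphism $\s_{i,ab}$ cannot preserve any left-ordering of $\Z^n$. If $\s_i$ were order-preserving, it would preserve some bi-ordering $<$ of $F_n$; the commutator subgroup $[F_n,F_n]$ is convex with respect to any bi-ordering (as used in the proof of Proposition~\ref{sym not id}), so by Proposition~\ref{convex kernel} the quotient $\Z^n = F_n/[F_n,F_n]$ would inherit a bi-ordering preserved by the induced map $\s_{i,ab}$. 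This contradicts the conclusion just obtained, so no such bi-ordering of $F_n$ can exist, and $\s_i$ is not order-preserving.

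In fact the cleanest route is to observe that $\s_i$ is a non-pure symmetric automorphism — it is symmetric because each generator maps to a conjugate of some $x_k$, and non-pure because $x_{i+1}\mapsto x_i$ is a conjugate of a \emph{different} generator — and then simply quote Proposition~\ref{sym not id}, which already rules out preservation of any \emph{standard} bi-ordering. However, since order-preserving allows \emph{any} bi-ordering (not just standard ones), the self-contained abelianization argument above is the one that genuinely settles the claim, and I would present that one. The main (and only real) obstacle is ensuring the passage to $\Z^n$ is justified, namely that a $\phi$-invariant bi-ordering of $F_n$ descends to a $\phi_{ab}$-invariant bi-ordering of $\Z^n$ via convexity of $[F_n,F_n]$; once that descent is in place, Proposition~\ref{finite orbit} finishes everything immediately.
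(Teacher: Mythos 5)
There is a genuine gap in your main argument, and it is located exactly where you suspected: the descent to $\Z^n$. You assert that $[F_n,F_n]$ is convex with respect to \emph{any} bi-ordering of $F_n$, ``as used in the proof of Proposition~\ref{sym not id}.'' But the paper only proves convexity of the lower central subgroups for \emph{standard} orderings (Proposition~\ref{LC convex}), and Proposition~\ref{sym not id} is correspondingly stated only for standard orderings. Convexity of the commutator subgroup fails for general bi-orderings of $F_n$, so an arbitrary $\s_i$-invariant bi-ordering need not induce any ordering at all on the quotient $\Z^n$, and Proposition~\ref{finite orbit} cannot be applied downstairs. Your argument therefore only re-proves that $\s_i$ preserves no \emph{standard} ordering, which is weaker than the proposition.

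The decisive evidence that this step cannot be repaired is that your argument would prove too much. The braid $\delta_n\sigma_1$ of Theorem~\ref{thm_periodicBraid}(1) induces an automorphism whose abelianization is the nontrivial permutation $x_1 \to x_n \to x_{n-1} \to \cdots \to x_3 \to x_1$, $x_2 \to x_2$, which has nontrivial finite orbits; yet the paper shows (Theorem~\ref{thm_periodicBraid}(1), Section~\ref{explicit}) that $\delta_n\sigma_1$ \emph{is} order-preserving --- precisely because the invariant orderings it preserves are non-standard ones for which $[F_n,F_n]$ is not convex, as the paper notes at the end of Section~\ref{explicit}. The paper's actual proof of Proposition~\ref{gen not OP} avoids abelianization entirely and works directly in $F_n$: assuming $x_i < x_{i+1}$ and applying $\s_i$ gives $x_i x_{i+1} x_i^{-1} < x_i$, and conjugation invariance of a bi-ordering then yields $x_{i+1} < x_i$, a contradiction. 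You should replace your abelianization step with this direct two-line computation.
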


\begin{proof}
Suppose that $\s_i$ preserves a bi-order $<$ of $F_n$.  We may assume $x_i < x_{i+1}$.  Then 
$x_i^{\s_i} < x_{i+1}^{\s_i}$, or in other words $x_ix_{i+1}x_i ^{-1} < x_i$, see (\ref{equation_Artin}).  
But conjugation invariance of the ordering then yields 
$x_{i+1}< x_i$, which is a contradiction. 

An alternative argument is as follows. 
We take a basepoint $p_1$  in the interior of $D_n$. 
There exists a homeomorphism $\phi: D_n \rightarrow D_n$ which represents $\sigma_i \in \mathrm{Mod}(D_n)$ 
such that the induced map 
$\phi_*: \pi_1(D_n, p_1) \rightarrow  \pi_1(D_n, p_1)$ 
gives rise to the following automorphism on $F_n= \langle x_1, \cdots, x_n \rangle$: 
\begin{equation}
\label{equation_sigma}
\sigma_i: x_i \mapsto x_{i+1}, \quad x_{i+1} \mapsto x_{i+1}x_i x_{i+1}^{-1}, \quad x_j \mapsto x_j\ {\rm if}\ 
j \ne i,i+1,
\end{equation}
see Figure~\ref{fig_sigma-i}(2). 
(In the figures, we denote by $x'$, the image of $x \in F_n$ under the automorphism of $F_n$, 
and denote by $\overline{x}$, the inverse $x^{-1}$ of $x$.)
Suppose that $\s_i$ preserves a bi-order $<$ of $F_n$.  
We may assume $x_i < x_{i+1}$. 
Then $x_{i+1} < x_{i+1}x_i x_{i+1}^{-1}$ 
since $\sigma_i$ preserves $<$. 
This implies that 
$x_{i+1}< x_i$ by conjugation invariance of the bi-ordering. 
This is a contradiction.  
\end{proof}

\begin{center}
\begin{figure}
\includegraphics[width=4in]{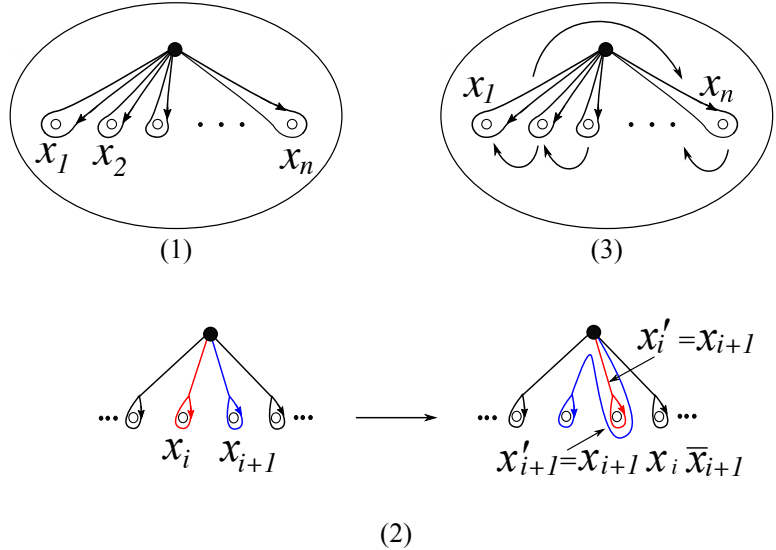}
\caption{A basepoint $\bullet$  of $\pi_1(D_n)$ is taken to be in the interior of $D_n$. 
(1) Generators $x_i$'s of $F_n $. 
(2) $\sigma_i: F_n \rightarrow F_n$.
(3) $\delta_n : x_n \mapsto x_{n-1} \mapsto \cdots \mapsto x_1 \mapsto x_n$.}
\label{fig_sigma-i}
\end{figure}
\end{center}

\begin{ex}
Proposition~\ref{gen not OP} 
together with Corollary \ref{cor_multDelta^2} implies the links of Figure \ref{fig_non-bo_link} have complements whose fundamental groups are not bi-orderable.  
Those complements are homeomorphic to each other.
\end{ex}

\begin{center}
\begin{figure}
\includegraphics[width=4in]{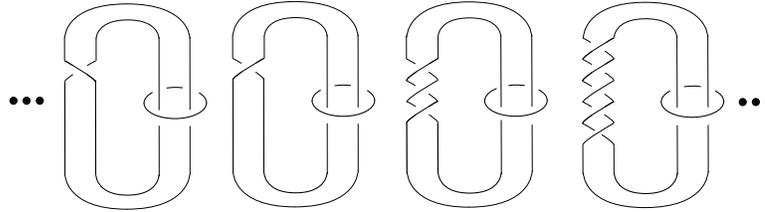}
\caption{Links whose groups are not bi-orderable.}
\label{fig_non-bo_link}
\end{figure}
\end{center}

\subsection{Pure braids} 
\label{subsection_Pure-braids}

As is well-known, there is a homomorphism $B_n \to \mathcal{S}_n$ of the $n$-strand braid group onto the permutation group of $n$ letters, and the kernel is the group of pure braids $P_n$.

\begin{prop} \label{pure OP}
Every pure braid $\b \in P_n$ is order-preserving, and in fact preserves every standard bi-order of $F_n$.
\end{prop}


\begin{proof}
For a pure braid $\beta$, the image of the Artin representatation is pure symmetric. 
This completes the proof by Proposition~\ref{prop_pure-symmetric}. 
\end{proof}


The $2$nd power $\sigma_i^2$ of each generator $\sigma_i$ is a pure braid. 
Thus we have the following.

\begin{cor}
The braids $\s_i^2 \in P_n$ are order-preserving.
\end{cor}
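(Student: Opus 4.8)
The final statement is the Corollary asserting that the braids $\s_i^2 \in P_n$ are order-preserving. The plan is to recognize that this is an immediate specialization of the preceding Proposition~\ref{pure OP}, which states that every pure braid is order-preserving (indeed preserves every standard bi-order of $F_n$). So the entire task reduces to verifying that $\s_i^2$ is a \emph{pure} braid, i.e.\ that it lies in the kernel $P_n$ of the homomorphism $B_n \to \mathcal{S}_n$ onto the symmetric group.

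First I would recall how this homomorphism acts on generators: the generator $\s_i$ maps to the transposition $(i\ i+1)$ exchanging the strands labelled $i$ and $i+1$. This is exactly the underlying permutation of the half-twist $h_i$ interchanging the two punctures, as described in Section~\ref{Braids and $Aut(F_n)$}. Then $\s_i^2$ maps to $(i\ i+1)^2 = \mathrm{id}$, the identity permutation in $\mathcal{S}_n$, since every transposition is an involution. Hence $\s_i^2$ lies in the kernel $P_n$, so it is a pure braid.

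Having established that $\s_i^2 \in P_n$, I would then simply invoke Proposition~\ref{pure OP}: every pure braid is order-preserving (in fact preserves every standard bi-order of $F_n$). Applying this to $\s_i^2$ yields the desired conclusion. For completeness one could alternatively verify purity at the level of the Artin automorphism: computing the square of the action in \eqref{equation_Artin} shows $\s_i^2$ sends $x_i \mapsto (x_ix_{i+1})\,x_i\,(x_ix_{i+1})^{-1}$ and $x_{i+1} \mapsto x_i x_{i+1} x_i^{-1}$, each of which is a conjugate of itself, confirming directly that the induced automorphism is pure symmetric so that Proposition~\ref{prop_pure-symmetric} applies.

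There is essentially no obstacle here: the only content is the elementary observation that the square of a transposition is trivial, after which the hard analytic work has already been done inside Proposition~\ref{pure OP} (which in turn rests on Proposition~\ref{prop_pure-symmetric} and the fact that a pure symmetric automorphism induces the identity on the abelianization). The one point worth stating cleanly is the distinction between $\s_i^2$ being pure as a \emph{braid} and the induced automorphism being pure \emph{symmetric} as an automorphism of $F_n$ — these coincide precisely because, as noted earlier in the excerpt, pure braids induce pure symmetric automorphisms.
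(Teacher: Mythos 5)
Your proposal is correct and follows exactly the paper's route: the paper derives this corollary immediately from the observation that $\s_i^2$ is a pure braid together with Proposition~\ref{pure OP}. Your extra verification that the induced Artin automorphism is pure symmetric is a harmless elaboration of the same argument.
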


\begin{ex}
The 2-strand braid $\s_1^2$ gives rise to the examples of links in Figure \ref{fig_bo_link} whose complements have bi-orderable fundamental groups.  
All the link complements are homeomorphic to one another. 
It is clear that they are homeomorphic with 
$D_2 \times S^1$, whose fundamental group is isomorphic with $F_2 \times \Z$.
\end{ex}

\begin{center}
\begin{figure}
\includegraphics[width=4in]{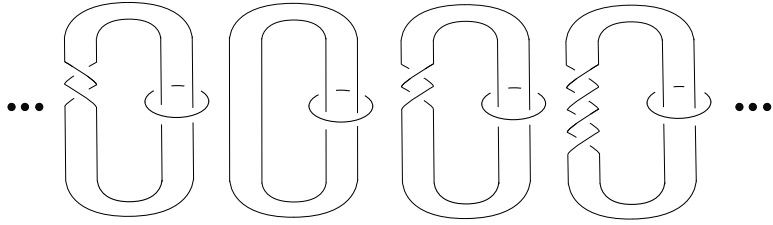}
\caption{Links whose groups are bi-orderable.}
\label{fig_bo_link}
\end{figure}
\end{center}

 By Corollary~\ref{cor_power}, we immediately obtain the following.

\begin{cor}
For every braid $\b$ some power $\b^k$ is order-preserving.  
The fundamental group $\pi_1(\T_{\b^k})$ is bi-orderable and 
may be regarded as a normal subgroup of index $k$ in $\pi_1(\T_{\b})$.
\end{cor}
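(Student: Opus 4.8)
The plan is to derive the three assertions in turn, the first two being essentially immediate consequences of earlier results and the third being a short computation with the semidirect-product structure of $\pi_1(\T_\b)$.

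First I would establish that some power $\b^k$ is order-preserving. The braid $\b$ induces a symmetric automorphism of $F_n$, and the automorphism induced by $\b^k$ is precisely the $k$th power of this automorphism. Corollary~\ref{cor_power} guarantees that some power of a symmetric automorphism is order-preserving, so choosing $k \ge 1$ accordingly makes $\b^k$ order-preserving. Concretely one may take $k$ to be the order of the permutation underlying $\b$, so that $\b^k$ is pure and Proposition~\ref{prop_pure-symmetric} applies directly.

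Next, bi-orderability of $\pi_1(\T_{\b^k})$ follows at once: by Lemma~\ref{lemma} we have $\pi_1(\T_{\b^k}) \cong \pi_1(S^3 \setminus \br(\b^k))$, and Proposition~\ref{prop_OPiffBi-ord} identifies bi-orderability of the latter with $\b^k$ being order-preserving, which we have just arranged. Equivalently, one invokes Proposition~\ref{prop_fibration-HNN} directly with the $\b^k$-invariant bi-ordering of $F_n$.

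The substantive point is the last one. I would exhibit $\pi_1(\T_{\b^k})$ explicitly as a subgroup of $\pi_1(\T_\b)$. Writing $\pi_1(\T_\b) = F_n \rtimes_\b \Z$ with $\Z = \langle t\rangle$ acting by $tgt^{-1} = g^\b$, consider the projection $p: \pi_1(\T_\b) \to \Z$ with kernel $F_n$. The subgroup $H := p^{-1}(k\Z)$ is generated by $F_n$ together with $t^k$, and since $t^k g t^{-k} = g^{\b^k}$, the relations among these generators are exactly those defining $F_n \rtimes_{\b^k}\Z$; thus $H \cong \pi_1(\T_{\b^k})$. As $H$ is the preimage of the normal subgroup $k\Z \trianglelefteq \Z$, it is normal in $\pi_1(\T_\b)$, and its index equals the index of $k\Z$ in $\Z$, namely $k$. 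Geometrically this is simply the observation that $\T_{\b^k}$ is the connected $k$-fold cyclic cover of $\T_\b$ pulled back from the degree-$k$ self-cover of the base circle of the fibration $\T_\b \to S^1$. The only place requiring care, and the step I would flag as the main (minor) obstacle, is verifying that the preimage $p^{-1}(k\Z)$ is genuinely isomorphic to $F_n \rtimes_{\b^k}\Z$ rather than merely abstractly of the right shape: one must check that the generating set $F_n \cup \{t^k\}$ satisfies no relations beyond $t^k g t^{-k} = g^{\b^k}$, which is clear because $H$ inherits the faithful semidirect-product structure from $\pi_1(\T_\b)$ and $t^k$ generates $p(H) = k\Z$ freely.
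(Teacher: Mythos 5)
Your proposal is correct and follows the same route as the paper, which derives the first assertion immediately from Corollary~\ref{cor_power} (equivalently, by taking $k$ to be the order of the underlying permutation so that $\b^k$ is pure). The remaining claims, which the paper asserts without further argument, are verified by you exactly as intended: bi-orderability via Proposition~\ref{prop_OPiffBi-ord}, and the identification of $\pi_1(\T_{\b^k})$ with the normal index-$k$ subgroup $p^{-1}(k\Z)$ of $F_n\rtimes_\b\Z$, i.e.\ the $k$-fold cyclic cover of the fibration over the circle.
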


\subsection{Periodic braids} 

Suppose that  $\b \in B_n$ is a periodic braid. 
It is known (see for example \cite[page~30]{Gonzalez11}) that 
there exists an integer $k \in {\Bbb Z}$ such that 
$\b$ is conjugate to either 

\begin{enumerate}
\item[Type 1.] 
$ (\delta_n \sigma_1)^k = (\sigma_1 \sigma_2 \cdots \sigma_{n-1} \sigma_1)^k$ or 

\item[Type 2.] 
$\delta_n^k = (\sigma_1 \sigma_2 \cdots \sigma_{n-1})^k $. 
\end{enumerate}

\begin{thm}
\label{thm_periodicBraid}
Let $\b \in B_n$ be a periodic braid. 
\begin{enumerate}
\item[(1)]
If $\b$ is conjugate to $ (\delta_n \sigma_1)^k $ for some $k$, then $\b$ is order-preserving and 
$\pi_1({\Bbb T}_{\beta})$ is bi-orderable. 

\item[(2)] 
If $\beta$ is conjugate to  $\delta_n^k$ for some $k$ 
with $k \not\equiv 0$ $\pmod n$, 
then $\b$ is not order-preserving and 
$\pi_1({\Bbb T}_{\beta})$ can not be bi-orderable. 
\end{enumerate}
\end{thm}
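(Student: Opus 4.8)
Both parts reduce, via Proposition~\ref{prop_OPiffBi-ord} and Proposition~\ref{prop_fibration-HNN}, to the single question of whether the monodromy automorphism of $F_n$ preserves some bi-ordering; and by Corollary~\ref{cor_multDelta^2} I am free to alter $\beta$ by any power of the central element $\Delta^2$ (which acts on $F_n$ by the inner automorphism $x\mapsto wxw^{-1}$, $w=x_1\cdots x_n$) and by any conjugation. The common starting point is the relation $\Delta^2=\delta_n^n=(\delta_n\sigma_1)^{n-1}$: it shows that $\delta_n$ has finite order $n$ and $\delta_n\sigma_1$ finite order $n-1$ in $\mathrm{Out}(F_n)$, so both monodromies are (powers of) rotations of $D_n$ and in either case the mapping torus $\mathbb{T}_\beta$ is Seifert fibered. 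I would first record that $\pi_1(\mathbb{T}_\beta)=F_n\rtimes_\beta\Z$ has nontrivial center: writing $d=\gcd(n,k)$, the element $t^{n/d}w^{-k/d}$ is central, as one checks using that every braid automorphism fixes $w$.

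For part (1) the essential feature is that the Type~1 rotation $\delta_n\sigma_1$ fixes one puncture; so after placing the basepoint at that puncture the induced automorphism fixes one free generator and cyclically permutes the remaining $n-1$ (up to the controlled conjugation coming from $\Delta^2$). When $\beta$ is not pure, no standard ordering can be invariant by Proposition~\ref{sym not id}, and Theorem~\ref{Perron-Rolfsen} does not apply either since the eigenvalues of $\beta_{ab}$ are roots of unity; the invariant ordering must therefore be non-standard. I would construct one by the method of \cite{LRR08} (the examples promised in Section~\ref{explicit}): exploiting the fixed generator together with the $\Z/(n-1)$ cyclic symmetry, one produces a bi-ordering of $F_n$ whose positive cone is $\beta$-invariant, and then bi-orderability of $\pi_1(\mathbb{T}_\beta)$ follows from Proposition~\ref{prop_fibration-HNN}.

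For part (2) the monodromy $\delta_n^k$ with $k\not\equiv 0\pmod n$ fixes no puncture: its permutation is a nontrivial product of $d$ cycles. The soft tools fail to detect non-orderability here—$\beta_{ab}$ has $1$ as an eigenvalue (the class of $w$), so Theorem~\ref{Clay-Rolfsen} is silent; Proposition~\ref{sym not id} excludes only standard orderings; and since $\beta^n$ is the inner automorphism by $w^k$, there is no nontrivial finite orbit in $F_n$ on which to invoke Proposition~\ref{finite orbit}. I would instead argue at the level of the $3$-manifold: $\widehat{\delta_n^k}$ is the $(n,k)$-torus link, so $S^3\setminus\mathrm{br}(\delta_n^k)$ is the Seifert-fibered exterior of a torus link together with its axis, with base a disk carrying one cone point of order $e=n/d\ge 2$. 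The plan is to exhibit \emph{generalized torsion} in $\pi_1(S^3\setminus\mathrm{br}(\delta_n^k))$—a product of conjugates of a nontrivial element equal to the identity—exactly as occurs for nontrivial torus knot groups; since a bi-ordered group has none (if $g>1$ then every conjugate of $g$ is $>1$), this forces non-bi-orderability, and Proposition~\ref{prop_OPiffBi-ord} concludes.

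The main obstacle is part (2): because the group is a link group it is always left-orderable (indeed $G/Z$ is torsion-free, by uniqueness of roots in left-orderable groups, so the cone-point element is secretly central), and the failure lives purely at the level of invariance of the positive cone under conjugation, which the eigenvalue and standard-ordering criteria cannot see. Producing the generalized-torsion relation explicitly—equivalently, ruling out every non-standard $\delta_n^k$-invariant bi-ordering of $F_n$—is the crux; a naive order-chase starting from the images $\beta(x_i)$ closes up consistently and yields no contradiction, so the argument must rely on the global Seifert structure rather than on the local action on generators.
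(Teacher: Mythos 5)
There is a genuine gap, concentrated in part (2). You assert that ``there is no nontrivial finite orbit in $F_n$ on which to invoke Proposition~\ref{finite orbit}'' and therefore retreat to an unexecuted plan (exhibiting generalized torsion in the Seifert-fibered exterior), which you yourself flag as ``the crux.'' But the finite-orbit criterion is exactly what works, and the idea you are missing is the freedom to change representative and basepoint. By Corollary~\ref{cor_well-defined} (and the surrounding discussion of the $\mathrm{Out}(F_n)$ ambiguity), whether $\beta$ is order-preserving may be tested on \emph{any} induced automorphism $\phi_{*q}$. Taking the representative of $\delta_n\in\mathrm{Mod}(D_n)$ to be a rigid rotation and the basepoint at the center of the disk, the induced automorphism is the literal cyclic permutation $x_1\mapsto x_n\mapsto x_{n-1}\mapsto\cdots\mapsto x_2\mapsto x_1$ of the free generators (equation (\ref{equation_delta}) in the paper) --- no conjugations at all, so its $n$th power is the identity, not ``inner by $w^k$.'' For $k\not\equiv 0\pmod n$ the orbit of $x_1$ under $\delta_n^k$ is then nontrivial and finite, and Proposition~\ref{finite orbit} finishes part (2) in one line. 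Your generalized-torsion route is not carried out, and the existence of such a relation is never established.

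Part (1) is also only a sketch, and it contains a misstatement that matters: you say the type-1 automorphism ``fixes one free generator and cyclically permutes the remaining $n-1$.'' If that were literally true, Proposition~\ref{finite orbit} would show $\delta_n\sigma_1$ is \emph{not} order-preserving. The actual action is $x_1\mapsto x_n$, $x_2\mapsto x_2$, $x_3\mapsto x_2x_1x_2^{-1}$, $x_4\mapsto x_3,\dots$; the single conjugated image is precisely what destroys the finite orbit and makes an invariant ordering possible. The paper proves (1) not by constructing an ordering but by a disk twist: the $n$th power of the disk twist on a component of $\widehat{\sigma_1^2}$ carries $S^3\setminus\mathrm{br}(\sigma_1^2)$ homeomorphically to $S^3\setminus\mathrm{br}(\beta')$ with $\beta'$ conjugate to $\sigma_1\sigma_2\cdots\sigma_{n+1}\sigma_1$, and $\sigma_1^2$ is pure, so Proposition~\ref{pure OP} and Proposition~\ref{prop_OPiffBi-ord} apply. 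The explicit invariant orderings you allude to (realizing $F_n$ as the finite-index subgroup $\mathcal{K}_n\le F_2$ and $\delta_n\sigma_1$ as conjugation by $u$ in $F_2$) do appear in Section~\ref{explicit}, but as a supplement, and your proposal does not actually construct them.
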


\noindent
Note that $\delta_n^k$ is order-preserving when $k \equiv 0$ $\pmod n$ 
since $\delta_n^n= \Delta^2$.  
Theorem~\ref{thm_periodicBraid} is a consequence of  \cite[Theorem~1.5]{BRW05}: 
If $\b$ is a periodic braid, then 
${\Bbb T}_\b$ is a Seifert fibered $3$-manifold. 
If $\b$ is of type 1, then ${\Bbb T}_\b$ has no exceptional fibres. 
If $\b$ is of type $2$ and if $k$ is not a multiple of $n$, then ${\Bbb T}_\b$ has an exceptional fibre. 

We will give an alternative, more explicit proof of Theorem~\ref{thm_periodicBraid} in Section~\ref{subsection_Alternative}.

\subsection{Disk twists}
\label{subsection_DiskTwists}
We review a method to construct links in $S^3$ 
whose complements are homeomorphic to each other. 
Let $L$ be a link in $S^3$. 
We denote a  tubular neighborhood  of $L$ by $\mathcal{N}(L)$, 
and  the exterior of $L$, that is $S^3 \setminus \mathrm{int}(\mathcal{N}(L))$ by $\mathcal{E}(L)$. 
Suppose that $L$ contains an unknot  $K$ as a sublink. 
Then $\mathcal{E}(K)$ (resp. $\partial \mathcal{E}(K)$)
is homeomorphic to a solid torus (resp. torus). 
We denote the link $ L\setminus K$ by $L_K$. 
Taking a disk $D$ bounded by the longitude of $ \mathcal{N}(K)$, 
we define two homeomorphisms 
$$T_D:  \mathcal{E}(K) \rightarrow   \mathcal{E}(K)$$
and 
$$\mathfrak{t}_D:   \mathcal{E}(L) 
(=  \mathcal{E}(K \cup L_K)) \rightarrow \mathcal{E}(K \cup T_D(L_K))$$ 
as follows. 
We cut $ \mathcal{E}(K)$ along $D$. 
 We have  resulting two sides obtained from $D$. 
Then we reglue the two sides by rotating either of the sides  $360$ degrees 
so that the mapping class of the restriction 
$T_D|_{\partial  \mathcal{E}(K)}: \partial \mathcal{E}(K)\rightarrow 
\partial \mathcal{E}(K)$ defines the right-handed Dehn twist about $\partial D$, 
see Figure~\ref{fig_DiskTwist}(1). 
Such an operation defines the homeomorphism 
$T_D:  \mathcal{E}(K) \rightarrow   \mathcal{E}(K)$. 
If $m$ segments of $L_K$ pass through $D$, then 
$T_D(L_K)$ is obtained from $L_K$ by adding a full twist braid $\Delta_m^2$ near $D$. 
In the case  $m=2$, see Figure~\ref{fig_DiskTwist}(2). 
For example, if $L$ is equivalent to $\mathrm{br}(\beta)$ for some $\beta \in B_n$ 
and if $K$ is taken to be the braid axis $A$ of $\beta$, 
then $T_D(L_K)$ is equivalent to the closure of $\beta \Delta_n^2$. 
Notice that 
$T_D:  \mathcal{E}(K) \rightarrow   \mathcal{E}(K)$ determines the latter  homeomorphism 
$$\mathfrak{t}_D:  \mathcal{E}(L) 
(=  \mathcal{E}(K \cup L_K)) \rightarrow \mathcal{E}(K \cup T_D(L_K)).$$
We call $\mathfrak{t}_D$  the {\it (right-handed) disk twist about} $D$.

For any integer $m \ne 0$, we have a homeomorphism of 
the $m$th power $T_D^{m}:  \mathcal{E}(K) \rightarrow   \mathcal{E}(K)$ 
so that 
$T_D^{m}|_{\partial  \mathcal{E}(K)}: \partial  \mathcal{E}(K) \rightarrow \partial  \mathcal{E}(K)$
is the $m$th power of the right-handed Dehn twist about $\partial D$. 
Observe that  $T_D^{m}$  converts $L = K \cup L_K$ into a link 
 $K \cup T_D^{m}(L_K)$ in $S^3$ 
 such that $S^3 \setminus L$ is homeomorphic to $S^3 \setminus (K \cup T^{m}(L_K))$. 
 We denote by $\mathfrak{t}_D^{m}$, a homeomorphism: $\mathcal{E}(L)(= \mathcal{E}(K \cup L_K)) \rightarrow \mathcal{E}(K \cup T_D^{m}(L_K))$ 
 and call  $\mathfrak{t}_D^{m}$  the $m$th power of {\it (right-handed) disk twist} $\mathfrak{t}_D$ about $D$.

\begin{center}
\begin{figure}
\includegraphics[width=5in]{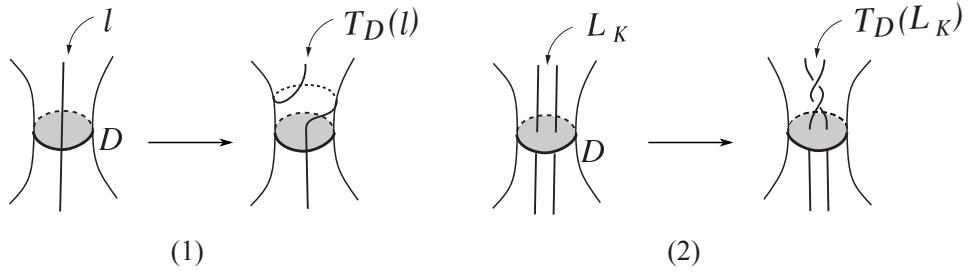}
\caption{(1) Image of $\ell$ under $T_D$, 
where $\ell$ is an arc on $\partial \mathcal{E}(K)$ 
which passes through $\partial D$. 
(2) Local picture of $L_K$ and its image $T_D(L_K)$. }
\label{fig_DiskTwist}
\end{figure}
\end{center}

\subsection{Alternative proof of  Theorem~\ref{thm_periodicBraid}}
\label{subsection_Alternative}

\begin{proof}[Proof of Theorem~\ref{thm_periodicBraid}] 
We prove the claim~(1).  Consider the pure $2$-strand braid $\s_1^2$.  
Then $\mathrm{br}(\s_1^2)$ is a link consisting of three unknotted components, including the axis.  Performing a disk twist $n$ times on one of the components of the closed braid $\widehat{\s_1^2}$
converts the braided link of the 2-strand braid $\sigma_1^2$ to the braided link of the $(n+2)$-strand braid $\beta'$, which is conjugate to  the type 1 braid 
$\sigma_1 \sigma_2 \cdots \sigma_{n+1} \sigma_1$, 
see Figure~\ref{fig_ex_disktwist}.  
But the disk twist being a homeomorphism of the complement of the link, we see that $\T_{\s_1^2} \simeq \T_{\beta'}$.  
But since $\s_1^2$ is pure, $\T_{\s_1^2}$ has bi-orderable fundamental group, in fact isomorphic with $F_2 \times \Z.$  Hence the fundamental group of $\T_{\beta'}$ is bi-orderable and 
$\sigma_1 \sigma_2 \cdots \sigma_{n+1} \sigma_1$ is order-preserving. 
Thus the $k$th power $(\sigma_1 \sigma_2 \cdots \sigma_{n+1} \sigma_1)^k$ is also order-preserving.

We turn to the claim~(2). 
We use the basepoint $p_1$ in the interior of $D_n$. 
There exists a homeomorphism $\phi: D_n \rightarrow D_n$ 
which represents $\delta_n \in \mathrm{Mod}(D_n)$ 
such that the induced map 
$\phi_*: \pi_1(D_n, p_1) \rightarrow  \pi_1(D_n, p_1)$ 
gives rise to the following automorphism\footnote{The product $\sigma_1 \sigma_2 \cdots \sigma_{n-1} \in \mathrm{Aut}(\pi_1(D_n, p_1))$ of $\sigma_i$'s in (\ref{equation_sigma}) 
is given by $x_1 \mapsto x_n$, $x_j \to x_n x_{j-1} x_n^{-1}$ if $j \ne 1$. This is equal to 
$\delta_n \in \mathrm{Aut(\pi_1(D_n, p_1))}$ in (\ref{equation_delta}) 
up to an inner automorphism.}  
on $F_n= \langle x_1, \cdots, x_n \rangle$: 
\begin{eqnarray}
\label{equation_delta}
 \delta_n: x_1 \mapsto x_n \mapsto x_{n-1} \mapsto \cdots \mapsto x_2 \mapsto x_1,
\end{eqnarray}
see Figure~\ref{fig_sigma-i}(2). 
The orbit of $x_1$ is nontrivial  (since $x_1 \ne x_n$) and finite. 
By Proposition~\ref{finite orbit}, $\delta_n$ is not order-preserving. 
Then $k \not\equiv 0$ $\pmod n$ if and only if $\delta_n^k$ has a nontrivial finite orbit of $x_1$. 
Thus $\delta_n^k$ is not order-preserving. 
\end{proof}

\begin{prop}
The half twist $\Delta_n \in B_n$ is order-preserving if and only if $n$ is odd. 
\end{prop}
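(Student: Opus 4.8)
The plan is to recognize $\Delta_n$ as a periodic braid and to identify which of the two periodic types of Theorem~\ref{thm_periodicBraid} it falls into, reading off the conclusion from that theorem. First I would observe that $\Delta_n^2 = \Delta^2$ lies in the kernel of $\Gamma: B_n \to \mathrm{Mod}(D_n)$, so $\Gamma(\Delta_n)$ has order dividing $2$ in $\mathrm{Mod}(D_n)$; it is nontrivial because the permutation underlying $\Delta_n$ is nontrivial. Hence $\Delta_n$ is periodic, and by the classification recalled before Theorem~\ref{thm_periodicBraid} it is conjugate either to a power $\delta_n^k$ (Type 2) or to a power $(\delta_n\sigma_1)^k$ (Type 1).

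To decide which, I would pass to the permutation homomorphism $B_n \to \mathcal{S}_n$. The permutation underlying $\Delta_n$ is the reversal $w_0: i \mapsto n+1-i$, an involution that is fixed-point-free when $n$ is even and has exactly one fixed point when $n$ is odd. By contrast, the permutation of $\delta_n^k$ is a $k$th power of an $n$-cycle, while that of $(\delta_n\sigma_1)^k$ is a $k$th power of an $(n-1)$-cycle fixing one letter. A nontrivial power of an $n$-cycle is fixed-point-free, whereas every power of an $(n-1)$-cycle fixes the distinguished letter. Since conjugate braids have conjugate permutations and $w_0$ is nontrivial, comparing fixed-point counts forces the dichotomy: for $n$ odd (one fixed point) $\Delta_n$ must be Type 1, i.e.\ conjugate to a power of $\delta_n\sigma_1$; for $n$ even (no fixed point) $\Delta_n$ must be Type 2, conjugate to $\delta_n^k$ with $k\not\equiv 0\pmod n$ (the exponent is nonzero precisely because $w_0$ is a nontrivial involution).

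Finishing the argument: if $n$ is odd, $\Delta_n$ is of Type 1, so Theorem~\ref{thm_periodicBraid}(1) shows it is order-preserving; if $n$ is even, $\Delta_n$ is conjugate to $\delta_n^k$ with $k\not\equiv 0\pmod n$, so Theorem~\ref{thm_periodicBraid}(2), together with invariance of order-preservation under conjugation, shows it is not order-preserving. As consistency checks, $\Delta_2 = \sigma_1 = \delta_2$ (Type 2, $k=1=n/2$) and $\Delta_3 = \sigma_1\sigma_2\sigma_1 = \delta_3\sigma_1$ (Type 1, $k=1$) already realize the two types on the nose.

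The main obstacle is the identification of the periodic type, and the subtlety to be careful about is that equality of permutations is only a necessary condition for conjugacy in $B_n$. Here it suffices because the classification already guarantees that $\Delta_n$ is conjugate to one of the two families, so the fixed-point count merely \emph{selects} between them rather than establishing conjugacy from scratch. Note also that the precise value of the exponent $k$ never needs to be computed: for $n$ even it is enough to know $k\not\equiv 0\pmod n$, which is exactly the content of $w_0$ being a nontrivial involution.
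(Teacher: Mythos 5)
Your proof is correct, and it reaches the paper's conclusion by the same overall route: identify $\Delta_n$ as a periodic braid of a definite type and invoke Theorem~\ref{thm_periodicBraid}. The difference lies in how the type is pinned down. The paper simply asserts the explicit conjugacies $\Delta_{2m+1}\sim(\delta_{2m+1}\sigma_1)^m$ and $\Delta_{2m}\sim\delta_{2m}^m$ (with $m\not\equiv 0\pmod{2m}$) and stops there; you instead deduce periodicity from $\Delta_n^2=\Delta^2\in\ker\Gamma$ and then select the type by comparing fixed-point counts of the underlying permutations --- the reversal $i\mapsto n+1-i$ versus powers of an $n$-cycle (fixed-point-free when nontrivial) and powers of an $(n-1)$-cycle fixing one letter. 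Your version never computes the exponent $k$, which the paper's does, but it correctly handles the two cases that need excluding (the identity permutation for Type 2, via nontriviality of $w_0$; and the guaranteed fixed point for Type 1 powers), and your observation that the a priori classification reduces the conjugacy question to a mere selection between two families is exactly the point that makes the permutation invariant sufficient. What your argument buys is a self-contained justification of a step the paper leaves unproved; what the paper's buys is brevity and the explicit root-of-$\Delta^2$ representatives.
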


\begin{proof}
If $n=2m+1$, 
then $\Delta_{n}$ is conjugate to $(\sigma_1 \sigma_2 \cdots \sigma_{2m}\sigma_1)^m$, 
and if $n=2m$, 
then $\Delta_{n}$ is conjugate to $(\sigma_1 \sigma_2 \cdots \sigma_{2m-1})^m$ with $m \not\equiv 0$ $\pmod {2m}$. 
By Theorem~\ref{thm_periodicBraid}, we finish the proof. 
\end{proof}

\begin{center}
\begin{figure}
\includegraphics[width=3in]{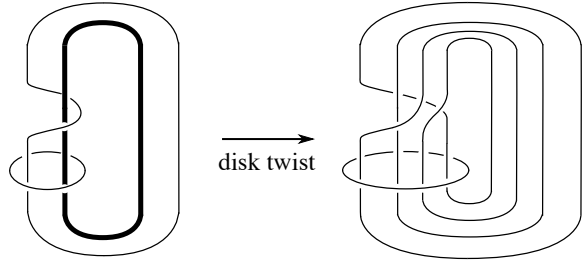}
\caption{$n$th power of the disk twist 
converts the braided link of $\s_1^2 $ to that of $\sigma_1 \sigma_2 \cdots \sigma_{n+1}\sigma_1$. 
($n=2$ in this case.)}
\label{fig_ex_disktwist}
\end{figure}
\end{center}

As a special case of Theorem \ref{thm_periodicBraid}(2), we see that the periodic 3-strand braid $\s_1\s_2$ is not order-preserving.
Another way to see this is to observe that the 3-strand braid $\sigma_1\sigma_2$ gives the following automorphism on $F_3 = \langle x,y,z \rangle$ 
by using the Artin representation $B_3 \rightarrow \mathrm{Aut}(F_3)$ 
$$z \mapsto y \mapsto x \mapsto xyzy^{-1}x^{-1}$$
One can show this doesn't preserve a bi-order of $F_3$ as follows.  
Assume some bi-order $<$ were preserved by the map, then supposing without loss of generality 
that $x<y$, we would have $xyzy^{-1}x^{-1} < x$ (applying $\s_1\s_2$), hence $yzy^{-1} < x$ (conjugation invariance), so $xyx^{-1} < xyzy^{-1}x^{-1}$ (applying $\s_1\s_2$) and then $xyx^{-1} < x$ (transitivity) and finally the contradiction $y < x$ (conjugation again).

On the other hand 
$\sigma_1 \sigma_2^{-1} \in B_3$ is pseudo-Anosov, and in fact it is 
the simplest pseudo-Anosov $3$-strand braid; 
see Section~\ref{subsection_small-dil}. 
We thank George Bergman for pointing out the following argument.  It will also follow from more general result 
in Section~\ref{subsection_Non-order-preserving}; see 
Corollary \ref{cor_beta-1m}.

\begin{thm}
The braid $\s_1\s_2^{-1} \in B_3$ is not order-preserving. 
\end{thm}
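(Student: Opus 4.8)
The plan is to show directly that the automorphism of $F_3$ induced by $\s_1\s_2^{-1}$ cannot preserve any positive cone, by the same conjugation-invariance technique used to handle $\s_1\s_2$ and the generators $\s_i$. First I would write down the Artin automorphism explicitly. Using (\ref{equation_Artin}), $\s_1$ acts by $x \mapsto xyx^{-1},\ y \mapsto x,\ z\mapsto z$, and $\s_2^{-1}$ acts by $y \mapsto z,\ z\mapsto z^{-1}yz,\ x \mapsto x$ (inverting the corresponding formula). Composing these (reading left to right, following the paper's right-action convention) gives an explicit automorphism $\phi$ of $F_3=\langle x,y,z\rangle$; the very first step is to record $\phi(x),\phi(y),\phi(z)$ as reduced words.

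Next I would argue by contradiction: suppose $<$ is a bi-ordering of $F_3$ preserved by $\phi$, with positive cone $\P$. The strategy is to start from a normalization of the relative order of the generators (using that a bi-ordering is a total order and that we may relabel or invert if needed) and then repeatedly apply two facts: (i) $\phi$-invariance, $u<v \implies \phi(u)<\phi(v)$; and (ii) conjugation invariance, $gug^{-1}<gvg^{-1} \iff u<v$. As in the $\s_1\s_2$ computation in the excerpt, each application of $\phi$ produces an inequality between conjugates, and conjugation invariance then strips off the conjugating factors to yield a cleaner inequality among the generators. Iterating this should force a cycle of inequalities that closes up into a contradiction such as $x<y$ and $y<x$ simultaneously.

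The main obstacle I anticipate is purely combinatorial bookkeeping: because $\s_1\s_2^{-1}$ is pseudo-Anosov rather than periodic, its abelianization has an eigenvalue equal to $1$ (it is a symmetric automorphism, as noted after Theorem~\ref{Clay-Rolfsen}), so the slick eigenvalue obstruction of Theorem~\ref{Clay-Rolfsen} is unavailable, and Proposition~\ref{finite orbit} does not apply because $\phi$ has no nontrivial finite orbit. Hence the argument cannot be abstract; it must track the actual words $\phi^k$ produces. The delicate point is choosing the right initial normalization so that the chain of deduced inequalities closes in finitely many steps rather than running on indefinitely. I would experiment with the two cases $x<y$ and $y<x$ (and, if necessary, comparisons involving $z$), expecting that in each case a short chain of alternating $\phi$-applications and conjugation cancellations collapses to a contradiction, exactly mirroring the four-line argument given for $\s_1\s_2$ just above the statement.

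Finally, I would remark that this is consistent with, and subsumed by, the forthcoming general result: since the theorem states the conclusion also follows from Corollary~\ref{cor_beta-1m} in Section~\ref{subsection_Non-order-preserving}, the explicit argument here serves as the illustrative special case, and I would keep it self-contained so the reader sees the conjugation-invariance mechanism in its simplest pseudo-Anosov instance.
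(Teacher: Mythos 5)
Your setup is correct: the automorphism induced by $\s_1\s_2^{-1}$ is indeed $x \mapsto xzx^{-1}$, $y \mapsto x$, $z \mapsto z^{-1}yz$, and you are right that Theorem~\ref{Clay-Rolfsen} and Proposition~\ref{finite orbit} give no obstruction here. But the heart of the proof is missing. Your plan is to chain inequalities among the generators $x,y,z$ and their conjugates, ``exactly mirroring'' the four-line argument for $\s_1\s_2$, and you merely \emph{expect} the chain to close. It does not obviously do so, and you give no reason it should. Try it: from $y<x$ one gets $x<z$ (apply $\phi$ and strip the conjugation), then $zxz^{-1}<y$ from $y<z$, and from there each further application of $\phi$ produces comparisons between words with strictly longer conjugating prefixes (e.g.\ $xzx^{-1}$ versus $z^{-1}y^{-1}zxz^{-1}yz$) that conjugation invariance cannot strip down to a relation among the generators. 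The reason the $\s_1\s_2$ chain closes in four steps is that $\s_1\s_2$ is periodic, so its cube is inner by a fixed element; $\s_1\s_2^{-1}$ is pseudo-Anosov, the words $\phi^k(x_i)$ grow without bound, and the naive chain runs on indefinitely.

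The paper's proof (due to Bergman) requires a different idea that your proposal never identifies: one tracks the orbit of the specific non-generator element $w = y^{-1}x$. Normalizing $1<w$ (i.e.\ $y<x$, whence $y<x<z$), one computes $\phi(w) = zx^{-1}$ and $\phi^2(w) = (z^{-1}y)\,\phi(w)\,\bigl(x(xz^{-1})x^{-1}\bigr)$, where both outer factors are negative, so the action is \emph{decreasing} on the orbit of $w$; on the other hand $\phi^{-1}(w) = w\,(yz^{-1})\,\bigl(y^{-1}(x^{-1}y)y\bigr)$ with both trailing factors negative, so the action is \emph{increasing} on the same orbit --- a contradiction. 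This ``monotone on an orbit'' mechanism (not present in the $\s_1\s_2$ argument you are modelling on) is the same device that drives Theorems~\ref{thm_BigEntropy}, \ref{thm_BigEntropy2} and \ref{thm_small-dil}. Without choosing the right element $w$ and exhibiting the two incompatible monotonicities, your proposal is a search strategy rather than a proof. (Your closing remark is fine: the statement does also follow from Corollary~\ref{cor_beta-1m}, but the proof of that corollary rests on the same orbit-monotonicity argument.)
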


\begin{proof}
Let $x, y, z$ be the free generators of $F_3$.  
Using the Artin representation $B_3 \rightarrow \mathrm{Aut}(F_3)$, one sees that 
the action of 
$\s_1\s_2^{-1}$ is
$$x \mapsto xzx^{-1}, \quad y \mapsto x, \quad z \mapsto z^{-1}yz.$$
Consider the orbit of the element $w = y^{-1}x$ under this action.  Assuming there is a bi-order $<$ of $F_3$ invariant under this action, 
we may assume without loss of generality  that $ 1<w$, and therefore all elements of the orbit of $w$ are positive.
Moreover $1<w$ implies $y<x$ and since $w \mapsto zx^{-1}$ we have $ y<x<z$.

Now the calculation $$w \mapsto zx^{-1} \mapsto z^{-1}yzxz^{-1}x^{-1} = (z^{-1}y)zx^{-1}x(xz^{-1})x^{-1}$$ and the facts that $z^{-1}y$ and $x(xz^{-1})x^{-1}$ are negative show that the action is decreasing on the orbit of $zx^{-1}$, which is also the orbit of $w$.

Calculating the preimages of the generators, we have the action of $\s_1\s_2^{-1}$ expressed as
$$y \mapsto x, \quad y^{-1}xyzy^{-1}x^{-1}y \mapsto y, \quad y^{-1}xy \mapsto z,$$
and therefore
$$y^{-1}xyz^{-1}y^{-1}x^{-1}yy \mapsto y^{-1}x = w.$$
But notice that $y^{-1}xyz^{-1}y^{-1}x^{-1}yy = w(yz^{-1})y^{-1}(x^{-1}y)y$ and since the expressions in the parentheses are $<1$ 
(i.e, $yz^{-1}<1$ and $y^{-1}(x^{-1}y)y<1$), 
we conclude that the action is increasing on the orbit of $w$.  This contradiction shows that an invariant bi-order of $F_3$ cannot exist.
\end{proof}

\subsection{Explicit orderings preserved by periodic braids of type 1}
\label{explicit}
We have seen that for $n \ge 3$ the root 
$\delta_n \sigma_1 $ of the full twist $\Delta^2 \in B_n$ preserves an ordering of $F_n$.  In this section we will explicitly construct uncountably many such orderings.  

Recall that $\delta_n \in B_n$  induces the following automorphism on $F_n$,  
using the basepoint in the interior of $D_n$ 
$$\delta_n: x_1 \mapsto x_n \mapsto x_{n-1} \mapsto \cdots \mapsto x_2 \mapsto x_1,$$
see (\ref{equation_delta}). 
By (\ref{equation_sigma}) 
$\sigma_1 \in B_n$ induces the following automorphism on $F_n$, 
using the same basepoint: 
$$\sigma_1: x_1 \mapsto x_2,\ x_2 \mapsto x_2 x_1 x_2^{-1},\ x_j \mapsto x_j\ \mbox{if}\ j  \ne 1,2.$$
Thus 
the automorphism $\delta_n \sigma_1$ on $F_n $  is given by
$$\delta_n \sigma_1: 
x_1 \to x_n, \quad x_2 \to x_2, \quad x_3 \to x_2 x_1 x_2^{-1}, \quad x_4 \to x_3, \quad \cdots , \quad x_n \to x_{n-1}.$$
(For the case $n=3$, just take the first three terms above.)

Here is another way to realize this automorphism of $F_n$, 
elaborating on Example 3.6 in \cite{LRR08}.  
Fix $n \ge 3$ and consider the free group $F_2 = \langle u, v \rangle$ of rank $2$ 
and the homomorphism of $F_2$ onto the cyclic group $G = \langle t \mid t^{n-1} = 1 \rangle$ given by 
$u \to t$ and $v \to 1$.  The kernel $\mathcal{K} = \mathcal{K}_n$ of this map is a normal subgroup of $F_2$, of index $n-1$.  If we realize $F_2$ as the fundamental group of a bouquet of two circles labelled $u$ and $v$, 
the covering space corresponding to $\mathcal{K}$ is a finite planar graph as pictured in Figure~\ref{fig_graph_covering} whose fundamental group is free of rank $n$.  
Using the basepoint in the covering space depicted in Figure~\ref{fig_graph_covering}, we see that $\mathcal{K}$ has the free generators $z_1, \dots, z_n$, where 
\begin{eqnarray*}
z_1 = v, \quad z_2 &=& u^{n-1}, \quad z_3 = u^{n-2}vu^{2-n}, \quad z_4 = u^{n-3}vu^{3-n}, 
\\
\quad \dots \quad , z_{n-1} &=& u^2vu^{-2}, \quad z_n = uvu^{-1}.
\end{eqnarray*}

Now consider the automorphism of the normal subgroup $\mathcal{K}$ of $F_2$ 
given by $\phi(x) = uxu^{-1}$.  This is not an inner automorphism of $\mathcal{K}$ but it is an inner automorphism of the larger group $F_2$.  
Therefore, if we take {\it any} bi-ordering of $F_2$, then its restriction to 
$\mathcal{K}$ will be preserved by $\phi$.  By inspection, the action of $\phi$ on $\mathcal{K}$ is given by
$$z_1 \to z_n, \quad z_2 \to z_2, \quad z_3 \to z_2z_1z_2^{-1}, \quad z_4 \to z_3, \quad \cdots , \quad z_n \to z_{n-1}.$$
Under the isomorphism $\mathcal{K} \cong F_n$ given by $z_i \mapsto x_i$ for each $i$, we see that $\phi$ corresponds to 
$\delta_n \sigma_1$.  
One can use this isomorphism to see that any bi-ordering of $F_2$ restricted to $\mathcal{K}$ provides an ordering of $F_n$ invariant under $\delta_n \sigma_1$.  
 Finally note that any ordering of $F_n$ respected by $\delta_n \sigma_1$ will also be respected by $(\delta_n \sigma_1)^k$ for any integer $k$.
 
 \begin{center}
\begin{figure}
\includegraphics[width=4.5in]{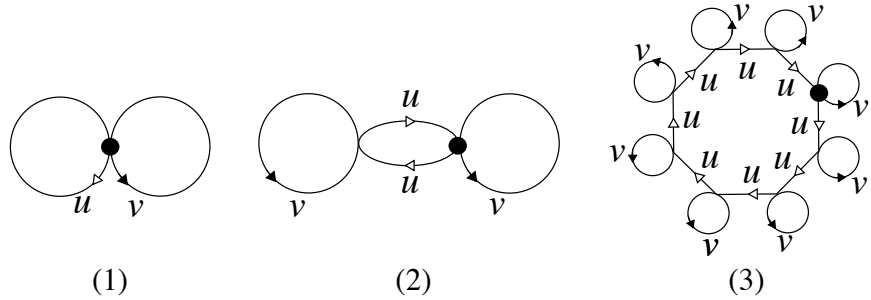}
\caption{(1) A bouquet of two circles $u$ and $v$. 
(2) Covering space corresponding to $\mathcal{K}_n$ when $n= 3$. 
(3)  Covering space corresponding to $\mathcal{K}_n$ when $n= 9$.}
\label{fig_graph_covering}
\end{figure}
\end{center}

Note that we can use, for example, any standard ordering of $F_2$.  
However, this ordering restricted to $\mathcal{K}$ cannot be a standard ordering (defined using the lower central series of $\mathcal{K}$) by Proposition \ref{sym not id}.

\subsection{Tensor product of braids}

Given braids $\a \in B_m$ and $\b \in B_n$ one can form the $(m+n)$-strand braid $\a \otimes \b \in B_{m+n}$ with $\a$ on the first $m$ strings and $\b$ on the last $n$ strings, 
but no crossings between any of the first $m$ strings with any of the last $n$ strings (Figure~\ref{fig_tensorProduct});
see for example \cite[page~69]{KT08}.   The action of 
$\a \otimes \b$ on $F_{m+n} \cong F_m \star F_n$ is just the free product $\a \star \b : F_m \star F_n \to F_m \star F_n$.

 \begin{center}
\begin{figure}
\includegraphics[width=3in]{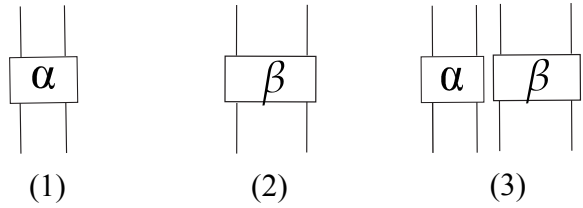}
\caption{(1) $\alpha \in B_m$. 
(2)  $\beta \in B_n$. 
(3) $\alpha \otimes \beta \in B_{m+n}$.}
\label{fig_tensorProduct}
\end{figure}
\end{center}

The following Lemma and Proposition are proved in \cite[Corollary 4]{Rol16}. 
\begin{lem}
\label{lem_tensor}
Suppose $(G, <_G)$ and $(H, <_H)$ are bi-ordered groups.  Then there is a bi-ordering of $G \star H$ which extends the orderings of the factors and such that whenever $\phi : G \to G$ and $\psi : H \to H$ are order-preserving automorphisms, the ordering of $G \star H$ is preserved by the automorphism $\phi \star \psi : G \star H \to G \star H$.
\end{lem}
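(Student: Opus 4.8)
The plan is to build the bi-ordering of $G \star H$ as a lexicographic extension along the natural short exact sequence
\[
1 \to C \to G \star H \xrightarrow{\ \pi\ } G \times H \to 1,
\]
where $C$ is the \emph{cartesian subgroup}, the kernel of the projection sending each free factor identically into the direct product. The two ingredients I need are a bi-ordering of the quotient $G\times H$ and a bi-ordering of the kernel $C$ that is invariant under conjugation by \emph{all} of $G\star H$; Proposition~\ref{prop_extension} then assembles them into a bi-ordering of $G\star H$. For the quotient I take the lexicographic order with $G$ dominant: $(g,h)$ is positive iff $g >_G 1$, or else $g=1$ and $h >_H 1$. This restricts to $<_G$ and $<_H$ on the factors and is manifestly preserved by $\phi \times \psi$ whenever $\phi,\psi$ are order-preserving, so it already disposes of the ``product part'' of both the extension claim and the functoriality claim.

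The real content is ordering $C$. First I record that $C$ is a free group: by the Kurosh subgroup theorem it meets every conjugate of $G$ and of $H$ trivially, so it receives no free-factor contributions and is free. Being free, $C$ is residually torsion-free nilpotent, so its lower central series has trivial intersection and the standard construction of Appendix~\ref{Ordering free groups} applies: any choice of bi-orderings on the free abelian graded pieces $M_k = \gamma_k(C)/\gamma_{k+1}(C)$ assembles lexicographically into a bi-ordering of $C$. I do not take these orderings arbitrarily. The conjugation action of $G\star H$ on $C$ descends, on each $M_k$, to an action of $G\times H$, since inner automorphisms of $C$ act trivially on lower central quotients; thus it suffices to bi-order each $M_k$ invariantly under $G\times H$. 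Viewing $M_k$ as a module over the integral group ring $\Z[G\times H]$ with a basis indexed and ordered by the bi-ordered group $G\times H$, I order it by a \emph{leading-coefficient} rule: declare an element positive when the integer coefficient of its largest monomial (lexicographically, first by basis index, then by the group element via the order of $G\times H$) is positive. Because multiplying by a single group element merely translates supports and leaves that leading integer coefficient unchanged, the resulting positive cone $\mathcal{P}_{M_k}$ is invariant under the $G\times H$-action.

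With these choices the bi-ordering of $C$ is invariant under conjugation by every element of $G\star H$, so Proposition~\ref{prop_extension} produces a bi-ordering $\prec$ of $G\star H$ with positive cone $\mathcal{P}_C \cup \pi^{-1}(\mathcal{P}_{G\times H})$. Restricting $\prec$ to the factor $G$: its elements have trivial $C$-component, so their $\prec$-sign is read off from the quotient order, which restricts to $<_G$; similarly for $H$. Hence $\prec$ extends the factor orderings. For functoriality, $\phi\star\psi$ carries each factor to itself, so it preserves $C$ and induces $\phi\times\psi$ on $G\times H$, which preserves the lexicographic quotient order. On each $M_k$ the induced map is semilinear over $\phi\times\psi$; since $\phi\times\psi$ preserves the order of $G\times H$ it preserves the ordering of both basis indices and monomial supports and fixes the integer coefficients, hence preserves $\mathcal{P}_{M_k}$. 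Therefore $\phi\star\psi$ preserves both $\mathcal{P}_C$ and $\pi^{-1}(\mathcal{P}_{G\times H})$, and so the whole order $\prec$.

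The step I expect to be the main obstacle is precisely the invariant ordering of the graded modules $M_k$ and its compatibility with $\phi\star\psi$. One must verify that each $M_k$ is a sufficiently well-behaved $\Z[G\times H]$-module (free on a basis naturally indexed by $G\times H$, or at least torsion-free and embedded in such a free module) so that the leading-coefficient recipe is defined, and one must check that the semilinear action of $\phi\star\psi$ genuinely fixes the leading integer coefficients rather than merely permuting supports — this is what forces the basis to be indexed and ordered by $G\times H$ itself. Everything else, namely freeness of $C$, the extension criterion, and the behaviour on the $G\times H$-quotient, is routine given the earlier propositions.
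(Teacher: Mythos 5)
A point of reference first: the paper does not prove Lemma~\ref{lem_tensor} internally but cites \cite[Corollary 4]{Rol16}, where the argument is a functorial version of Vinogradov's theorem --- one embeds $G \star H$ into the units $1 + \hat I$ of the completed coproduct of the group rings $\Z[G]$ and $\Z[H]$, orders the graded pieces $\hat I^n/\hat I^{n+1}$ (explicit direct sums of alternating tensor products of the augmentation ideals $IG$ and $IH$) lexicographically using $<_G$ and $<_H$, and takes the leading-term order. Your architecture --- a lexicographic extension over $1 \to C \to G\star H \to G\times H \to 1$ with a conjugation-invariant order on the cartesian subgroup $C$ --- is a different organization of the same circle of ideas, and the reductions you do carry out are correct: $C$ is free by Kurosh, the conjugation action on each $M_k=\gamma_kC/\gamma_{k+1}C$ does factor through $G\times H$, Proposition~\ref{prop_extension} assembles the pieces, and the quotient order correctly handles both the extension of the factor orderings and equivariance under $\phi\times\psi$.

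The step you yourself flag as ``the main obstacle'' is, however, a genuine gap, and the structural claim you lean on there is not correct as stated. $M_1 = C/[C,C]$ is isomorphic as a $\Z[G\times H]$-module to $IG\otimes_\Z IH$, which is \emph{not} free on a basis indexed by $G\times H$; it is an ideal-like submodule of the rank-one free module $\Z[G]\otimes\Z[H]=\Z[G\times H]$, so for $k=1$ your embedding escape hatch does rescue the leading-coefficient recipe. For $k\ge 2$ the needed target changes: by Magnus--Witt, $M_k$ is the degree-$k$ part of the free Lie ring on $M_1$ with the $G\times H$-action induced from degree one, and to run your recipe you would have to embed it equivariantly into $M_1^{\otimes k}\subset \Z[(G\times H)^k]$, which is free over $\Z[G\times H]$ (acting diagonally) on a basis indexed by $(G\times H)^{k-1}$ --- not by $G\times H$ --- and then build a total order on that basis, together with an order of each diagonal orbit, preserved by the semilinear action of $\phi\star\psi$. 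None of the required ingredients (the equivariant identification of $\mathrm{gr}(C)$ with the free Lie ring on $IG\otimes IH$, its embedding in the tensor algebra over $\Z$, the invariant order on $\Z[(G\times H)^k]$) is set up or verified in your write-up, and this is exactly where the content of the lemma lives. This is also why the cited proof works with the $I$-adic filtration of the completed coproduct instead: there the graded pieces come pre-packaged as explicit tensor products of $IG$ and $IH$, and the analysis of $C$ and its lower central quotients as $\Z[G\times H]$-modules is bypassed entirely.
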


\begin{prop}
The braids $\a \in B_m$ and $\b \in B_n$ are order-preserving if and only if  $\a \otimes \b \in B_{m+n}$ is order-preserving.
\end{prop}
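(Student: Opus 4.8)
The plan is to deduce both implications from Lemma~\ref{lem_tensor}, together with the observation recorded just before the statement that the action of $\a \otimes \b$ on $F_{m+n} \cong F_m \star F_n$ is exactly the free-product automorphism $\a \star \b : F_m \star F_n \to F_m \star F_n$. With that identification in hand, the two directions become, respectively, a direct application of the lemma and a restriction-to-free-factors argument.

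For the forward implication, suppose $\a$ preserves a bi-ordering $<_G$ of $F_m$ and $\b$ preserves a bi-ordering $<_H$ of $F_n$. I would apply Lemma~\ref{lem_tensor} with $G = F_m$, $H = F_n$, $\phi = \a$, $\psi = \b$ to obtain a bi-ordering of $F_m \star F_n$ preserved by $\a \star \b$. Since $\a \star \b = \a \otimes \b$ under the isomorphism $F_m \star F_n \cong F_{m+n}$, this exhibits a bi-ordering of $F_{m+n}$ invariant under $\a \otimes \b$, so $\a \otimes \b$ is order-preserving. This direction requires essentially no work beyond citing the lemma.

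For the converse, the key point is that each free factor is an invariant subgroup of $\a \star \b$. Indeed, since $\a$ and $\b$ act on disjoint blocks of strands with no crossings between them, the automorphism $\a \star \b$ sends $x_1, \dots, x_m$ to words in $x_1, \dots, x_m$ and $x_{m+1}, \dots, x_{m+n}$ to words in $x_{m+1}, \dots, x_{m+n}$. Hence $(\a \star \b)(F_m) = F_m$ with restriction $\a$, and likewise $(\a \star \b)(F_n) = F_n$ with restriction $\b$. Now assume $\a \otimes \b$ preserves a bi-ordering $<$ of $F_{m+n}$ with positive cone $\P$. I would check that $\P \cap F_m$ is the positive cone of a bi-ordering of the subgroup $F_m$, the sub-semigroup, trichotomy, and conjugation-invariance conditions of Proposition~\ref{prop_BO-criterion} all being inherited by any subgroup. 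Writing $\phi = \a \star \b$, from $\phi(\P) = \P$ and $\phi(F_m) = F_m$ one gets $\phi(\P \cap F_m) = \P \cap F_m$, and since $\phi$ restricts to $\a$ on $F_m$ this says $\a$ preserves the bi-ordering determined by $\P \cap F_m$. Thus $\a$ is order-preserving, and the identical argument applied to $F_n$ shows $\b$ is order-preserving.

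The only step requiring genuine care is the converse, and within it the verification that the free factors are honestly invariant subgroups under $\a \star \b$; once that is established, restricting both the positive cone and the automorphism is routine, using only the criterion that an automorphism preserves an ordering precisely when it fixes the positive cone setwise. I do not expect any serious obstacle, as the substantive content about extending orderings across a free product is entirely absorbed into Lemma~\ref{lem_tensor}.
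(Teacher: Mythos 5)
Your argument is correct: the forward direction is indeed an immediate application of Lemma~\ref{lem_tensor}, and your converse — restricting the invariant positive cone to the free factors $F_m$ and $F_n$, each of which is setwise invariant under $\a\star\b$ with restrictions $\a$ and $\b$ — is exactly the natural companion argument. The paper itself does not spell out a proof (it defers to the cited reference for both the lemma and the proposition), but your route is the intended one and contains no gaps.
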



There is a natural inclusion $B_m \subset B_{m+n}$ ($n \ge 1$) given by $\b \mapsto \b \otimes 1_n$, where $1_n$ is the identity braid of $B_n$. 

\begin{cor}\label{extension}
A braid $\b \in B_m$ is order-preserving if and only if $\b \otimes 1_n \in B_{m+n}$ is order-preserving.
\end{cor}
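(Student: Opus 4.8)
The plan is to obtain this as a direct specialization of the preceding Proposition, taking the second tensor factor to be trivial. Concretely, I would apply that Proposition to the two braids $\b \in B_m$ (the given braid) and $1_n \in B_n$, the identity braid on $n$ strings, so that their tensor product is exactly $\b \otimes 1_n \in B_{m+n}$. The Proposition then asserts that $\b$ and $1_n$ are both order-preserving if and only if $\b \otimes 1_n$ is order-preserving, and it remains only to dispose of the factor $1_n$.

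The single point to check is that the trivial braid $1_n$ is always order-preserving. This is immediate: under the Artin representation $1_n$ induces the identity automorphism of $F_n$, so $x^{1_n} = x$ for every $x \in F_n$, and the identity automorphism fixes the positive cone of any ordering, $\mathrm{id}(\P) = \P$, hence preserves every bi-ordering of $F_n$. Since $F_n$ is bi-orderable (Appendix~\ref{Ordering free groups}), at least one such bi-ordering exists, and therefore $1_n$ is order-preserving.

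With this observation the conjunction ``$\b$ order-preserving and $1_n$ order-preserving'' collapses to ``$\b$ order-preserving,'' and the Proposition reads $\b$ order-preserving $\iff$ $\b \otimes 1_n$ order-preserving, which is exactly the claim. There is essentially no genuine obstacle at the level of the corollary itself: all of the real content is carried by the preceding Proposition (and ultimately by Lemma~\ref{lem_tensor}). If one wished to argue without invoking the full Proposition, I would note that the forward implication follows directly from Lemma~\ref{lem_tensor} by taking $\psi = \mathrm{id}$ on $F_n$, so that a bi-ordering of $F_m$ preserved by $\b$ extends to one of $F_m \star F_n \cong F_{m+n}$ preserved by $\b \star \mathrm{id} = \b \otimes 1_n$; the converse is the nontrivial direction and is precisely where the content of the Proposition resides.
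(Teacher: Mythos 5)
Your proof is correct and matches the paper's (implicit) argument exactly: the corollary is the specialization of the preceding Proposition to the case where the second factor is the identity braid, together with the trivial observation that $1_n$ preserves every bi-ordering of $F_n$. Nothing further is needed.
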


\subsection{Do order-preserving braids form a subgroup?}

Let $OP_n \subset B_n$ denote the set of $n$-strand braids which are order-preserving.  
It is clear that $\b \in OP_n$ if and only if $\b^{-1} \in OP_n$
and that the identity braid belongs to $OP_n$.  Moreover, $P_n \subset OP_n$ by Proposition \ref{pure OP}.  It is natural to ask whether $OP_n$ forms a subgroup of $B_n$, in other words whether $OP_n$ is closed under multiplication.  For $n=2$ the answer is affirmative.  Noting that $\Delta_2^2 = \s_1^2$ we conclude from Propositions \ref{gen not OP} and \ref{pure OP} and Corollary \ref{cor_multDelta^2} that $OP_2$ consists of exactly the 2-strand braids $\s_1^k$ with $k$ even.  Therefore $OP_2$ is exactly the subgroup $P_2$.

%

\begin{prop}
For $n > 2$ the set $OP_n$ is not a subgroup of $B_n$.
\end{prop}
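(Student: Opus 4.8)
The plan is to refute closure under multiplication directly, by exhibiting two braids $\alpha,\beta \in OP_n$ whose product $\alpha\beta$ is \emph{not} order-preserving. The choice of pair is dictated by the structure already in place: products of pure braids remain pure and hence order-preserving (Proposition~\ref{pure OP}), while conjugation preserves order-preservation, so to leave $OP_n$ one should multiply a \emph{non-pure} order-preserving braid by a pure one and arrange the result to be a conjugate of a generator, which Proposition~\ref{gen not OP} tells us is never order-preserving. Accordingly I would take
$$\alpha = \s_1\s_2\s_1, \qquad \beta = \s_1^{-2}.$$

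First I would check that both factors lie in $OP_n$. The element $\s_1\s_2\s_1 \in B_n$ is the image of the type~1 periodic braid $\Delta_3 = \delta_3\s_1 \in B_3$ under the standard inclusion $B_3 \hookrightarrow B_n$; it is order-preserving by Theorem~\ref{thm_periodicBraid}(1) when $n=3$, and by Theorem~\ref{thm_periodicBraid}(1) together with Corollary~\ref{extension} when $n>3$. The element $\beta = \s_1^{-2}$ is pure (its image in $\mathcal{S}_n$ is trivial), so $\beta \in OP_n$ by Proposition~\ref{pure OP}.

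Then I would compute $\alpha\beta = (\s_1\s_2\s_1)\s_1^{-2} = \s_1\s_2\s_1^{-1}$, which is the conjugate of the generator $\s_2$ by $\s_1$. By Proposition~\ref{gen not OP} the generator $\s_2$ is not order-preserving, and since order-preservation is a conjugacy invariant (the corollary that $\b$ is order-preserving iff $\a\b\a^{-1}$ is, following Corollary~\ref{cor_multDelta^2}), neither is $\s_1\s_2\s_1^{-1}$. Hence $\alpha,\beta \in OP_n$ while $\alpha\beta \notin OP_n$, so $OP_n$ is not closed under multiplication and is therefore not a subgroup.

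I do not expect a real obstacle: every verification reduces immediately to a cited result. The only creative step — and the part I would spend the most thought on — is locating the pair $(\alpha,\beta)$, that is, recognizing that multiplying the order-preserving half-twist $\s_1\s_2\s_1$ by the order-preserving pure braid $\s_1^{-2}$ produces exactly a conjugate of a generator. The hypothesis $n > 2$ enters precisely in ensuring that such a generator $\s_2$ (hence the conjugate $\s_1\s_2\s_1^{-1}$) exists.
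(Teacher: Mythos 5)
Your proposal is correct and coincides with the paper's own proof: the same pair $\a = \s_1\s_2\s_1$, $\b = \s_1^{-2}$, the same product $\a\b = \s_1\s_2\s_1^{-1}$ conjugate to $\s_2$, and the same supporting citations (Theorem~\ref{thm_periodicBraid}(1) with Corollary~\ref{extension}, Proposition~\ref{pure OP}, and Proposition~\ref{gen not OP}). No gaps.
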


\begin{proof}
Consider the $n$-strand braids $\a = \s_1\s_2\s_1$ and $\b = \s_1^{-2}$.  Then $\a$ is order-preserving, being an extension in $B_n$ of type 1 periodic braid $\s_1\s_2\s_1 \in B_3$ (see Corollary \ref{extension}), and $\b$ is order-preserving, being a pure braid.  But 
$\a\b = \s_1\s_2\s_1^{-1}$ is not order-preserving, as it is conjugate to $\s_2$ which is not order-preserving 
by Proposition~\ref{gen not OP}. 
\end{proof}


Although not a subgroup for $n>2$, $OP_n$ is a large subset of $B_n$: 

\begin{prop}  
\label{prop_OP_n-generate}
For $n > 2$ the set $OP_n$ of order-preserving $n$-braids generates $B_n$.
\end{prop}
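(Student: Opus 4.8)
The plan is to exhibit an explicit generating set for $B_n$ consisting entirely of order-preserving braids. The most economical approach uses the squares of the standard generators. First I would recall from the Corollary following Proposition~\ref{pure OP} that each $\s_i^2 \in P_n$ is a pure braid and hence order-preserving, so all $n-1$ elements $\s_1^2, \dots, \s_{n-1}^2$ lie in $OP_n$. These alone do not generate $B_n$ (they land in $P_n$, missing the permutation part), so I need to adjoin a single further element of $OP_n$ that, together with the squares, recovers all of $B_n$. The natural candidate is a periodic braid of type~1, which is order-preserving by Theorem~\ref{thm_periodicBraid}(1).

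The key algebraic step is the observation that each generator $\s_i$ can be written as a product of an order-preserving braid and the inverse of another. Concretely, I would use an element whose square is already known to be order-preserving together with a neighbouring square. For instance, set $\gamma = \s_1 \s_2 \cdots \s_{n-1}\s_1$, the type~1 periodic braid, which lies in $OP_n$. The subgroup $\langle OP_n \rangle$ generated by $OP_n$ contains all $\s_i^2$ and the element $\gamma$. Since $OP_n$ is closed under inverses (as noted before Proposition~\ref{prop_OP_n-generate}, $\b \in OP_n \iff \b^{-1} \in OP_n$), the subgroup $\langle OP_n \rangle$ also contains $\gamma^{-1}$ and all $\s_i^{-2}$. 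The task reduces to showing that $\s_1, \dots, \s_{n-1}$ all lie in this subgroup.

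The cleanest way to finish is to reduce to small cases via Corollary~\ref{extension}: a braid $\b \in B_m$ is order-preserving precisely when $\b \otimes 1_{n-m} \in B_n$ is. Thus it suffices to observe that the type~1 braids $\s_i \s_{i+1} \s_i$, viewed within the relevant three-string block and extended by the identity, are order-preserving and generate, together with the pure squares, each individual $\s_i$. Indeed $(\s_i\s_{i+1}\s_i)(\s_i^{-2})(\s_{i+1}\s_i)^{-1}$-type manipulations express the odd-crossing generators in terms of type~1 elements and pure squares; alternatively one notes directly that $\s_i \s_{i+1} \s_i \in OP_n$ and $\s_{i+1}\s_i \s_{i+1} = \s_i\s_{i+1}\s_i$, so the generating set $\{\s_i^2\} \cup \{\s_i\s_{i+1}\s_i\}$ clearly contains enough elements: from $\s_i\s_{i+1}\s_i$ and $\s_i^{-2}$ one recovers $\s_{i+1}\s_i\s_i^{-1}\cdot\s_i = \s_{i+1}$, and inductively all generators.

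The main obstacle will be organizing this last reduction cleanly, since one must verify that the specific products land in the generated subgroup without accidentally invoking non-order-preserving intermediate factors — the point of working with the \emph{generated subgroup} (rather than with $OP_n$ as a set) is precisely that we are free to use products and inverses of order-preserving braids even when those products themselves, like $\s_i$, are not order-preserving. Once that bookkeeping is done, concluding that $\langle OP_n \rangle = B_n$ is immediate because every standard generator $\s_i$ has been exhibited as a word in elements of $OP_n$.
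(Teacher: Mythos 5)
Your overall strategy --- generating $B_n$ from the pure squares $\sigma_i^{\pm 2}$ together with type~1 periodic braids of the form $\sigma_i\sigma_{i+1}\sigma_i$ --- is essentially the paper's (the paper uses exactly $\alpha=\sigma_1\sigma_2\sigma_1$ and $\beta=\sigma_1^{-2}$), but the step where you actually extract a standard generator is wrong as written. The identity you display, $\sigma_{i+1}\sigma_i\sigma_i^{-1}\cdot\sigma_i=\sigma_{i+1}$, is false (the left side equals $\sigma_{i+1}\sigma_i$), and the honest product of your two order-preserving elements is $(\sigma_i\sigma_{i+1}\sigma_i)\sigma_i^{-2}=\sigma_i\sigma_{i+1}\sigma_i^{-1}$, which is only a \emph{conjugate} of $\sigma_{i+1}$, not $\sigma_{i+1}$ itself. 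Your alternative manipulation $(\sigma_i\sigma_{i+1}\sigma_i)(\sigma_i^{-2})(\sigma_{i+1}\sigma_i)^{-1}$ is worse: $\sigma_{i+1}\sigma_i$ is conjugate to an extension of the type~2 periodic braid $\delta_3$, hence is \emph{not} order-preserving, and it has not been shown to lie in $\langle OP_n\rangle$ at that point, so you are not entitled to use its inverse as a factor. A second, smaller problem: Corollary~\ref{extension} only places $\sigma_1\sigma_2\sigma_1\otimes 1_{n-3}$ in $OP_n$; for the middle blocks $\sigma_i\sigma_{i+1}\sigma_i$ with $i>1$ you need a further argument (e.g.\ conjugation by $\delta_n$), not that corollary alone.

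The missing ingredient in both places is the conjugation-invariance of $OP_n$ (the corollary stating that $\beta$ is order-preserving if and only if $\alpha\beta\alpha^{-1}$ is). With it the argument closes immediately, and in exactly the way the paper does it: $\sigma_1\sigma_2\sigma_1^{-1}=(\sigma_1\sigma_2\sigma_1)(\sigma_1^{-2})$ is a product of two elements of $OP_n$; conjugating this equation by any $\gamma$ carrying $\sigma_1\sigma_2\sigma_1^{-1}$ to $\sigma_2$ writes $\sigma_2$ as a product of two conjugates of order-preserving braids, which are again in $OP_n$; and since all the $\sigma_i$ are mutually conjugate, every generator is such a product. (Equivalently, in your local version: $\sigma_{i+1}\sigma_i^2=\sigma_i^{-1}(\sigma_i\sigma_{i+1}\sigma_i)\sigma_i\in OP_n$ by conjugation-invariance, and then $\sigma_{i+1}=(\sigma_{i+1}\sigma_i^2)\sigma_i^{-2}$.) So the gap is real but entirely repairable with tools already in the paper, and the repaired argument coincides with the paper's proof.
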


\begin{proof}
We saw above that $\s_1\s_2\s_1^{-1}$ is a product of braids $\a, \b \in OP_n$; therefore $\s_2$ is also a product of appropriate conjugates of $\a$ and $\b$; these conjugates are also in $OP_n$.  But all the generators $\s_i$ of $B_n$ are conjugate to each other, and therefore are also products of elements of $OP_n$.
It follows that all braids are products of elements of $OP_n$.
\end{proof}

\section{Small volume cusped hyperbolic 3-manifolds}
\label{section_hyperbolic}
It is known by Gabai-Meyerhoff-Milley~\cite{GMM09} that 
the Weeks manifold is the unique closed orientable hyperbolic $3$-manifold of smallest volume. 
Its fundamental group is not left-orderable; see Calegari-Dunfield~\cite{CD03}. 
In this section we will see that certain minimum volume orientable $n$-cusped 3-manifolds can be distinguished by orderability properties of their fundamental groups. 
We also prove that some orientable hyperbolic $n$-cusped $3$-manifolds with the smallest known volumes 
have  bi-orderable fundamental groups.  

Let $C_3$ and $C_4$ be the chain links with $3$ and $4$ components as in Figure~\ref{fig_nChainLink}(1) and (2).  
For $n \ge 5$, let $C_n$ be {\it the minimally twisted} {\it $n$-chain link}; see \cite[Section~1]{KPR12} for the 
definition of such a link. 


Figure~\ref{fig_nChainLink}(3) and (4) show $C_5$ and $C_6$.   
Let ${\Bbb W}_n$ be the $n$-fold cyclic cover  over one component of the Whitehead link complement ${\Bbb W}$. 
(See Figure~\ref{fig_2Whitehead}(2) for ${\Bbb W}_3$.) 
It seems that $S^3 \setminus C_n$ and ${\Bbb W}_{n-1}$ play an important role 
for the study of the minimal volume hyperbolic $3$-manifolds with $n$ cusps, where $n \ge 3$,  
as we shall see below.

\subsection{One cusp}

Cao-Meyerhoff~\cite{CM01} proved that 
a minimal volume orientable hyperbolic $3$-manifold with $1$ cusp is homeomorphic to either 
the complement of figure-eight knot in $S^3$ or its sibling manifold (which can be described as 5/1 Dehn surgery on one component of the Whitehead link). 
We note that the sibling manifold cannot be described as a knot complement, as its first homology group is 
$\Z \oplus \Z/5\Z$.  

Like the figure-eight knot complement, it can be described as a punctured torus bundle over $S^1$.

\begin{thm}\label{thm: one cusp}
The figure-eight knot complement has bi-orderable fundamental group.  The fundamental group of its sibling is not bi-orderable.
\end{thm}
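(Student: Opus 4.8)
The plan is to exploit the structural fact, noted just above the statement, that both the figure-eight knot complement and its sibling are punctured torus bundles over $S^1$. The fibre $\Sigma$ is a once-punctured torus, so $\pi_1(\Sigma) \cong F_2 = \langle a,b\rangle$, and each manifold group is a semidirect product $F_2 \rtimes_\phi \Z$ with $\phi$ the monodromy automorphism. By Proposition~\ref{prop_fibration-HNN}, bi-orderability of the fundamental group is equivalent to the existence of a $\phi$-invariant bi-ordering of $F_2$. Here $\phi$ is a genuine pseudo-Anosov monodromy of the fibre rather than a braid-induced symmetric automorphism, so $\phi_{ab}$ need not fix any generator and the eigenvalue criteria of Theorems~\ref{Perron-Rolfsen} and \ref{Clay-Rolfsen} are directly applicable.

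First I would compute the action $\phi_{ab}$ on $H_1(\Sigma) \cong \Z^2$. Identifying $\mathrm{Mod}(\Sigma)$ with $SL(2,\Z)$ acting on $\Z^2$, the figure-eight monodromy is represented by $M = \begin{pmatrix} 2 & 1 \\ 1 & 1 \end{pmatrix}$, with characteristic polynomial $\lambda^2 - 3\lambda + 1$ and eigenvalues $\frac{3 \pm \sqrt 5}{2}$, both real and positive. The sibling is the bundle whose monodromy acts as $-M = \begin{pmatrix} -2 & -1 \\ -1 & -1 \end{pmatrix}$, with eigenvalues $\frac{-3 \pm \sqrt 5}{2}$, both real and negative. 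As a consistency check I would verify the first homology via $H_1 \cong \Z \oplus \mathrm{coker}(\phi_{ab} - I)$: one has $\det(M - I) = -1$, giving $H_1 \cong \Z$ for the knot complement, while $\det(-M - I) = 5$ (Smith normal form $\mathrm{diag}(1,5)$) gives $H_1 \cong \Z \oplus \Z/5\Z$ for the sibling, matching the stated homology.

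The conclusion then follows at once from the two eigenvalue criteria. For the figure-eight complement both eigenvalues of $\phi_{ab}$ are real and positive, so Theorem~\ref{Perron-Rolfsen} (Perron--Rolfsen) produces a $\phi$-invariant bi-ordering of $F_2$; hence by Proposition~\ref{prop_fibration-HNN} the group is bi-orderable. For the sibling, $\phi_{ab}$ has no real positive eigenvalue, so Theorem~\ref{Clay-Rolfsen} (Clay--Rolfsen) shows that no $\phi$-invariant bi-ordering of $F_2$ can exist; hence by Proposition~\ref{prop_fibration-HNN} the group is not bi-orderable.

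The main obstacle is pinning down the correct monodromy, and in particular the sign that distinguishes the two bundles; everything else is a direct application of the two theorems. I expect the homology computation above to be the decisive sanity check, since $\mathrm{coker}(\phi_{ab} - I)$ must reproduce the known $H_1$ of each manifold, and it is precisely the sign of $M$ that flips the eigenvalues from positive to negative and thereby toggles bi-orderability.
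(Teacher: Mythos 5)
Your proposal is correct and follows essentially the same route as the paper: both manifolds are treated as once-punctured torus bundles, the monodromy matrices $\left(\begin{smallmatrix}2&1\\1&1\end{smallmatrix}\right)$ and $\left(\begin{smallmatrix}-2&-1\\-1&-1\end{smallmatrix}\right)$ are identified, and Theorems~\ref{Perron-Rolfsen} and \ref{Clay-Rolfsen} are applied to the positive and negative eigenvalue cases respectively. The only difference is cosmetic: the paper cites \cite{HMW90} for the sibling's monodromy where you instead justify it by the homology consistency check $H_1 \cong \Z \oplus \mathrm{coker}(\phi_{ab}-I)$, which is a reasonable substitute.
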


\begin{proof}
The first assertion is proved in \cite{PR03}, using the monodromy 
$\left(\begin{array}{cc}2 & 1 \\1 & 1\end{array}\right)$
associated with the fibration, which has two positive eigenvalues, and Theorem \ref{Perron-Rolfsen}.
The sibling has the monodromy $\left(\begin{array}{cc}-2 & -1 \\-1 & -1\end{array}\right)$, see \cite[Proposition~3 and Note]{HMW90}.  This has the two negative eigenvalues $(-3 \pm \sqrt{5})/2$.
By Theorem~\ref{Clay-Rolfsen}, 
the sibling has non-bi-orderable fundamental group.
\end{proof}

\subsection{Two cusps}

Agol~\cite{Agol10} proved that
the minimal volume orientable hyperbolic $3$-manifold with $2$ cusps 
is homeomorphic to either the Whitehead link complement ${\Bbb W}$ or the $(-2,3,8)$-pretzel link complement ${\Bbb W}'$.

\begin{thm}
The fundamental group of ${\Bbb W}$ is bi-orderable. 
The fundamental group of ${\Bbb W}'$ is not bi-orderable. 
\end{thm}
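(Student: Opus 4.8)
The plan is to treat the two links separately, since they require opposite conclusions, and the first one is essentially free. For the Whitehead link complement ${\Bbb W}$, there is nothing new to do: bi-orderability of $\pi_1({\Bbb W})$ is exactly the content of Theorem~\ref{thm_Whitehead}, proved in Appendix~\ref{Whitehead}. So the first assertion follows by citation, and all the real work is in showing that $\pi_1({\Bbb W}')$ is \emph{not} bi-orderable.

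For the $(-2,3,8)$-pretzel link complement ${\Bbb W}'$, my primary approach would be to exhibit ${\Bbb W}'$ as a braided link, i.e. to find a braid $\beta$ with $S^3 \setminus \mathrm{br}(\beta) \cong {\Bbb W}'$, and then reduce via Proposition~\ref{prop_OPiffBi-ord}: $\pi_1({\Bbb W}')$ is bi-orderable if and only if $\beta$ is order-preserving. It then suffices to prove that this particular $\beta$ is not order-preserving. One important caveat is that the eigenvalue obstruction of Theorem~\ref{Clay-Rolfsen} is useless on the braid fibration itself: the monodromy of $\mathrm{br}(\beta)$ is a symmetric automorphism of $F_n$, so $\beta_{ab}$ is merely a permutation of the generators and always carries the positive eigenvalue $1$. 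Hence I would argue directly, in the spirit of the proof that $\sigma_1\sigma_2^{-1}$ is not order-preserving: assume a bi-ordering of $F_n$ is $\beta$-invariant, track the orbit of a carefully chosen word $w$ under the induced automorphism, and derive a contradiction from conjugation-invariance (or produce a nontrivial finite orbit and invoke Proposition~\ref{finite orbit}). Concretely I expect ${\Bbb W}'$ to be realized by one of the pretzel braids treated in Section~\ref{subsection_Non-order-preserving}, so the desired non-order-preserving statement would follow from Corollary~\ref{cor_beta-1m}.

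An alternative route, parallel to the treatment of the figure-eight sibling in Theorem~\ref{thm: one cusp}, is to use a \emph{different} fibration of ${\Bbb W}'$ over $S^1$. Since ${\Bbb W}'$ is a link complement, any such fiber $\Sigma$ is a surface with nonempty boundary, so $\pi_1(\Sigma) \cong F_n$ is free and $\pi_1({\Bbb W}') \cong F_n \rtimes_\phi \Z$. By Proposition~\ref{prop_fibration-HNN}, bi-orderability of $\pi_1({\Bbb W}')$ is equivalent to $\phi$ preserving a bi-ordering of $F_n$, and by Theorem~\ref{Clay-Rolfsen} this would force $\phi_{ab}$ to have a positive real eigenvalue. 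So it would be enough to locate one fibration whose homological monodromy $\phi_{ab}$ (computable from the Alexander polynomial of the pretzel link, or directly from the action on $H_1(\Sigma;\Z)$) has \emph{no} positive real eigenvalue, exactly as the matrix $\bigl(\begin{smallmatrix} -2 & -1 \\ -1 & -1 \end{smallmatrix}\bigr)$ did for the sibling.

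The main obstacle, in either route, is the explicit identification step: pinning down the braid $\beta$ and then carrying out the combinatorics of the orbit argument that shows it is not order-preserving, or equivalently finding the non-braid fibration and verifying that its homological monodromy avoids positive real eigenvalues. Once ${\Bbb W}'$ is correctly identified with a concrete braid or a concrete surface bundle, the remaining verification is routine: apply Proposition~\ref{prop_OPiffBi-ord} in the first case, or Theorem~\ref{Clay-Rolfsen} together with Proposition~\ref{prop_fibration-HNN} in the second.
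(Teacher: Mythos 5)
Your approach is essentially the paper's: the first assertion is quoted from Theorem~\ref{thm_Whitehead}, and the second is obtained by realizing ${\Bbb W}'$ as a braided link, applying Proposition~\ref{prop_OPiffBi-ord}, and invoking a non-order-preserving result from Section~\ref{subsection_Non-order-preserving} proved by exactly the orbit-tracking argument you describe. The one correction is the specific citation: $\mathrm{br}(\delta_5\sigma_1^2)$ is the $(-2,3,8)$-pretzel link, so the needed input is Theorem~\ref{thm_BigEntropy} with $n=5$, $k=1$ (whose corollary covers $(-2,2k+1,2n-2)$-pretzel links), not Corollary~\ref{cor_beta-1m}, since the braids $\sigma_1^{-2k}\delta_n$ there yield $(-2,-2k+1,2n-2)$-pretzel links and never produce $(-2,3,8)$ for $k\ge 1$.
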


\begin{proof}
The first assertion follows from Theorem~\ref{thm_Whitehead}. 
We shall prove in Theorem~\ref{thm_BigEntropy} that 
$\delta_5 \sigma_1^2 \in B_5$ is not order-preserving. 
This together with Proposition~\ref{prop_OPiffBi-ord} 
implies the second assertion, since $\mathrm{br}(\delta_5 \sigma_1^2)$ is equivalent to the 
$(-2,3,8)$-pretzel link; see Figure~\ref{fig_123412pretzel}. 
\end{proof}

\begin{center}
\begin{figure}
\includegraphics[width=4.8in]{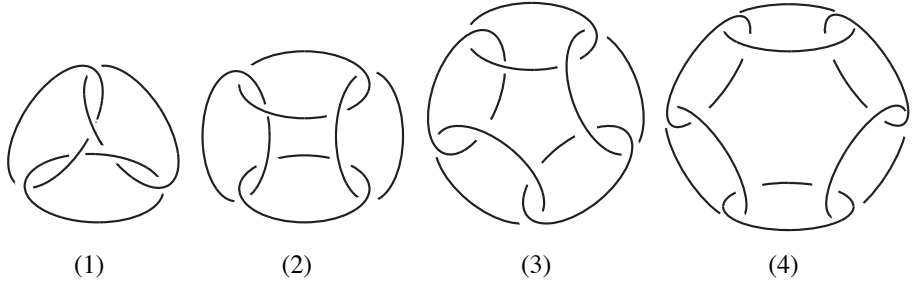}
\caption{(1) $C_3$. (2) $C_4$. (3) $C_5$. (4) $C_6$.}
\label{fig_nChainLink}
\end{figure}
\end{center}

\subsection{Four cusps} 
It is proved by 
Ken'ichi Yoshida~\cite{Yoshida13} that the minimal volume orientable hyperbolic $3$-manifold with $4$ cusps is homeomorphic to 
$S^3 \setminus C_4$.

\begin{thm}
\label{thm_C4}
The complement 
$S^3 \setminus C_4$ has bi-orderable fundamental group. 
\end{thm}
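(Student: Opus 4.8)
The plan is to exhibit $S^3\setminus C_4$ as a surface bundle over the circle whose fiber has free fundamental group, and then to show that the monodromy preserves a bi-ordering of that free group, so that Proposition~\ref{prop_fibration-HNN} gives the conclusion. Concretely, I would realize $S^3\setminus C_4$ as fibering over $S^1$ with fiber $\Sigma$ a four-punctured sphere; this is the natural guess since $C_4$ has four components, hence the bundle has four cusps, and each is most economically accounted for by a single boundary component of a planar fiber. Then $\pi_1(\Sigma)\cong F_3=\langle x_1,x_2,x_3,x_4\mid x_1x_2x_3x_4=1\rangle$ is free of rank three, and the monodromy induces an automorphism $\psi_*:F_3\to F_3$. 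Because $S^3\setminus C_4$ is hyperbolic, Thurston's hyperbolization theorem \cite{Thurston98} forces $\psi$ to be pseudo-Anosov. (Note that $C_4$ is \emph{not} a braided link, since no single chain component links all three of the others, so one cannot take the fiber to be a punctured disk; the fiber is genuinely a punctured sphere.)

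The ordering mechanism I would aim for is the strongest available. If the chosen fibration is \emph{pure}, meaning that $\psi$ fixes each of the four punctures, then each generator $x_i$ is sent by $\psi_*$ to a conjugate of itself, so $\psi_*$ is a pure symmetric automorphism of $F_3$ and its abelianization is the identity of $\mathbb{Z}^3$. In that case Proposition~\ref{prop_pure-symmetric} applies directly (equivalently, Theorem~\ref{Perron-Rolfsen} applies since every eigenvalue of $\psi_{ab}=\mathrm{id}$ equals $1$, hence is real and positive), producing a $\psi_*$-invariant bi-ordering of $F_3$. Feeding this into Proposition~\ref{prop_fibration-HNN} shows that $\pi_1(S^3\setminus C_4)\cong F_3\rtimes_{\psi_*}\mathbb{Z}$ is bi-orderable, as desired.

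The hard part will be the geometric input in the first two steps: producing the fibration and, crucially, arranging that its monodromy be pure. This is delicate precisely because $C_4$ carries an obvious cyclic symmetry, and the fibration built from that symmetry has a monodromy that cyclically permutes the punctures. Such a monodromy is exactly the situation of Proposition~\ref{finite orbit}: a cyclic permutation of the punctures creates a nontrivial finite orbit and so can \emph{fail} to be order-preserving, mirroring the behaviour of $\delta_n$. The point is therefore to select a different fibered class, lying in a fibered cone of the Thurston norm ball, whose monodromy fixes each puncture individually; since the pure mapping class group of the four-punctured sphere is free of rank two and contains pseudo-Anosov elements, such a monodromy is available in principle, and I expect the correct one to arise by tracking the fibration of $S^3\setminus C_4$ explicitly, best displayed with a figure. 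Once purity is verified, the ordering argument above is routine, so this identification is the genuine crux.
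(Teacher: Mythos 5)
Your overall strategy is the right one --- realize $\pi_1(S^3\setminus C_4)$ as $F_3\rtimes_{\psi_*}\mathbb{Z}$ for a fibration whose monodromy induces a pure symmetric automorphism, then invoke Proposition~\ref{prop_pure-symmetric} and Proposition~\ref{prop_fibration-HNN} --- and you correctly identify the crux: actually producing a fibration whose monodromy fixes each puncture. But you do not carry out that step; you only argue that a pure pseudo-Anosov monodromy is ``available in principle'' in the mapping class group of the four-punctured sphere and express the expectation that a suitable fibered class exists. The existence of an abstract pure pseudo-Anosov map on the fiber surface says nothing about whether \emph{this} manifold admits a fibration realizing one, so as written the proposal has a genuine gap exactly at the point you flag as the genuine crux.

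The paper closes that gap by running the construction in the opposite direction: start with the pure $3$-strand braid $\sigma_1^{-2}\sigma_2^{2}\in P_3$, whose braided link complement fibers over $S^1$ with fiber $D_3$ (a four-holed planar surface) and with monodromy that is pure by construction, so $\pi_1(S^3\setminus\mathrm{br}(\sigma_1^{-2}\sigma_2^{2}))$ is bi-orderable by Proposition~\ref{pure OP}; then a single (left-handed) disk twist along a disk bounded by one unknotted component exhibits a homeomorphism from this complement to $S^3\setminus C_4$. Note also that your parenthetical claim that ``one cannot take the fiber to be a punctured disk'' is misleading: it is true that $C_4$ itself is not a braided link, but its \emph{complement} is homeomorphic to a braided link complement, and that homeomorphism (via the disk twist of Section~\ref{subsection_DiskTwists}) is precisely the geometric input your argument is missing. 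If you supply that identification, the rest of your argument goes through and coincides with the paper's.
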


\begin{proof}
Consider  $\sigma_1^{-2}  \sigma_2^{2} \in P_3$. 
Then $\pi_1(S^3 \setminus \mathrm{br}(\sigma_1^{-2}  \sigma_2^{2})))$ is bi-orderable 
by Proposition~\ref{pure OP}. 
We now see that 
$S^3 \setminus \mathrm{br}(\sigma_1^{-2}  \sigma_2^{2})$ is homeomorphic to $S^3 \setminus C_4$. 
Take a disk $D$ bounded by the thickened unknot $K \subset \mathrm{br}( \sigma_1^{-2} \sigma_2^2)$, 
 see Figure~\ref{fig_4Chain}(1). 
 Then the left-handed disk twist 
 $\mathfrak{t}_D^{-1}$ sends  $\mathcal{E}(\mathrm{br}( \sigma_1^{-2} \sigma_2^2))$ 
 to the exterior of a link $L$ as shown in Figure~\ref{fig_4Chain}(2). 
 We observe that  $L$ is equivalent to $C_4$, see Figure~\ref{fig_4Chain}(2)(3).  
 Thus $\pi_1(S^3 \setminus C_4) $ is bi-orderable. 
 \end{proof}

 \begin{rem}
 \label{rem_disktwistC4}
 In the proof of Theorem~\ref{thm_C4}, 
 if we replace $\mathfrak{t}_D^{-}$ with 
 $\mathfrak{t}_D^{+}$, then  $\mathfrak{t}_D^{+}$ sends  $\mathcal{E}(\mathrm{br}( \sigma_1^{-2} \sigma_2^2))$ to 
 $\mathcal{E}(\mathrm{br}( \sigma_1^{-2} \sigma_3 \sigma_2 \sigma_3))$. 
 It follows that  $S^3 \setminus \mathrm{br}( \sigma_1^{-2} \sigma_2^2)$ is homeomorphic to $S^3 \setminus \mathrm{br}( \sigma_1^{-2} \sigma_3 \sigma_2 \sigma_3)$. 
 By Proposition~\ref{prop_OPiffBi-ord}, 
 $\sigma_1^{-2} \sigma_3 \sigma_2 \sigma_3 (= \sigma_3 \sigma_1^{-2} \sigma_2 \sigma_3 )$ 
 which is conjugate to $ \sigma_1^{-2} \sigma_2 \sigma_3^2 \in B_4 $ 
 is order-preserving. 
 \end{rem}

 \subsection{Five cusps}

It has been conjectured \cite{Agol10} that
$S^3 \setminus C_5$ has the smallest volume  
among  orientable hyperbolic $3$-manifolds with $5$ cusps.

 \begin{thm}
\label{thm_C5}
The complement 
$S^3 \setminus C_5$ has bi-orderable fundamental group. 
\end{thm}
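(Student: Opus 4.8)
The plan is to mimic exactly the strategy used for $S^3\setminus C_4$ in Theorem~\ref{thm_C4}: exhibit a braid whose braided link complement is provably bi-orderable by the tools already developed, and then show that a disk twist carries that complement homeomorphically onto $S^3\setminus C_5$. First I would look for a \emph{pure} braid $\b\in P_k$ (for suitable $k$) with the property that $\mathrm{br}(\b)$ is related to $C_5$ by one or more disk twists. Since $C_5$ is the minimally twisted $5$-chain link, a natural candidate to try is a pure braid on $3$ or $4$ strands of the form $\sigma_1^{\pm 2}\sigma_2^{\pm 2}\sigma_3^{\pm 2}$ or a similar product of squared generators, paralleling the choice $\sigma_1^{-2}\sigma_2^2$ that worked for $C_4$. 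By Proposition~\ref{pure OP}, any such pure braid has $\pi_1(S^3\setminus\mathrm{br}(\b))$ bi-orderable, so the orderability input is automatic and costs nothing; the entire content of the proof lies in the topological identification.

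The key steps, in order, would be: (i) select the pure braid $\b$ and draw $\mathrm{br}(\b)=\widehat{\b}\cup A$ explicitly; (ii) identify an unknotted component $K$ of $\mathrm{br}(\b)$ bounding a disk $D$ that meets the remaining strands in the correct number of points; (iii) apply an appropriate power of the disk twist $\mathfrak{t}_D^{\pm}$ of Section~\ref{subsection_DiskTwists}, which by construction sends $\mathcal{E}(\mathrm{br}(\b))$ homeomorphically onto the exterior of the resulting link $L$; (iv) verify by a sequence of isotopies (Reidemeister moves / planar isotopy of the chain) that $L$ is equivalent to $C_5$. Then, since disk twists are homeomorphisms of the complement, $S^3\setminus C_5$ is homeomorphic to $S^3\setminus\mathrm{br}(\b)$, whose fundamental group is bi-orderable, completing the proof via Proposition~\ref{prop_OPiffBi-ord}.

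The main obstacle I anticipate is step (iv): the diagrammatic verification that the twisted link $L$ really is the minimally twisted $5$-chain link $C_5$, as opposed to some other chain link or a chain link with the wrong twisting parity. The definition of ``minimally twisted'' (from \cite[Section~1]{KPR12}) pins down a specific alternating pattern of crossings between consecutive rings, and one must check that the clasps produced by the squared generators $\sigma_i^{\pm 2}$, after the disk twist reorganizes the axis into an additional chain component, assemble into precisely that pattern. This is a hands-on but finite check best carried out with an explicit figure (analogous to Figure~\ref{fig_4Chain}), tracking each component through the isotopy; choosing the correct signs on the exponents of the generators and the correct handedness of $\mathfrak{t}_D$ will be essential to land on $C_5$ rather than its mirror or a differently-twisted relative. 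Everything else — the bi-orderability of the starting group and the homeomorphism-invariance of the disk twist — is supplied verbatim by Proposition~\ref{pure OP}, Section~\ref{subsection_DiskTwists}, and Proposition~\ref{prop_OPiffBi-ord}.
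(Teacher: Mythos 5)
Your proposal follows essentially the same route as the paper: the paper takes the pure braid $\sigma_1^{-2}\sigma_2^{-2}\sigma_3^{-2}\in P_4$ (one of the candidates in your family), invokes Proposition~\ref{pure OP} for bi-orderability, and then applies two successive disk twists $\mathfrak{t}_D$ and $\mathfrak{t}_{D'}$ with a diagrammatic check that the resulting link is $C_5$. The only part left implicit in your plan — and which the paper itself settles by explicit figures — is the verification in your step (iv), including the fact that two disk twists (not one) are needed.
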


 \begin{proof}  
 We take $\sigma_1^{-2} \sigma_2^{-2} \sigma_3^{-2} \in P_4$. 
 Then $\pi_1(S^3 \setminus \mathrm{br} (\sigma_1^{-2} \sigma_2^{-2} \sigma_3^{-2}))$ is bi-orderable. 
To see $S^3 \setminus \mathrm{br}(\sigma_1^{-2}  \sigma_2^{-2} \sigma_3^{-2})$ is 
 homeomorphic to $S^3 \setminus C_5$, 
 we first take a disk $D$ bounded by the thickened unknot 
 $K \subset \mathrm{br}(\sigma_1^{-2}  \sigma_2^{-2} \sigma_3^{-2})$ as in Figure~\ref{fig_5Chain}(1). 
 The disk twist $\mathfrak{t}_D$ sends $\mathcal{E}(\mathrm{br}(\sigma_1^{-2}  \sigma_2^{-2} \sigma_3^{-2}))$ 
 to the exterior of a link $L$ as shown in Figure~\ref{fig_5Chain}(2). 
 Next we take a disk $D'$ bounded by the thickened unknot $K' \subset L$, see Figure~\ref{fig_5Chain}(2). 
 Then $\mathfrak{t}_{D'}$ sends $\mathcal{E}(L)$ to the exterior of a link $L'$ as in  Figure~\ref{fig_5Chain}(2). 
 We see that $L'$ is equivalent to $C_5$. 
 Thus $S^3 \setminus \mathrm{br}(\sigma_1^{-2}  \sigma_2^{-2} \sigma_3^{-2}) \simeq S^3 \setminus C_5$, and  
  $\pi_1(S^3 \setminus C_5)$ is bi-orderable.
\end{proof}

\begin{center}
\begin{figure}
\includegraphics[width=4.2in]{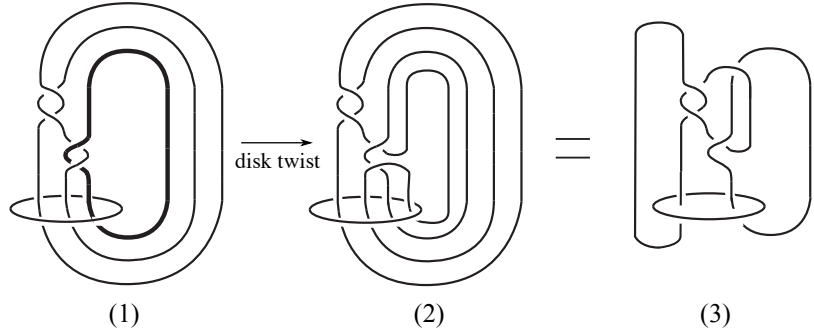}
\caption{$S^3 \setminus \mathrm{br}(\sigma_1^{-2}  \sigma_2^{2})$ is homeomorphic to $S^3 \setminus C_4$. 
(1) $\mathrm{br}(\sigma_1^{-2}  \sigma_2^{2})$. (2)(3) Links which are equivalent to $C_4$.}
\label{fig_4Chain}
\end{figure}
\end{center}

\begin{center}
\begin{figure}
\includegraphics[width=5in]{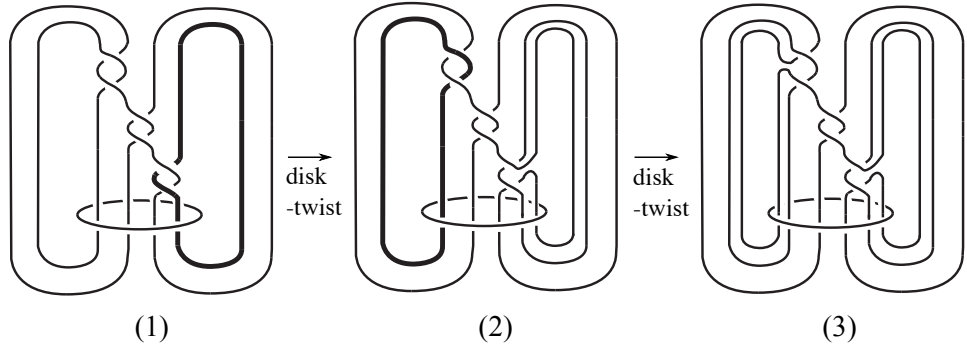}
\caption{$S^3 \setminus \mathrm{br}(\sigma_1^{-2}  \sigma_2^{-2} \sigma_3^{-2})$ is homeomorphic to $S^3 \setminus C_5$. 
(1) $ \mathrm{br}(\sigma_1^{-2}  \sigma_2^{-2} \sigma_3^{-2})$. 
(3) Link which is equivalent to $C_5$.}
\label{fig_5Chain}
\end{figure}
\end{center}

\subsection{Three cusps} 
 
The $3$-chain link complement $N= S^3 \setminus C_3$, which Gordon and Wu \cite{GW99} named the {\it magic manifold}, has the smallest known volume  
among orientable hyperbolic $3$-manifolds with $3$ cusps. 
The magic manifold $N$ is homeomorphic to $ S^3 \setminus \mathrm{br}(\sigma_1^2 \sigma_2^{-1})$, 
where $\sigma_1^2 \sigma_2^{-1} \in B_3$. 
To see this, we take a disk $D$ bounded by the thickened unknot $K \subset \mathrm{br}(\sigma_1^2 \sigma_2^{-1})$ as shown in Figure~\ref{fig_3Chain}(1). 
Then $\mathfrak{t}^{-1}_D$ sends $\mathcal{E}(\mathrm{br}(\sigma_1^2 \sigma_2^{-1}))$ 
to the exterior of a link $L$ as shown in Figure~\ref{fig_3Chain}(2). 
Since $L$ is equivalent to $C_3$, it follows that $ S^3 \setminus \mathrm{br}(\sigma_1^2 \sigma_2^{-1})$ is homeomorphic to $N$. 
We note that 
the $2$nd power $(\sigma_1^2 \sigma_2^{-1})^2  $ is a pure braid. 
Hence $\pi_1(S^3 \setminus \mathrm{br}((\sigma_1^2 \sigma_2^{-1})^2))$ is bi-orderable, 
and it has index $2$ in $\pi_1(N)$.

\begin{ques}
\label{ques_magic}
Is  $\pi_1(N)$ bi-orderable? 
In other words is $\sigma_1^2 \sigma_2^{-1} \in B_3$ order-preserving? 
\end{ques}

\begin{center}
\begin{figure}
\includegraphics[width=3in]{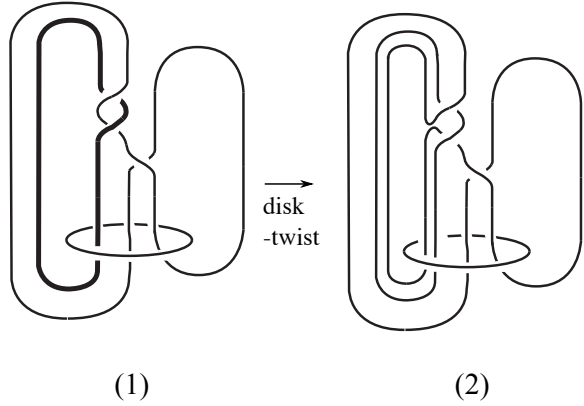}
\caption{$S^3 \setminus \mathrm{br}(\sigma_1^2 \sigma_2^{-1})$ is homeomorphic to $S^3 \setminus C_3$. 
(1) $\mathrm{br}(\sigma_1^2 \sigma_2^{-1})$. (2) Link which is equivalent to $C_3$.} 
\label{fig_3Chain}
\end{figure}
\end{center}

\subsection{$n$ cusps for $n \ge 6$} 

It is a conjecture by Agol~\cite{Agol10} that  for  $n \le 10$, 
the minimally twisted $n$ chain link complement $S^3 \setminus C_n$ has the minimal volume among orientable hyperbolic $3$-manifold with $n$ cusps. 

The following theorem follows from Lemmas~\ref{lem_L1} and \ref{prop_C6L1} below. 

\begin{thm}
\label{thm_C6}
The complement 
$S^3 \setminus C_6$ has bi-orderable fundamental group. 
\end{thm}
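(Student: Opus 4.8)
The plan is to mimic precisely the strategy used in the proofs of Theorems~\ref{thm_C4} and \ref{thm_C5}: realize $S^3 \setminus C_6$ as the complement of a braided link $\mathrm{br}(\beta)$ for a suitable \emph{pure} braid $\beta$, and then invoke Proposition~\ref{pure OP} together with Proposition~\ref{prop_OPiffBi-ord}. Since the theorem is stated as a consequence of Lemmas~\ref{lem_L1} and \ref{prop_C6L1}, my first step is to identify the bridging object: I expect Lemma~\ref{lem_L1} to produce a pure braid (the natural candidate by analogy with the $C_4$ and $C_5$ cases would be $\sigma_1^{-2}\sigma_2^{-2}\sigma_3^{-2}\sigma_4^{-2} \in P_5$, whose braided link has bi-orderable group automatically by Proposition~\ref{pure OP}), while Lemma~\ref{prop_C6L1} establishes that the braided link of that pure braid is homeomorphic, via a sequence of disk twists, to the link $C_6$.

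First I would state and use Lemma~\ref{lem_L1} to exhibit an explicit pure braid $\beta \in P_n$ (for the appropriate $n$, presumably $n=5$) with $\pi_1(S^3 \setminus \mathrm{br}(\beta))$ bi-orderable; by Proposition~\ref{pure OP} this is immediate because a pure braid induces a pure symmetric automorphism of $F_n$, which by Proposition~\ref{prop_pure-symmetric} preserves every standard ordering. Next I would invoke Lemma~\ref{prop_C6L1}, which should assert that $S^3 \setminus \mathrm{br}(\beta)$ is homeomorphic to $S^3 \setminus C_6$. The homeomorphism is built by repeatedly applying the disk-twist operation $\mathfrak{t}_D^{\pm}$ from Section~\ref{subsection_DiskTwists}: each disk twist about a disk bounded by an unknotted component of the link is a homeomorphism of the link exterior (it only changes the gluing, not the ambient manifold's homeomorphism type), so $\mathcal{E}(\mathrm{br}(\beta)) \cong \mathcal{E}(C_6)$ and hence $\pi_1(S^3 \setminus C_6)$ is bi-orderable.

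Concretely, the proof is then a two-line deduction once the lemmas are in hand: combining the bi-orderability of $\pi_1(S^3 \setminus \mathrm{br}(\beta))$ from Lemma~\ref{lem_L1} with the homeomorphism $S^3 \setminus \mathrm{br}(\beta) \simeq S^3 \setminus C_6$ from Lemma~\ref{prop_C6L1}, and using that bi-orderability of the fundamental group is a topological invariant, we conclude that $\pi_1(S^3 \setminus C_6)$ is bi-orderable. This completes the proof.

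The main obstacle is entirely geometric rather than algebraic and lives in Lemma~\ref{prop_C6L1}: verifying that the chosen sequence of disk twists actually carries $\mathrm{br}(\beta)$ to $C_6$. This requires careful bookkeeping of how each twist $\mathfrak{t}_D^{m}$ inserts a full-twist braid $\Delta_m^2$ on the $m$ strands passing through $D$ (as described in Section~\ref{subsection_DiskTwists}), and then recognizing the resulting link as the minimally twisted $6$-chain link through a sequence of isotopies. As in the $C_5$ case, more than one disk twist may be needed, and the delicate point is tracking which unknotted sublink to twist about at each stage and confirming the final diagram matches the defining picture of $C_6$; once the diagrammatic identification is secured, the orderability conclusion follows formally.
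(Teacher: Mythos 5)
Your formal skeleton is the same as the paper's: take the pure braid $\beta=\sigma_1^{-2}\sigma_2^{-2}\sigma_3^{-2}\sigma_4^{-2}\in P_5$ (you guessed it exactly), get bi-orderability of $\pi_1(S^3\setminus\mathrm{br}(\beta))$ for free from Proposition~\ref{pure OP}, and transfer it to $\pi_1(S^3\setminus C_6)$ through a homeomorphism of complements. The reduction to the two lemmas and the final two-line deduction are fine.

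The gap is in what you assume Lemma~\ref{prop_C6L1} says and how it is proved. You predict that the whole geometric identification $S^3\setminus\mathrm{br}(\beta)\simeq S^3\setminus C_6$ is carried out by a sequence of disk twists, and you describe the remaining work as bookkeeping of full twists $\Delta_m^2$, as in the $C_4$ and $C_5$ cases. That is not how the $C_6$ case goes, and this is precisely where the new idea lives. Disk twists (two of them, about the disks bounded by the closures of the first and last strings) only take $\mathrm{br}(\beta)$ to an intermediate link, the $6$-circle link $L_1$; that is the content of Lemma~\ref{lem_L1}, which you conflate with the pure-braid step. The passage from $S^3\setminus L_1$ to $S^3\setminus C_6$ in Lemma~\ref{prop_C6L1} is not a disk-twist computation at all: one exhibits four embedded $3$-punctured spheres in each of $\mathcal{E}(L_1)$ and $\mathcal{E}(C_6)$, observes that each exterior is the double of a piece ($U$, resp.\ $U'$) along those spheres, shows $U\cong U'$ by an explicit deformation, and invokes uniqueness of doubles to conclude $\mathcal{E}(L_1)\cong\mathcal{E}(C_6)$. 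This doubling argument (which the authors credit to Ken'ichi Yoshida) is the essential missing ingredient in your plan; the disk-twist strategy you propose for bridging $\mathrm{br}(\beta)$ and $C_6$ directly is exactly the step you flag as ``the main obstacle,'' and there is no indication it can be completed in that form.
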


Let $L_1$ be the $6$-circle link as shown in Figure~\ref{fig_11223344}(2).

\begin{center}
\begin{figure}
\includegraphics[width=3.5in]{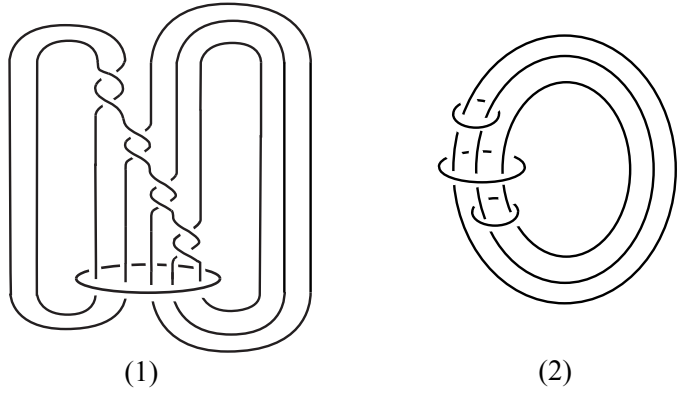}
\caption{(1) $\mathrm{br}(\sigma_1^{-2} \sigma_2^{-2} \sigma_3^{-2} \sigma_4^{-2})$. 
(2) The $6$-circled link $L_1$. 
The complements of these links are homeomorphic to each other.} 
\label{fig_11223344}
\end{figure}
\end{center}

\begin{lem}
\label{lem_L1}
Let $\beta =  \sigma_1^{-2} \sigma_2^{-2} \sigma_3^{-2} \sigma_4^{-2} \in P_5$. 
Then $S^3 \setminus L_1$ is homeomorphic to 
$S^3 \setminus \mathrm{br}(\beta)$, and 
$\pi_1(S^3 \setminus L_1)$ is bi-orderable. 
\end{lem}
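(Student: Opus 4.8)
The plan is to prove the two assertions separately, reducing the bi-orderability statement to the pure-braid machinery already in hand and handling the homeomorphism by the disk-twist technique used for $C_4$ and $C_5$. First I would observe that $\beta = \sigma_1^{-2}\sigma_2^{-2}\sigma_3^{-2}\sigma_4^{-2}$ is a product of (inverse) squares of the generators, hence a pure braid, $\beta \in P_5$. By Proposition~\ref{pure OP} every pure braid is order-preserving, so $\beta$ is order-preserving, and then Proposition~\ref{prop_OPiffBi-ord} yields that $\pi_1(S^3 \setminus \mathrm{br}(\beta))$ is bi-orderable. Thus the second assertion of the lemma follows at once from the first, since homeomorphic spaces have isomorphic fundamental groups.

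For the homeomorphism $S^3 \setminus L_1 \cong S^3 \setminus \mathrm{br}(\beta)$, I would follow the same strategy as in the proofs of Theorems~\ref{thm_C4} and \ref{thm_C5}. Since $\beta$ is a pure $5$-braid, $\mathrm{br}(\beta) = \widehat{\beta} \cup A$ has six components (the five closed strands together with the axis $A$), matching the six circles of $L_1$. I would identify an unknotted component of $\mathrm{br}(\beta)$, take a disk $D$ bounded by its longitude, and apply an appropriate disk twist $\mathfrak{t}_D^{\pm 1}$ as defined in Section~\ref{subsection_DiskTwists}; each such twist is a homeomorphism of the link exterior that converts $\mathrm{br}(\beta)$ into a new link without changing the complement up to homeomorphism. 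Reading off the effect of these twists from Figure~\ref{fig_11223344}, I would verify that the resulting link is isotopic to $L_1$, so that $S^3 \setminus L_1 \cong S^3 \setminus \mathrm{br}(\beta)$.

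The main obstacle is precisely this last figure-chasing step: confirming that the disk twists carry the diagram of $\mathrm{br}(\beta)$ to that of $L_1$ is a careful diagrammatic isotopy argument that cannot be shortcut, and one must keep track of which component each twist acts on and of how many strands pass through each disk, so that the full twists $\Delta_m^2$ added near $D$ match the crossings appearing in $L_1$. Once this homeomorphism is established, combining it with the bi-orderability of $\pi_1(S^3 \setminus \mathrm{br}(\beta))$ completes the proof.
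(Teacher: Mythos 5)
Your proposal matches the paper's proof: the paper likewise notes that $\beta$ is pure (hence $\pi_1(S^3\setminus\mathrm{br}(\beta))$ is bi-orderable by Propositions~\ref{pure OP} and \ref{prop_OPiffBi-ord}) and establishes the homeomorphism by applying disk twists --- specifically $\mathfrak{t}_D$ and $\mathfrak{t}_{D'}$ about the disks bounded by the closures of the first and last strings --- and checking diagrammatically that the result is $L_1$. The only difference is that the paper pins down exactly which two components are twisted, whereas you leave that to the figure-chasing step you correctly identify as the crux.
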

 
\begin{proof}
We consider the two trivial components $K$, $K' \subset  \mathrm{br}(\beta)$
bounded by the closure of the first string and the closure of the last string. 
Let $D$ and $D'$ be the disk bounded by $K$ and $K'$. 
We do the disk twists $\mathfrak{t}_D$ and $\mathfrak{t}_{D'}$. 
Then the resulting link is equivalent to $L_1$. 
Thus $S^3 \setminus  \mathrm{br}(\beta)$ is homeomorphic to $S^3 \setminus L_1$. 
Since $\beta$ is a pure braid, this implies that $\pi_1(S^3 \setminus L_1)$ is bi-orderable. 
\end{proof}

We thank Ken'ichi Yoshida who conveys the following argument to the authors. 

\begin{lem}
\label{prop_C6L1}
$S^3 \setminus L_1$  is homeomorphic to $S^3 \setminus C_6$. 
\end{lem}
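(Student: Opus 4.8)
The plan is to prove the homeomorphism by exhibiting an explicit ambient isotopy in $S^3$ carrying the $6$-circle link $L_1$ onto the minimally twisted $6$-chain link $C_6$; since an isotopy of links induces a homeomorphism of their complements, this suffices (and in fact proves the stronger statement that the links themselves are equivalent, matching the pattern used for $C_4$ and $C_5$ in Theorems~\ref{thm_C4} and \ref{thm_C5}). First I would record clean planar diagrams for both links: for $C_6$ the standard necklace of six rings in which consecutive clasps alternate in sign, as in Figure~\ref{fig_nChainLink}(4), and for $L_1$ the diagram of Figure~\ref{fig_11223344}(2) produced from $\mathrm{br}(\beta)$, with $\beta = \sigma_1^{-2}\sigma_2^{-2}\sigma_3^{-2}\sigma_4^{-2}$, by the two disk twists $\mathfrak{t}_D$ and $\mathfrak{t}_{D'}$ of Lemma~\ref{lem_L1}.

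Next I would read off the six components of $L_1$ and the incidence pattern of their clasps directly from the figure. The symmetry of the defining braid $\beta$, together with the braid axis, endows $\mathrm{br}(\beta)$ with an evident cyclic-type symmetry, so I expect the components of $L_1$ to already be organized as a closed chain. The key step is then to show, via a sequence of Reidemeister moves and flypes carried out one clasp at a time, that each clasp of $L_1$ can be normalized into the alternating form prescribed by the minimally twisted chain, and that the cyclic closure of the chain agrees with that of $C_6$. Here the local crossing data is well controlled, because each disk twist only inserts a full twist $\Delta_m^2$ along its twisting disk, so the modifications are localized near the disks $D$ and $D'$.

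The hard part will be the bookkeeping required to certify the \emph{minimally twisted} condition: I must verify that the clasp signs of $L_1$ genuinely alternate in the pattern of $C_6$, rather than realizing some other twisting pattern, since distinct patterns can yield non-homeomorphic chain-link complements. To guard against a diagrammatic slip, I would cross-check the candidate isotopy against a computed invariant, for instance by confirming that $S^3 \setminus L_1$ and $S^3 \setminus C_6$ share the same hyperbolic volume and the same canonical cell decomposition (e.g.\ via SnapPy), which pins down the homeomorphism type unambiguously. Once the diagram of $L_1$ has been deformed into the standard diagram of $C_6$, the equivalence of the links, and hence the homeomorphism of their complements, follows at once.
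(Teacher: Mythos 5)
Your plan diverges fundamentally from the paper's proof, and the divergence exposes a genuine gap. The paper does \emph{not} show that $L_1$ and $C_6$ are equivalent links. Instead it exhibits four essential $3$-punctured spheres embedded in $\mathcal{E}(L_1)$ and four in $\mathcal{E}(C_6)$, observes that each exterior is the double of a compact piece ($U$, resp.\ $U'$) along those spheres, shows $U \cong U'$ by an explicit deformation, and concludes $\mathcal{E}(L_1) \cong \mathcal{E}(C_6)$ because the double of a manifold is determined by the piece being doubled. This cut-and-reglue homeomorphism has no reason to extend to $S^3$, and the authors' decision to abandon the purely diagrammatic method they used for $C_4$ and $C_5$ (Theorems~\ref{thm_C4} and \ref{thm_C5}) in favor of this doubling argument is a strong signal that no ambient isotopy carrying $L_1$ to $C_6$ is available. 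Your opening assumption --- that the lemma should be proved by establishing the ``stronger statement that the links themselves are equivalent'' --- is therefore the weak point: you are committing to proving something strictly stronger than the lemma, which may well be false, and the entire Reidemeister/flype program collapses if it is.

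Your proposed safeguard does not repair this. Confirming via SnapPy that the two complements share a hyperbolic volume and canonical cell decomposition certifies only that the \emph{complements} are homeomorphic; it gives no information about whether the links are isotopic in $S^3$, so it cannot validate the isotopy you intend to construct, and if used on its own it replaces the proof with a computation rather than supplementing it. If you want a proof in the spirit of the paper, the idea you are missing is to look for essential $3$-punctured spheres (equivalently, the obvious spanning disks of unknotted components punctured by other components) in both link exteriors, cut along them, and compare the resulting pieces --- accepting from the outset that the homeomorphism you build will be between complements only, not between pairs $(S^3, L)$.
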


\begin{proof}
We take four $3$-punctured spheres embedded in the exterior $\mathcal{E}(L_1)$ 
as shown in Figure~\ref{fig_yosida}(2). 
The four $3$-punctured spheres are also embedded in $\mathcal{E}(C_6)$ 
as shown in Figure~\ref{fig_6Chain}(2). 
Let $U$ (resp. $U'$) be a subset of $\mathcal{E}(L_1)$ (resp. a subset of $\mathcal{E}(C_6)$) bounded by these  $3$-punctured spheres. 
Note that $\mathcal{E}(L_1)$ (resp. $\mathcal{E}(C_6)$) are the double of $U$ (resp. $U'$) with respect to the four 
$3$-punctured spheres, see Figure~\ref{fig_yosida} and Figure~\ref{fig_6Chain}(3). 
We now see that $U$ is homeomorphic to $U'$. 
This is  enough to prove the lemma since the double of a manifold is uniquely determined. 

We push the bottom shaded $3$-punctured sphere in $U$ as shown in Figure~\ref{fig_yosida}(3), 
and  deform it into the shaded $3$-punctured sphere as in Figure~\ref{fig_yosida}(6). 
(The middle red colored  annulus in (3) is modified into the bottom red colored  annulus in (6).)
As a result, we get $U'$. 
\end{proof}

\begin{center}
\begin{figure}
\includegraphics[width=4.5in]{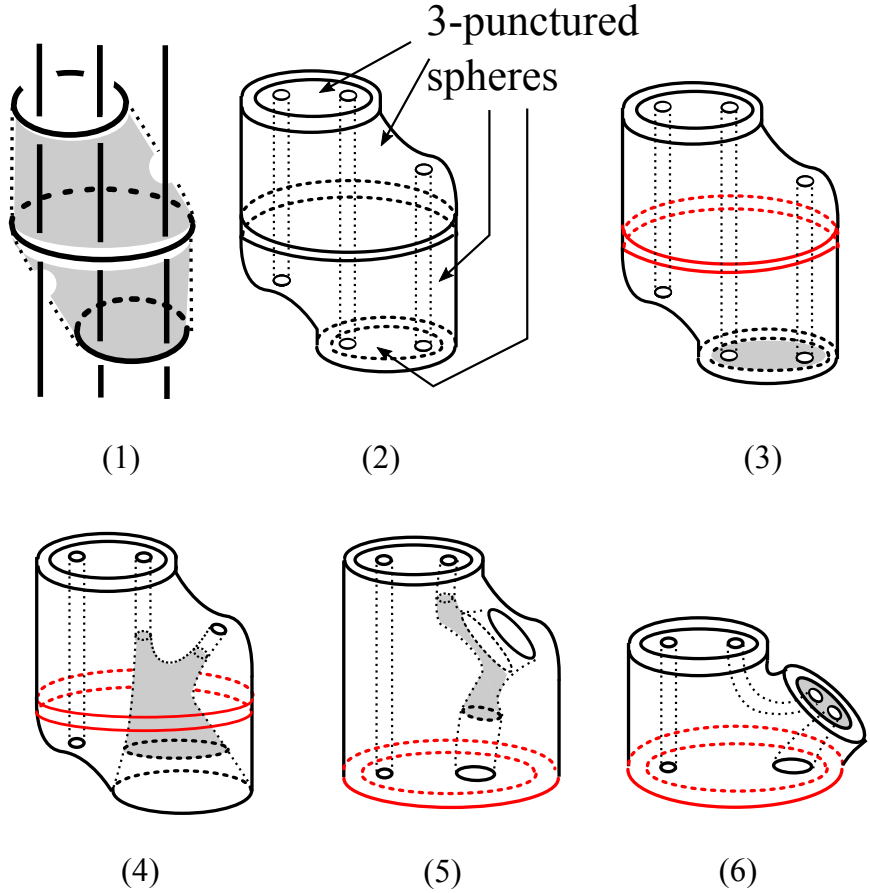}
\caption{(1) (A part of) $L_1$. 
(2) A subset $U \subset \mathcal{E}(L_1)$ bounded by four $3$-punctured spheres. 
Figures~(3)--(6) show a modification of $U$ into $U'$. 
(cf. Figure~\ref{fig_6Chain}(2).)} 
\label{fig_yosida}
\end{figure}
\end{center}

\begin{center}
\begin{figure}
\includegraphics[width=4in]{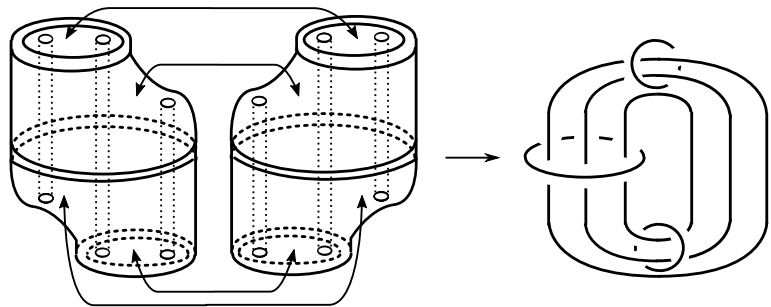}
\caption{The double of $U$  with respect to the four 
$3$-punctured spheres is homeomorphic to $\mathcal{E}(L_1)$.} 
\label{fig_double}
\end{figure}
\end{center}

\begin{center}
\begin{figure}
\includegraphics[width=4.5in]{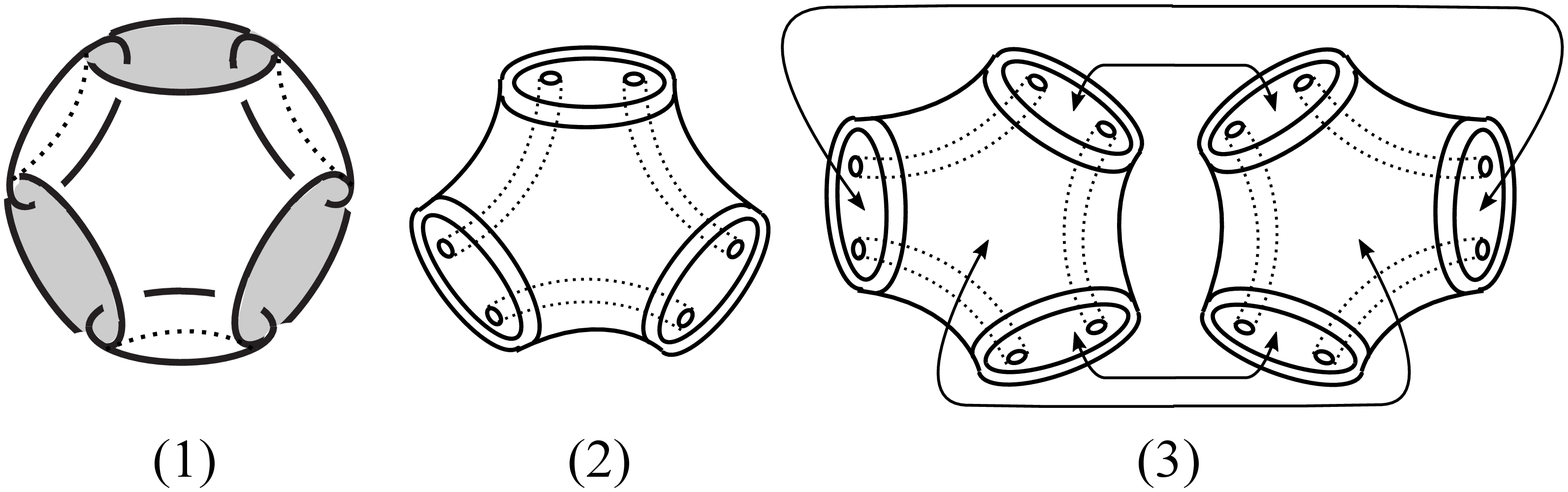}
\caption{(1) $C_6$. 
(2) A subset $U'\subset \mathcal{E}(C_6)$ bounded by four $3$-punctured spheres. 
(cf. Figure~\ref{fig_yosida}(6).) 
(3) The double of $U'$  with respect to the four 
$3$-punctured spheres is homeomorphic to $\mathcal{E}(C_6)$.} 
\label{fig_6Chain}
\end{figure}
\end{center}


Kaiser-Purcell-Rollins proved that the volume of ${\Bbb W}_{n-1}$ is strictly smaller than that of $S^3 \setminus C_n$ 
if $12 \le n \le 25$ or $n \ge 60$; see \cite[Theorems~1.1, 4.1]{KPR12}. 
At the time of this writing, it seems that 
${\Bbb W}_{n-1}$ has the smallest known volume among orientable hyperbolic $3$-manifolds with $n$ cusps if $n \ge 11$. 
(See also Table~1 in \cite{KPR12}.)

\begin{prop}
\label{prop_WhiteheadCover}
The fundamental group of  ${\Bbb W}_n$  is bi-orderable for each $n \ge 2$. 
\end{prop}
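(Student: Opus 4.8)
The plan is to exploit the fact that $\mathbb{W}_n$ is, by its very construction, a connected $n$-fold covering space of the Whitehead link complement $\mathbb{W}$, so that $\pi_1(\mathbb{W}_n)$ is isomorphic to a finite-index subgroup of $\pi_1(\mathbb{W})$. The bi-orderability of $\pi_1(\mathbb{W})$ itself is supplied by Theorem~\ref{thm_Whitehead}. The proposition then follows from the elementary fact that bi-orderability is inherited by subgroups: if $(G,<)$ is bi-ordered and $H \le G$, then the restriction of $<$ to $H$ is a strict total order invariant under both left and right multiplication by elements of $H$, hence a bi-ordering of $H$. Phrased through Propositions~\ref{positive cone} and~\ref{prop_BO-criterion}, the set $\mathcal{P}_H := \mathcal{P} \cap H$ is a positive cone for $H$ with $h^{-1}\mathcal{P}_H h = \mathcal{P}_H$ for all $h \in H$, since these conditions already hold for $\mathcal{P}$ in $G$.

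First I would make the covering precise. Writing $\mathbb{W} = S^3 \setminus (W_1 \cup W_2)$ with $W_1$ the distinguished component, the ``cyclic cover over one component'' is the cover associated to the composite homomorphism $\pi_1(\mathbb{W}) \to H_1(\mathbb{W}) \cong \Z^2 \to \Z \to \Z/n\Z$, where the middle map records linking number with $W_1$ and the last is reduction mod $n$. Since a meridian of $W_1$ maps to a generator of $\Z/n\Z$, this homomorphism is surjective for every $n \ge 2$; its kernel therefore has index $n$ and the associated cover $\mathbb{W}_n$ is connected. By covering space theory the induced map $p_* : \pi_1(\mathbb{W}_n) \to \pi_1(\mathbb{W})$ is injective with image exactly this index-$n$ kernel. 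Combining the two steps, $\pi_1(\mathbb{W}_n) \cong p_*(\pi_1(\mathbb{W}_n)) \le \pi_1(\mathbb{W})$, and restricting any bi-ordering furnished by Theorem~\ref{thm_Whitehead} yields a bi-ordering of $\pi_1(\mathbb{W}_n)$. This argument is uniform in $n$, establishing the claim for all $n \ge 2$ simultaneously.

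I expect no serious obstacle here, since the entire difficulty is front-loaded into Theorem~\ref{thm_Whitehead} and the passage to covers costs nothing: unlike many orderability-related properties, bi-orderability descends freely to subgroups. The only point that genuinely deserves care is confirming that the cover is connected, equivalently that the defining homomorphism to $\Z/n\Z$ is onto, which is immediate from the meridian computation above.
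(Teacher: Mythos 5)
Your argument is correct and is essentially identical to the paper's proof: both identify $\pi_1(\mathbb{W}_n)$ with an index-$n$ subgroup of $\pi_1(\mathbb{W})$ via the covering map and then restrict a bi-ordering supplied by Theorem~\ref{thm_Whitehead}, using the fact that bi-orderability passes to subgroups. Your additional verification that the cover is connected is a nice touch the paper leaves implicit, but the route is the same.
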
 

\begin{proof}
Let 
$p: {\Bbb W}_n \rightarrow {\Bbb W}$ be an $n$-fold cyclic covering map. 
Then  $p_*(\pi_1 ({\Bbb W}_n))$ has index $n$ in $\pi_1({\Bbb W})$. 
Since  $\pi_1 ({\Bbb W})$ is bi-orderable, so does $\pi_1 ({\Bbb W}_n)$. 
\end{proof}

\subsection{A result by M.~Baker and a question}

We learned the following theorem from Hidetoshi Masai. 

\begin{thm}[Baker~\cite{Baker02}]
\label{thm_Baker}
The $6$-circle link $L_1$ is an arithmetic link. 
Every link $L$ in $S^3$ occurs as a sublink of a link $J$ such that 
$S^3 \setminus J$ is a covering space of $S^3 \setminus L_1$. 
In particular $L$ is a sublink of an arithmetic link in $S^3$. 
\end{thm}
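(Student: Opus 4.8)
The plan is to establish the two essentially independent ingredients behind the statement — the arithmeticity of $L_1$ and a universality property of $L_1$ — and then to combine them through the fact that arithmeticity is a commensurability invariant.

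First I would prove that $L_1$ is arithmetic. By Lemma~\ref{prop_C6L1} we have $S^3\setminus L_1 \cong S^3 \setminus C_6$, so it suffices to analyze the minimally twisted chain link complement. I would exhibit an explicit decomposition of $S^3\setminus C_6$ into regular ideal hyperbolic polyhedra, which one expects to be compatible with the doubling description of $\mathcal{E}(L_1)$ along the four totally geodesic thrice-punctured spheres used in the proof of Lemma~\ref{prop_C6L1}. From such a decomposition one reads off the invariant trace field as an imaginary quadratic field $\Q(\sqrt{-d})$, checks that all traces are algebraic integers, and verifies that the associated quaternion algebra is ramified only at the infinite place; by the standard arithmeticity criterion for cusped manifolds this shows $\pi_1(S^3\setminus L_1)$ is commensurable with a Bianchi group $\mathrm{PSL}_2(\mathcal{O}_d)$, that is, $L_1$ is arithmetic.

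Next, for the universality statement, I would invoke the Hilden--Lozano--Montesinos theory of universal links. The goal is to show that $L_1$ is universal in the strong form that, for any given link $L \subset S^3$, there is a branched covering $p : S^3 \to S^3$, branched over $L_1$, whose total space is again $S^3$ and for which $L$ is a sublink of the preimage $J := p^{-1}(L_1)$. Granting this, the restriction of $p$ over the complement of the branch locus is an honest finite covering $p : S^3 \setminus J \to S^3 \setminus L_1$, and since $L\subseteq J$ the desired conclusion follows: $S^3 \setminus J$ is a finite cover of the arithmetic manifold $S^3\setminus L_1$, hence is itself arithmetic because arithmeticity is preserved under commensurability. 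Thus $J$ is an arithmetic link containing $L$ as a sublink.

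The main obstacle is the universality input. Establishing that $L_1$ is universal, and in the refined form that realizes an arbitrary $L$ inside the branch preimage $p^{-1}(L_1)$ with total space $S^3$, is the technical heart of the argument; this is where the combinatorics of branched coverings of $S^3$ and the specific symmetry of the $6$-chain link must be exploited, and it is exactly the content carried out in Baker's work. By comparison, once this universality is available the passage to arithmeticity is formal: the only structural fact needed is that a finite cover of an arithmetic hyperbolic $3$-manifold is again arithmetic.
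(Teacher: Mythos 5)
The paper does not actually prove this theorem: it is stated as an imported result attributed to Baker~\cite{Baker02}, so there is no in-paper argument to compare yours against. Judged on its own terms, your proposal is an architecture for a proof rather than a proof. The decisive step --- that $L_1$ is ``universal in the refined form'' that for every link $L$ there is a branched covering $p\colon S^3 \to S^3$, branched over $L_1$, with $L \subseteq p^{-1}(L_1)$ --- is precisely the content of the theorem's second sentence, and you explicitly leave it to Baker. Note also that this refined statement does not follow from universality in the usual Hilden--Lozano--Montesinos sense (every closed orientable $3$-manifold is a branched cover of $S^3$ branched over the link): that notion constrains the total space of the cover, not which links can be realized inside the preimage of the branch locus; and the standard universal links in that theory are the Whitehead link, the Borromean rings, etc., not $L_1$, so even the unrefined universality of $L_1$ would require a separate argument (say, by exhibiting a branched covering relating $L_1$ to a known universal link). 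So the heart of the theorem is untouched.

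The parts you do supply are sound in outline. Arithmeticity of $S^3\setminus L_1 \cong S^3\setminus C_6$ (via Lemma~\ref{prop_C6L1}) through an ideal decomposition and a trace-field/quaternion-algebra check is the standard route, though as written it rests on a computation you only say ``one expects'' to work. The final reduction is correct and genuinely formal: if $S^3\setminus J \to S^3\setminus L_1$ is a finite covering then the two manifolds are commensurable, arithmeticity is a commensurability invariant, and hence $J$ is an arithmetic link containing $L$. But with both the universality input and the arithmeticity verification deferred, the proposal should be read as a correct statement of the shape of Baker's theorem, not as an independent proof of it.
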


Theorem~\ref{thm_Baker} and Lemma~\ref{lem_L1} immediately give us the following.

\begin{thm}
\label{thm_addingLink} 
Let $L$ be a link in $S^3$. 
Then $L$ is a sublink of a link  whose complement has bi-orderable fundamental group. 
\end{thm}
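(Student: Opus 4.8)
The plan is to combine Theorem~\ref{thm_Baker} with Lemma~\ref{lem_L1} directly, so the proof will be short. The statement asserts that an arbitrary link $L \subset S^3$ embeds as a sublink of some link whose complement has bi-orderable fundamental group. First I would invoke Theorem~\ref{thm_Baker}: given $L$, there exists a link $J$ containing $L$ as a sublink such that $S^3 \setminus J$ is a covering space of $S^3 \setminus L_1$, where $L_1$ is the $6$-circle link of Figure~\ref{fig_11223344}(2). This is the key input that reduces the problem of handling an arbitrary $L$ to understanding the single fixed manifold $S^3 \setminus L_1$.

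Next I would use Lemma~\ref{lem_L1}, which states that $\pi_1(S^3 \setminus L_1)$ is bi-orderable (it is homeomorphic to $S^3 \setminus \mathrm{br}(\beta)$ for the pure braid $\beta = \sigma_1^{-2}\sigma_2^{-2}\sigma_3^{-2}\sigma_4^{-2}$, whose complement is bi-orderable by Proposition~\ref{pure OP} together with Proposition~\ref{prop_OPiffBi-ord}). The remaining step is to transfer bi-orderability from $S^3 \setminus L_1$ to its covering space $S^3 \setminus J$. Since $S^3 \setminus J$ covers $S^3 \setminus L_1$, the covering map $p$ induces an injection $p_* : \pi_1(S^3 \setminus J) \hookrightarrow \pi_1(S^3 \setminus L_1)$ realizing $\pi_1(S^3 \setminus J)$ as a subgroup of a bi-orderable group. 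Bi-orderability passes to subgroups: if $<$ is a bi-ordering of $\pi_1(S^3 \setminus L_1)$, its restriction to the image $p_*(\pi_1(S^3 \setminus J))$ is again a bi-ordering, and hence $\pi_1(S^3 \setminus J)$ is bi-orderable. This is the same restriction-to-subgroup argument already used in the proof of Proposition~\ref{prop_WhiteheadCover}.

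Finally I would conclude by setting the desired link to be $J$: it contains $L$ as a sublink, and $S^3 \setminus J$ has bi-orderable fundamental group, which is exactly the assertion. I do not anticipate a genuine obstacle here, since both ingredients are supplied as cited results; the only point requiring a sentence of care is the passage from the covering space to its base, namely observing that a covering space corresponds to a finite- or infinite-index subgroup of the base's fundamental group and that bi-orderability is inherited by arbitrary subgroups. Everything else is bookkeeping. The proof will therefore read essentially as ``By Theorem~\ref{thm_Baker}, $L$ is a sublink of a link $J$ with $S^3 \setminus J$ covering $S^3 \setminus L_1$; by Lemma~\ref{lem_L1}, $\pi_1(S^3 \setminus L_1)$ is bi-orderable; since $\pi_1(S^3 \setminus J)$ embeds as a subgroup, it too is bi-orderable.''
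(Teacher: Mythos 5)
Your proposal is correct and follows exactly the route the paper takes: the paper simply states that Theorem~\ref{thm_Baker} and Lemma~\ref{lem_L1} ``immediately give'' the result, with the implicit step being precisely your observation that $\pi_1(S^3 \setminus J)$ embeds as a subgroup of the bi-orderable group $\pi_1(S^3 \setminus L_1)$ via the covering map and hence inherits a bi-ordering. Your write-up just makes explicit what the paper leaves to the reader.
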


The link with $4$ components shown in Figure~\ref{fig_2Whitehead}(2) 
is obtained from $C_3$ adding a trivial knot. 
Its complement  which is homeomorphic to ${\Bbb W}_3$ has bi-orderable fundamental group (Proposition~\ref{prop_WhiteheadCover}), 
although we have not decided yet $N= S^3 \setminus C_3$ has  bi-orderable fundamental group. 
We ask the following.

\begin{ques}
Let $M$ be a $3$-manifold. 
Does there exist a knot $K$ in $M$ such that 
$\pi_1(M \setminus K)$ is bi-orderable?
\end{ques}

\section{Non-order-preserving braids} 
\label{section_nonOP}

In this section, 
we give some sequences  of non-order-preserving  braids. 
Our sequences include examples of pseudo-Anosov braids with small dilatations. 
We also provide examples of non-order-preserving pseudo-Anosov braids with arbitrary large dilatations.

\subsection{Pseudo-Anosov braids with smallest dilatations} 
\label{subsection_small-dil}

Let $\beta \in B_n$ be a pseudo-Anosov braid. 
Let $\lambda(\beta)>1$ be the dilatation (i.e,  stretch factor) of the corresponding pseudo-Anosov mapping class 
$\beta \in \mathrm{Mod}(D_n)$. 
(See \cite{FM12} for the definition of dilatations, for example.)
It is known that there exists a minimum, denoted by $\delta(D_n)$ among  dilatations for all pseudo-Anosov $n$-strand braids. 
The explicit values of minimal dilatations $\delta(D_n)$'s  are determined for $3 \le n \le 8$ 
by Ko-Los-Song, Ham-Song and Lanneau-Thiffeault. 
The following $n$-strand braids realize $\delta(D_n)$. 
 
\begin{itemize}
\item $n=3$; 
$ \sigma_1^{-1} \sigma_2 \in B_3$, see \cite{Matsuoka86, Handel97} for example.

\item $n=4$; 
$ \sigma_1^{-1} \sigma_2 \sigma_3\in B_4$, see \cite{KLS02}. 
(See also \cite{HS07}.) 

\item $n=5$; 
$\delta_5 \sigma_4^{-1} \sigma_3^{-1} \in B_5$, see \cite{HS07}. 

\item $n=6$; 
$\sigma_2 \sigma_1 \sigma_2 \sigma_1 \delta_5^2 \in B_6$, see \cite{LT11}. 
 
 \item $n=7$; 
 $\sigma_4^{-2} \delta_7^2 \sim \delta_7^2 \sigma_6^{-1} \sigma_5^{-1} \in B_7$, see  \cite{LT11}. 
  
 \item $n=8$; 
 $ \sigma_2^{-1} \sigma_1^{-1} \delta_8^5 \sim 
 \delta_8^5 \sigma_7^{-1} \sigma_6^{-1} 
 \in B_8$, see  \cite{LT11}. 
\end{itemize}
Here $\b \sim \b'$ means that $\b$ is conjugate to $\b'$. 
We will prove in Section~\ref{subsection_Non-order-preserving} that 
for $3 \le n \le 8$ except $n=6$, 
the above $n$-strand braids with the smallest dilatations are not order-preserving 
(Corollary~\ref{cor_beta-1m}, Theorem~\ref{thm_small-dil}, Lemma~\ref{lem_mini8braid}). 
At the time of this writing we do not know whether the above  $6$-strand braid 
$\sigma_2 \sigma_1 \sigma_2 \sigma_1 \delta_5^2$ is order-preserving or not. 
We remark that 
Question~\ref{ques_magic} is equivalent to the one asking whether $ \sigma_2 \sigma_1 \sigma_2 \sigma_1 \delta_5^2$  is order-preserving or not, 
since $S^3 \setminus \mathrm{br}((\sigma_2 \sigma_1 \sigma_2 \sigma_1 \delta_5^2)^{-1} \Delta^2)$ is homeomorphic to the magic manifold $N$, 
see \cite[Page 39 and Corollary~3.2]{KT11}.

\subsection{Sequences of pseudo-Anosov braids} 
\label{subsection_Non-order-preserving}

\begin{thm} 
\label{thm_BigEntropy}
For $n \ge 3$ and $k \ge 1$, 
$\delta_n \sigma_1^{2k} = (\sigma_1 \sigma_2 \cdots \sigma_{n-1}) \sigma_1^{2k} \in B_n$ 
is not order-preserving. 
\end{thm}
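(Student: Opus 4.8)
The plan is to make the Artin action of $\beta=\delta_n\sigma_1^{2k}$ completely explicit and then rule out \emph{every} invariant bi-ordering, first by a cheap sign analysis of the generators and finally by a dynamical argument across the twisting region. Composing \eqref{equation_delta} with the $k$-th power of \eqref{equation_sigma}, and writing $d=x_2x_1$, one finds that $\beta$ acts by
$$x_1\mapsto x_n,\quad x_2\mapsto d^kx_1d^{-k},\quad x_3\mapsto d^kx_2d^{-k},\quad x_j\mapsto x_{j-1}\ (4\le j\le n).$$
Thus every generator is sent to a conjugate of a single generator, and the only nontrivial conjugators are the powers $d^{\pm k}$ attached to $x_2$ and $x_3$. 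By Proposition~\ref{prop_OPiffBi-ord} it suffices to show that this automorphism preserves no bi-ordering of $F_n$, so I assume a preserved bi-ordering $<$ with positive cone $\P$ and work toward a contradiction.

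First I would extract the consequences of conjugation invariance. Since $\beta(x_i)$ is always a conjugate of some $x_{\pi(i)}$, order-preservation together with sign-invariance of conjugation forces $\operatorname{sign}(x_i)=\operatorname{sign}(x_{\pi(i)})$ for all $i$; as $\pi$ is an $n$-cycle, all generators share a common sign, and I may assume $x_i>1$ for every $i$. Next I track the gap signs $g_i:=\operatorname{sign}(x_{i+1}x_i^{-1})$ (indices mod $n$). Whenever a consecutive pair $(x_{i+1},x_i)$ receives equal conjugators, $\beta$ carries $x_{i+1}x_i^{-1}$ to a conjugate of $x_{\pi(i+1)}x_{\pi(i)}^{-1}$ and order-invariance propagates the sign: this applies on the clean shift and also to the pair $(x_3,x_2)$ (common conjugator $d^k$), yielding $g_1=g_2$ and $g_3=g_4=\cdots=g_n$. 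The only pairs with \emph{unequal} conjugators are $(x_2,x_1)$ and $(x_4,x_3)$, which is exactly where the two arcs meet. If these arcs had a common sign, the gaps would give a cyclic chain $x_1<x_2<\cdots<x_n<x_1$ (or its reverse), an immediate contradiction; hence everything reduces to bridging the arcs, i.e.\ to proving $g_2=g_3$ (equivalently $g_1=g_n$).

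The bridge is the heart of the matter, and I expect it to be the main obstacle. Conjugation invariance alone cannot close it: across the twist the relevant images differ by $d^k$ — concretely $\beta(x_4x_3^{-1})=x_3\,d^kx_2^{-1}d^{-k}$, so $g_3$ compares $x_3$ with $\beta(x_3)=d^kx_2d^{-k}$, whereas $g_2$ compares $x_3$ with $x_2$, and these need not agree in an arbitrary bi-ordering. I would resolve this by a genuine dynamical argument, in the spirit of the proof that $\sigma_1\sigma_2^{-1}$ is not order-preserving and of the degenerate periodic case $k=0$, where $\delta_n$ has the nontrivial finite orbit of $x_1$ and Proposition~\ref{finite orbit} applies directly. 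Concretely, I would follow the $\beta$-orbit of the twist-gap element $w=x_4x_3^{-1}$ (for $n=3$ one uses $w=x_1x_3^{-1}$) and seek a factorization of $\beta(w)$ as a conjugate of $w$ times commutator factors built from $d$ whose signs are pinned down by $x_i>1$; such a factorization would show $\beta$ strictly monotone along the orbit of $w$ in one direction, while the analogous factorization of $\beta^{-1}(w)$ — using the identity $\beta^{-1}(d)=x_3x_2$, which I would verify first — forces monotonicity in the opposite direction, the desired contradiction.

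The difficulty is entirely in producing this explicit monotone factorization uniformly in $k$, since the conjugator $d^k$ makes the orbit grow in the pseudo-Anosov fashion; the clean-shift bookkeeping of the second paragraph is routine, and the reduction to the single bridge $g_2=g_3$ is what keeps the computation finite. I note finally that $\beta$ is non-pure, so Proposition~\ref{sym not id} already shows it preserves no \emph{standard} bi-ordering; the real content here is excluding all bi-orderings, which is precisely why the dynamical step cannot be avoided.
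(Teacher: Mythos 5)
Your computation of the Artin action is correct and matches the paper's (indeed $\phi(x_2)=(x_2x_1)^{k-1}x_2x_1x_2^{-1}(x_1^{-1}x_2^{-1})^{k-1}=d^kx_1d^{-k}$ with $d=x_2x_1$), and the sign and gap-sign bookkeeping in your second paragraph is sound. But the proof has a genuine gap exactly where you flag one: the ``bridge'' is never built. You reduce matters to proving $g_2=g_3$, correctly observe that conjugation invariance alone cannot do this, and then only \emph{describe} a dynamical argument you ``would'' carry out --- a monotone factorization of $\beta(w)$ for $w=x_4x_3^{-1}$, uniform in $k$ --- without producing it. As written, no contradiction is ever reached. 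Moreover the target $g_2=g_3$ is a somewhat misleading goal: in the hypothetical invariant ordering one actually finds (after normalizing $x_1<x_2$) that $g_1=g_2$ is positive while $g_3=\cdots=g_n$ are negative, so gap signs alone yield no contradiction; the contradiction must come from the explicit conjugators $d^{\pm k}$, not from closing the cyclic chain.

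The paper closes the gap with a short computation that your proposal is missing. From $x_1<x_2$ one gets $x_2<x_3$ (your $g_1=g_2$ step), hence $x_3=\phi^{n-2}(x_1)>x_1$ forces the orbit of $x_1$ to be increasing: $x_1<x_n<x_{n-1}<\cdots<x_3<\phi(x_3)=d^kx_2d^{-k}$. Two right-multiplications then produce incompatible inequalities. From $x_1<x_n<\phi(x_2)=d^kx_1d^{-k}$, right-multiplying by $d^{k-1}x_2$ gives $(x_1x_2)^k<(x_2x_1)^k$; from $x_2<x_3<\phi(x_3)=d^kx_2d^{-k}$, right-multiplying by $d^k$ gives $(x_2x_1)^k<(x_1x_2)^k$. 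This pair of inequalities is the entire content of the theorem beyond your setup, and it is precisely the step your proposal leaves as a plan rather than a proof.
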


If $n \ge  5$, then 
$\delta_n \sigma_1 \sigma_2 \in B_n$ is pseudo-Anosov, 
see \cite[Theorem~3.11]{HK06}. 
This claim can be proved by using the criterion by Bestvina-Handel~\cite{BH05}. 
(See also \cite[Section~2.4]{HK06}.) 
Since $\delta_n \sigma_1^2 \sim \delta_n \sigma_1 \sigma_2$, 
the braid $\delta_n \sigma_1^2$ 
is  pseudo-Anosov. 
By using the same criterion, it is not hard to see that 
$\sigma_1^{2k-2} \delta_n \sigma_1 \sigma_2 \sim \delta_n \sigma_1^{2k}$ is pseudo-Anosov for $n \ge 5$ and $k \ge 1$. 
Viewing the transition matrix associated to the pseudo-Anosov braid $\delta_n \sigma_1^{2k}$, 
we see that the largest eigenvalue of the transition matrix, 
which is equal to  $\lambda(\delta_n \sigma_1^{2k})$,  goes to $\infty$ as $n$ goes to $\infty$.

\begin{proof}[Proof of Theorem~\ref{thm_BigEntropy}]
We fix a basepoint of $\pi_1(D_n)$ in the interior of $D_n$. 
By (\ref{equation_delta}) 
we have the automorphism 
$$\delta_n: x_1 \mapsto x_n \mapsto x_{n-1} \mapsto \cdots \mapsto x_3 \mapsto x_2 \mapsto x_1.$$
We turn to braid   $\sigma_1^{2k} \in B_n$.  
By (\ref{equation_sigma})  
we have the following automorphism induced by $\sigma_1 \in B_n$ 
using the same basepoint of $\pi_1(D_n)$:  
$$\sigma_1: x_1 \mapsto x_2, \quad x_2 \mapsto x_2 x_1 x_2^{-1}, \quad x_j \mapsto x_j\ \mbox{for}\ j \ne 1,2.$$  
Computing the automorphism $\sigma_1^2$ on $F_n $, we get 
$$\sigma_1^2: x_1 \mapsto x_2 x_1 x_2^{-1}, \quad x_2 \mapsto x_2 x_1 x_2 x_1^{-1} x_2^{-1}, \quad x_j \mapsto x_j\ \mbox{for}\ j \ne 1,2.$$
In the same manner, the automorphism $\sigma_1^{2k}$ for $k \ge 1$ is given by 
\begin{eqnarray*}
\sigma_1^{2k}: x_1 &\mapsto& (x_2 x_1)^{k-1} x_2 x_1 x_2^{-1} (x_1^{-1} x_2^{-1})^{k-1}, \quad 
x_2 \mapsto (x_2 x_1)^k x_2 (x_1^{-1} x_2^{-1})^k, 
\\
x_j &\mapsto& x_j \quad \mbox{for}\ j \ne 1,2.
\end{eqnarray*}
Thus  $\delta_n \sigma_1^{2k}$ 
induces the following automorphism on $F_n$. 
\begin{eqnarray*}
\phi := \delta_n \sigma_1^{2k}: \ x_1 \mapsto x_n \mapsto x_{n-1} \mapsto \cdots \mapsto x_3 &\mapsto& (x_2 x_1)^k x_2 (x_1^{-1} x_2^{-1})^k, 
\\
x_2 &\mapsto&  (x_2 x_1)^{k-1} x_2 x_1 x_2^{-1} (x_1^{-1} x_2^{-1})^{k-1}. 
\end{eqnarray*}
We assume that  $\phi $ preserves some bi-ordering $<$ on $F_n$. 
Without loss of generality, 
we may assume that $x_1 < x_2$. 
Then we have 
$$(x_2 x_1)^{k-1} x_2 x_1 x_2^{-1} (x_1^{-1} x_2^{-1})^{k-1} = (x_2 x_1)^{k}  x_1 (x_1^{-1} x_2^{-1})^{k} 
<  (x_2 x_1)^k x_2 (x_1^{-1} x_2^{-1})^k$$ 
by conjugation invariance of the bi-ordering. 
This implies that 
$$x_2 = \phi^{-1} ((x_2 x_1)^{k-1} x_2 x_1 x_2^{-1} (x_1^{-1} x_2^{-1})^{k-1}) 
< \phi^{-1} ((x_2 x_1)^k x_2 (x_1^{-1} x_2^{-1})^k) = x_3,$$
since $\phi^{-1}$ also preserves the same bi-ordering $<$. 
Hence we have 
$x_1 < x_2 < x_3$. 
Notice that $x_3 $ is contained in the orbit $\mathcal{O}(x_1)$ under $\phi$, 
i.e, 
$$x_1 \mapsto \phi(x_1)= x_n \mapsto \cdots \mapsto x_3 \mapsto (x_2 x_1)^k x_2 (x_1^{-1} x_2^{-1})^k \mapsto \cdots$$ 
Since $x_1 < x_3$, this implies that 
the ordering $<$ is increasing on $\mathcal{O}(x_1)$, 
i.e, 
$$x_1 < \phi(x_1)= x_n < \phi^2(x_1) = x_{n-1} <  \cdots < 
x_3 < (x_2 x_1)^k x_2 (x_1^{-1} x_2^{-1})^k < \cdots$$
Notice that $x_1 < x_2$ implies that 
$$\phi(x_1)= x_n < \phi(x_2) =  (x_2 x_1)^{k-1} x_2 x_1 x_2^{-1} (x_1^{-1} x_2^{-1})^{k-1}.$$
This together with $x_1 < x_n$ tells us that 
\begin{equation} 
\label{equation_BigEntropy1}
x_1  < (x_2 x_1)^{k-1} x_2 x_1 x_2^{-1} (x_1^{-1} x_2^{-1})^{k-1}.
\end{equation}
We multiply the both sides of (\ref{equation_BigEntropy1}) by $(x_2 x_1)^{k-1} x_2$ on the right. 
$$x_1(x_2 x_1)^{k-1} x_2  < (x_2 x_1)^{k-1} x_2 x_1 x_2^{-1} (x_1^{-1} x_2^{-1})^{k-1} (x_2 x_1)^{k-1} x_2 = (x_2 x_1)^{k-1} x_2 x_1.$$
Thus $$(x_1 x_2)^k= x_1(x_2 x_1)^{k-1} x_2 < (x_2 x_1)^{k-1} x_2 x_1 = (x_2 x_1)^k.$$ 
On the other hand, since $x_2 < x_3$ and $x_3 < (x_2 x_1)^k x_2 (x_1^{-1} x_2^{-1})^k$, we have 
\begin{equation}
\label{equation_BigEntropy2} 
x_2<  (x_2 x_1)^k x_2 (x_1^{-1} x_2^{-1})^k.
\end{equation}
Multiply the both sides of (\ref{equation_BigEntropy2}) by $(x_2 x_1)^k$ on the right. 
$$x_2 (x_2 x_1)^k<  (x_2 x_1)^k x_2 (x_1^{-1} x_2^{-1})^k (x_2 x_1)^k = x_2(x_1 x_2)^k.$$ 
This implies that 
$ (x_2 x_1)^k < (x_1 x_2)^k (< (x_2 x_1)^k)$, a  contradiction. 
Thus $\delta_n \sigma_1^{2k}$ 
is not order-preserving. 
\end{proof}

We have $\mathrm{br}(\delta_n \sigma_1^{2k}) = \mathrm{br}(\sigma_1^{2k} \delta_n)$, 
and it is equivalent to the $(-2, 2k+1, 2n-2)$-pretzel link;
see Figure~\ref{fig_123412pretzel} in the case $n= 5$ and $k=1$.  
By Theorem~\ref{thm_BigEntropy} and Propsition~\ref{prop_OPiffBi-ord}, 
we have the following. 

\begin{cor} 
For each $n \ge 3$ and $k \ge 1$, 
the fundamental group of the $(-2, 2k+1, 2n-2)$-pretzel link complement is not bi-orderable. 
\end{cor}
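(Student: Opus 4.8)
The plan is to derive the statement immediately from Theorem~\ref{thm_BigEntropy} and Proposition~\ref{prop_OPiffBi-ord}, so that the only genuinely new ingredient is a link-theoretic identification. First I would invoke Theorem~\ref{thm_BigEntropy}: for every $n \ge 3$ and $k \ge 1$ the braid $\delta_n \sigma_1^{2k} \in B_n$ is not order-preserving, meaning that the automorphism of $F_n$ it induces preserves no bi-ordering.

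Next I would apply Proposition~\ref{prop_OPiffBi-ord}, which states that a braid $\beta$ is order-preserving if and only if $\pi_1(S^3 \setminus \mathrm{br}(\beta))$ is bi-orderable. Specializing to $\beta = \delta_n \sigma_1^{2k}$ and combining with the previous step shows at once that $\pi_1(S^3 \setminus \mathrm{br}(\delta_n \sigma_1^{2k}))$ is not bi-orderable.

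The remaining task, and the only step requiring real work, is to confirm that the braided link $\mathrm{br}(\delta_n \sigma_1^{2k})$ is the $(-2, 2k+1, 2n-2)$-pretzel link, so that the non-bi-orderability just obtained is exactly the assertion of the corollary. Here I would produce an explicit isotopy in $S^3$: using $\mathrm{br}(\delta_n \sigma_1^{2k}) = \mathrm{br}(\sigma_1^{2k}\delta_n)$, I would flatten the twist block $\sigma_1^{2k}$ and unwind the cyclic shift $\delta_n$ against the braid axis $A$ until the union $\widehat{\delta_n\sigma_1^{2k}} \cup A$ is displayed as three twisted bands realizing the pretzel parameters $-2$, $2k+1$ and $2n-2$, as drawn in Figure~\ref{fig_123412pretzel} for the representative case $n=5$, $k=1$. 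This diagrammatic verification is the main obstacle; once it is in hand, the two complements are homeomorphic, and the non-bi-orderability of $\pi_1(S^3 \setminus \mathrm{br}(\delta_n\sigma_1^{2k}))$ transfers directly to $\pi_1$ of the $(-2,2k+1,2n-2)$-pretzel link complement, completing the proof.
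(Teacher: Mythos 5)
Your proposal is correct and follows exactly the paper's own route: Theorem~\ref{thm_BigEntropy} plus Proposition~\ref{prop_OPiffBi-ord} give non-bi-orderability of $\pi_1(S^3 \setminus \mathrm{br}(\delta_n\sigma_1^{2k}))$, and the identification of $\mathrm{br}(\delta_n\sigma_1^{2k}) = \mathrm{br}(\sigma_1^{2k}\delta_n)$ with the $(-2,2k+1,2n-2)$-pretzel link is carried out diagrammatically as in Figure~\ref{fig_123412pretzel}. No substantive difference from the paper's argument.
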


We turn to another sequence of braids. 

\begin{thm}
\label{thm_BigEntropy2}
Let $n \ge 3$ and $k \ge 1$. 
Then $\delta_n^{n-1} \sigma_1^{2k} = (\sigma_1 \sigma_2 \cdots \sigma_{n-1})^{n-1} \sigma_1^{2k} \in B_n$ 
is not order-preserving. 
\end{thm}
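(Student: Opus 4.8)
The plan is to peel off a full twist and then to reproduce, in a slightly rearranged form, the orbit–monotonicity argument used in the proof of Theorem~\ref{thm_BigEntropy}. First I would exploit the centre of $B_n$: since $\Delta^2 = \delta_n^n$, we have $\delta_n^{n-1} = \delta_n^{-1}\Delta^2$, and because $\Delta^2$ is central we may write $\delta_n^{n-1}\sigma_1^{2k} = (\delta_n^{-1}\sigma_1^{2k})\Delta^2$. By Corollary~\ref{cor_multDelta^2} this braid is order-preserving if and only if $\psi := \delta_n^{-1}\sigma_1^{2k}$ is, so it suffices to show that $\psi$ is not order-preserving.

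Next I would compute the induced automorphism of $F_n$. By the inverse of (\ref{equation_delta}), $\delta_n^{-1}$ is the cyclic shift $x_1 \mapsto x_2 \mapsto \cdots \mapsto x_n \mapsto x_1$, and from (\ref{equation_sigma}) one checks that $\sigma_1^{2k}$ acts on $\langle x_1, x_2\rangle$ simply as conjugation $x \mapsto w^k x w^{-k}$ by $w := x_2 x_1$ (and trivially on the other generators). Composing, $\psi$ is
$$\psi:\ x_1 \mapsto w^k x_2 w^{-k}, \qquad x_2 \mapsto x_3 \mapsto \cdots \mapsto x_n \mapsto w^k x_1 w^{-k}.$$
Thus $x_2,\dots,x_n$ lie on one long $\psi$-orbit while $x_1$ starts another, and the two special images $w^k x_1 w^{-k}$ and $w^k x_2 w^{-k}$ are conjugates of $x_1$ and $x_2$ by the \emph{same} element $w^k$.

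Then I would argue by contradiction, assuming $<$ is a $\psi$-invariant bi-ordering. The key leverage is conjugation invariance together with the shared conjugator $w^k$: applying $\psi^{-1}$ to $w^k x_1 w^{-k}$ vs $w^k x_2 w^{-k}$ shows that the comparison of $x_n$ with $x_1$ mirrors that of $x_1$ with $x_2$. Concretely, assuming $x_1 < x_2$ yields $x_n < x_1 < x_2$; since $x_n = \psi^{n-2}(x_2)$ then already lies below $x_2$ and the orbit of $x_2$ is infinite by Proposition~\ref{finite orbit}, the orbit must be strictly decreasing, $x_2 > x_3 > \cdots > x_n > w^k x_1 w^{-k}$. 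Combining $w^k x_1 w^{-k} < x_n < x_1$ gives $w^k x_1 w^{-k} < x_1$, and combining $w^k x_2 w^{-k} < x_3 < x_2$ (the first inequality from applying $\psi$ to $x_1 < x_2$) gives $w^k x_2 w^{-k} < x_2$. Right-multiplying each by $w^k$ and cancelling collapses these to the incompatible pair $(x_2 x_1)^k < (x_1 x_2)^k$ and $(x_1 x_2)^k < (x_2 x_1)^k$, exactly as at the end of the proof of Theorem~\ref{thm_BigEntropy}. The symmetric assumption $x_2 < x_1$ forces the $x_2$-orbit to be increasing and produces the same contradiction with the two inequalities reversed, so $\psi$, and hence $\delta_n^{n-1}\sigma_1^{2k}$, is not order-preserving.

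The main obstacle I expect is bookkeeping rather than anything conceptual: one must correctly fix the direction of monotonicity of the $x_2$-orbit in each case — this uses $n \ge 3$, so that $x_n = \psi^{n-2}(x_2)$ with $n-2 \ge 1$ — and verify that the conjugation-by-$w^k$ manipulations really do collapse to a single comparison of $(x_1 x_2)^k$ with $(x_2 x_1)^k$. Because $\psi$ is not symmetric in $x_1$ and $x_2$, the cases $x_1 < x_2$ and $x_2 < x_1$ genuinely both require treatment, although they are mirror images of one another.
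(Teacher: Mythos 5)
Your proof is correct and takes essentially the same route as the paper: the paper's own proof of Theorem~\ref{thm_BigEntropy2} just computes the induced automorphism (which coincides with your $\psi$, since $\delta_n^{n-1}$ and $\delta_n^{-1}$ induce the same cyclic shift of the $x_i$) and then defers to the orbit-monotonicity argument of Theorem~\ref{thm_BigEntropy}, ending in exactly the contradiction $(x_1x_2)^k < (x_2x_1)^k < (x_1x_2)^k$ that you reproduce. The only cosmetic differences are that you peel off the central $\Delta^2$ explicitly and treat the case $x_2 < x_1$ directly instead of invoking the reversed-ordering symmetry behind the paper's ``without loss of generality.''
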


\begin{proof}
Fix a basepoint of $\pi_1(D_n)$ in the interior of $D_n$. 
By using (\ref{equation_delta}), 
we calculate 
the automorphism $\delta_n^{n-1}= (\sigma_1 \sigma_2 \cdots \sigma_{n-1})^{n-1}$. 
$$\delta_n^{n-1}: x_1 \mapsto x_2 \mapsto \cdots \mapsto x_{n-1} \mapsto x_n \mapsto x_1.$$ 
In the same manner as in the proof of Theorem~\ref{thm_BigEntropy}, 
we have the following automorphism on $F_n$ 
obtained from  the braid $\delta_n^{n-1} \sigma_1^{2k}$ 
using the same basepoint of $\pi_1(D_n)$: 
\begin{eqnarray*}
\delta_n^{n-1} \sigma_1^{2k}: 
 x_2 \mapsto x_3 \mapsto \cdots \mapsto  x_{n-1} \mapsto  x_n &\mapsto& (x_2 x_1)^{k-1} x_2 x_1 x_2^{-1} (x_1^{-1} x_2^{-1})^{k-1},
\\
x_2 &\mapsto&  (x_2 x_1)^k x_2 (x_1^{-1} x_2^{-1})^k. 
\end{eqnarray*}
Assume that $\delta_n^{n-1} \sigma_1^{2k}$ preserves some bi-ordering on $F_n$. 
In the same manner as in the proof of Theorem~\ref{thm_BigEntropy}, 
one arrives a contradiction. 
\end{proof}

\begin{center}
\begin{figure}
\includegraphics[width=4.8in]{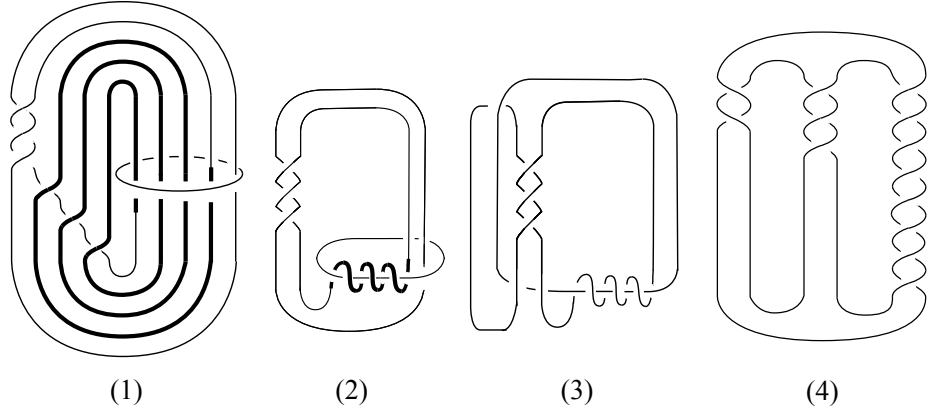}
\caption{
$\mathrm{br}(\sigma_1^{2k}\delta_n)$  
is equivalent to the $(-2, 2k+1, 2n-2)$-pretzel link ($n=5$, $k=1$ in this case), 
see (1) $\rightarrow$ (2) $\rightarrow$ (3) $\rightarrow$ (4). 
(1) $ \mathrm{br} (\sigma_1^2 \delta_5 )$. (4)  $(-2,3,8)$-pretzel link.}
\label{fig_123412pretzel}
\end{figure}
\end{center}

\begin{cor}
\label{cor_beta-1m}
Let $n \ge 3$ and $k \ge 1$. 
Then $\sigma_1^{-2k} \delta_n= \sigma_1^{-2k+1} \sigma_2 \sigma_3 \cdots \sigma_{n-1} \in B_n$ is not order-preserving. 
In particular $\sigma_1^{-1} \sigma_2 \sigma_3  \cdots \sigma_{n-1} \in B_n$ is not  order-preserving for each $n \ge 3$. 
\end{cor}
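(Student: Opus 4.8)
The plan is to reduce the corollary to Theorem~\ref{thm_BigEntropy2} by a short chain of manipulations, each of which is known to preserve the property of being order-preserving. First I would use the fact recorded earlier that a braid $\b$ is order-preserving if and only if its inverse $\b^{-1}$ is. Since
$$(\sigma_1^{-2k}\delta_n)^{-1} = \delta_n^{-1}\sigma_1^{2k},$$
it suffices to prove that $\delta_n^{-1}\sigma_1^{2k}$ is not order-preserving.

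Next I would invoke the identity $\delta_n^n = \Delta^2$ from Section~\ref{section_OPbraids}, together with the fact that $\Delta^2$ generates the centre of $B_n$ for $n \ge 3$. Writing $\delta_n^{-1} = \delta_n^{n-1}\Delta^{-2}$ and commuting the central factor $\Delta^{-2}$ past $\sigma_1^{2k}$, I obtain
$$\delta_n^{-1}\sigma_1^{2k} = \delta_n^{n-1}\sigma_1^{2k}\,\Delta^{-2}.$$
By Corollary~\ref{cor_multDelta^2}, multiplication by a power of the full twist $\Delta^2$ does not affect whether a braid is order-preserving, so $\delta_n^{-1}\sigma_1^{2k}$ is order-preserving if and only if $\delta_n^{n-1}\sigma_1^{2k}$ is.

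Finally, Theorem~\ref{thm_BigEntropy2} states exactly that $\delta_n^{n-1}\sigma_1^{2k}$ is not order-preserving for all $n \ge 3$ and $k \ge 1$. Chaining the two equivalences above then shows that $\sigma_1^{-2k}\delta_n$ is not order-preserving, which is the main assertion. The ``in particular'' statement is simply the case $k = 1$, since $\sigma_1^{-2}\delta_n = \sigma_1^{-1}\sigma_2\sigma_3\cdots\sigma_{n-1}$.

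I expect essentially no obstacle in this argument, as the substantive work is already contained in Theorem~\ref{thm_BigEntropy2}. The only points demanding care are to apply the centrality of $\Delta^2$ correctly (so that $\Delta^{-2}$ may be freely commuted past $\sigma_1^{2k}$), and to order the inverse and full-twist steps so that the braid ends up precisely in the form $\delta_n^{n-1}\sigma_1^{2k}$ covered by Theorem~\ref{thm_BigEntropy2}; note in particular that passing to the inverse is what converts the negative power $\sigma_1^{-2k}$ into the positive power $\sigma_1^{2k}$ required there.
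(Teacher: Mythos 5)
Your proposal is correct and follows essentially the same route as the paper: both arguments combine the closure of order-preservation under inversion, Corollary~\ref{cor_multDelta^2} for absorbing a central factor $\Delta^{\pm 2}$, and Theorem~\ref{thm_BigEntropy2}, differing only in whether one writes $\sigma_1^{-2k}\delta_n = (\delta_n^{n-1}\sigma_1^{2k})^{-1}\Delta^2$ directly (as the paper does) or commutes $\Delta^{-2}$ past $\sigma_1^{2k}$ after inverting (as you do). No gaps.
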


\begin{proof}
By Theorem~\ref{thm_BigEntropy2}, 
the inverse 
$(\delta_n^{n-1} \sigma_1^{2k})^{-1}$ is not order-preserving. 
Hence 
$(\delta_n^{n-1} \sigma_1^{2k})^{-1} \Delta^{2\ell}$ is not order-preserving 
for each $\ell \in {\Bbb Z}$. 
We have 
\begin{eqnarray*}
(\delta_n^{n-1} \sigma_1^{2k})^{-1} \Delta^2 &=& 
((\sigma_1 \sigma_2 \cdots \sigma_{n-1})^{n-1} \sigma_1^{2k})^{-1} (\sigma_1 \sigma_2 \cdots \sigma_{n-1})^n
\\
&=&  \sigma_1^{-2k} (\sigma_{n-1}^{-1} \sigma_{n-2}^{-1} \cdots \sigma_1^{-1})^{n-1} (\sigma_1 \sigma_2 \cdots \sigma_{n-1})^n
\\
&=&   \sigma_1^{-2k}  \sigma_1 \sigma_2 \cdots \sigma_{n-1} 
(= \sigma_1^{-2k} \delta_n). 
\end{eqnarray*}
This completes the proof.
\end{proof}

\begin{rem}
For each $n \ge 3$, 
$\sigma_1^{-2} \delta_n= 
\sigma_1^{-1} \sigma_2 \sigma_3 \cdots \sigma_{n-1} \in B_n$ is pseudo-Anosov, 
see \cite[Theorem~3.9]{HK06}. 
It is not hard to see that 
$$\sigma_1^{-2k} \delta_n= \sigma_1^{-2k+1} \sigma_2 \cdots \sigma_{n-1}$$ 
is pseudo-Anosov  for $n \ge 3$ and $k \ge 1$ 
by using the Bestvina-Handel criterion.  
The Nielsen-Thurston types of a mapping class and its inverse are the same. 
Thus $\delta_n^{n-1} \sigma_1^{2k} \in B_n$ in Theorem~\ref{thm_BigEntropy2} is pseudo-Anosov for $n \ge 3$ and $k \ge 1$. 
\end{rem}

Using similar modifications of links as in Figure~\ref{fig_123412pretzel}, 
we verify that  $\mathrm{br}(\sigma_1^{-2k} \delta_n)$ is equivalent to 
the $(-2, -2k+1, 2n-2)$-pretzel link. 
Thus we have the following. 

\begin{cor}
For each $n \ge 3$ and $k \ge 1$, 
the fundamental group of the $(-2, -2k+1, 2n-2)$-pretzel link complement is not bi-orderable. 
\end{cor}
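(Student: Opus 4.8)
The plan is to obtain this corollary by feeding the non-order-preservability of Corollary~\ref{cor_beta-1m} through the equivalence between order-preserving braids and bi-orderable braided-link groups. First I would recall from Corollary~\ref{cor_beta-1m} that, for all $n \ge 3$ and $k \ge 1$, the braid $\s_1^{-2k}\delta_n \in B_n$ is not order-preserving. By Proposition~\ref{prop_OPiffBi-ord}, a braid $\b$ is order-preserving precisely when $\pi_1(S^3 \setminus \mathrm{br}(\b))$ is bi-orderable; taking the contrapositive, the failure of $\s_1^{-2k}\delta_n$ to be order-preserving shows that $\pi_1(S^3 \setminus \mathrm{br}(\s_1^{-2k}\delta_n))$ is \emph{not} bi-orderable.

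The only remaining task is to recognize the braided link $\mathrm{br}(\s_1^{-2k}\delta_n)$ as the $(-2,-2k+1,2n-2)$-pretzel link. I would do this by an ambient isotopy of the diagram entirely parallel to the sequence of moves recorded in Figure~\ref{fig_123412pretzel} for the positive braid $\mathrm{br}(\s_1^{2k}\delta_n)$, which there is shown to be the $(-2,2k+1,2n-2)$-pretzel link. The two computations differ only in the sign of the $\s_1$-block: replacing $\s_1^{2k}$ by $\s_1^{-2k}$ reverses the crossings in the corresponding tangle, which is exactly what changes the middle pretzel parameter from $2k+1$ to $-2k+1$, while the contributions of $\delta_n$ (the $2n-2$ tangle) and of the braid axis (the $-2$ tangle) are unaffected. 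Since homeomorphic complements have isomorphic fundamental groups, the non-bi-orderability just established transfers to the pretzel link complement, which is the assertion of the corollary.

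I expect the only genuine content to lie in this last diagrammatic identification; the rest is a formal combination of Corollary~\ref{cor_beta-1m} and Proposition~\ref{prop_OPiffBi-ord}. Because the required sequence of moves is a verbatim analogue of the one already displayed, I would present it by reference to Figure~\ref{fig_123412pretzel} rather than redrawing it, noting only the single sign change in the relevant tangle.
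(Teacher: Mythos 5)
Your proposal matches the paper's argument exactly: the paper likewise combines Corollary~\ref{cor_beta-1m} with Proposition~\ref{prop_OPiffBi-ord} and identifies $\mathrm{br}(\s_1^{-2k}\delta_n)$ with the $(-2,-2k+1,2n-2)$-pretzel link by the same sequence of moves as in Figure~\ref{fig_123412pretzel}, with the sign change in the $\s_1$-block accounting for the middle parameter. Nothing is missing.
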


We turn to the last sequence of braids which plays an important role for the study of pseudo-Anosov minimal dilatations, see \cite{HK06,KT11}. 

\begin{thm}
\label{thm_small-dil}
For each $n \ge 5$, 
$$\delta_n^2 \sigma_{n-1}^{-1} \sigma_{n-2}^{-1} = (\sigma_1 \sigma_2 \cdots \sigma_{n-1})  (\sigma_1 \sigma_2 \cdots \sigma_{n-3}) \in B_n$$ 
is not order-preserving. 
\end{thm}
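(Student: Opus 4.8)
The plan is to compute the free-group automorphism induced by $\beta = \delta_n^2 \sigma_{n-1}^{-1}\sigma_{n-2}^{-1}$ and then run a conjugation-invariance argument in the spirit of Theorem~\ref{thm_BigEntropy}. Using the interior basepoint, the cyclic permutation (\ref{equation_delta}) for $\delta_n$, the formula (\ref{equation_sigma}) together with its inverse, and the right-action convention $x^{\beta\gamma}=(x^\beta)^\gamma$, a routine computation gives the automorphism
$$\phi:\ x_1 \mapsto x_{n-2}^{-1} x_n x_{n-2},\quad x_2 \mapsto x_{n-2},\quad x_j \mapsto x_{j-2}\ (3 \le j \le n-1),\quad x_n \mapsto x_{n-2}^{-1} x_{n-1} x_{n-2}.$$
Abelianizing, $\phi$ induces the permutation $\pi$ with $\pi(1)=n$, $\pi(2)=n-2$, $\pi(j)=j-2$ for $3\le j\le n-1$ and $\pi(n)=n-1$; the only two generators whose images are genuinely conjugated rather than merely permuted are $x_1$ and $x_n$. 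First I would determine the cycle structure of $\pi$, which behaves quite differently according to the parity of $n$.

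For \emph{even} $n$ the argument is immediate. Here $\pi$ has the cycle $2 \to n-2 \to n-4 \to \cdots \to 4 \to 2$ consisting of the even indices $2,4,\dots,n-2$, and this cycle avoids both conjugation points $1$ and $n$. Hence $\phi$ restricts to a genuine cyclic permutation of the generators,
$$x_2 \mapsto x_{n-2} \mapsto x_{n-4} \mapsto \cdots \mapsto x_4 \mapsto x_2,$$
a nontrivial finite orbit of length $(n-2)/2 \ge 2$. By Proposition~\ref{finite orbit}, $\phi$ preserves no left-ordering, so $\beta$ is not order-preserving.

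The main work is the \emph{odd} case, where $\pi$ is a single $n$-cycle passing through both $1$ and $n$, so there is no finite orbit and Proposition~\ref{finite orbit} does not apply. Assume $<$ is a $\phi$-invariant bi-ordering. Since $\phi(x_3)=x_1$ is a clean image, after possibly reversing the order I may assume $x_3 < x_1$, i.e. $x_3 < \phi(x_3)$; then the orbit of $x_3$ is strictly increasing, and applying $\phi$ gives $x_1 < \phi(x_1) = x_{n-2}^{-1} x_n x_{n-2}$. Conjugating by $x_{n-2}$ yields the key inequality
$$x_{n-2}\, x_1\, x_{n-2}^{-1} < x_n.$$
On the other hand $\phi^{-1}$ acts by the clean shifts $x_i \mapsto x_{i+2}$ for $1 \le i \le n-3$, with $x_{n-2}\mapsto x_2$, $x_{n-1}\mapsto x_2 x_n x_2^{-1}$ and $x_n \mapsto x_2 x_1 x_2^{-1}$. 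Iterating $\phi^{-1}$ on $x_3 < x_1$ walks once backward around the $n$-cycle and produces a decreasing chain through all generators of the shape $x_1 > x_3 > x_5 > \cdots > x_{n-2} > x_2 > x_4 > \cdots > x_{n-3} > x_{n-1} > x_2 x_n x_2^{-1} > \cdots$. Reading off this chain gives $x_{n-2} < x_1$, and also $x_2 x_n x_2^{-1} < x_2$, which upon conjugation by $x_2^{-1}$ gives $x_n < x_2 < x_{n-2}$. Feeding $x_n < x_{n-2}$ into the key inequality gives $x_{n-2} x_1 x_{n-2}^{-1} < x_{n-2}$, hence $x_1 < x_{n-2}$, contradicting $x_{n-2} < x_1$. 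Thus no invariant bi-ordering exists.

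I expect the main obstacle to be precisely this odd case: one must verify that a single backward pass around the $n$-cycle really does totally order all the generators as claimed and, crucially, that the two conjugated elements fall below $x_2$, so that the conjugation by $x_{n-2}$ introduced at $x_1 \mapsto x_{n-2}^{-1}x_n x_{n-2}$ can be cancelled at the very end. The even case and the computation of $\phi$ are routine. Finally, by Proposition~\ref{prop_OPiffBi-ord} this also shows that $\pi_1(S^3 \setminus \mathrm{br}(\beta))$ is not bi-orderable.
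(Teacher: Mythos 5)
Your proof is correct and follows essentially the same route as the paper's: the same induced automorphism, the same finite-orbit argument via Proposition~\ref{finite orbit} for even $n$, and for odd $n$ the same strategy of forcing a monotone order on the full orbit of the generators and then deriving $x_{n-2} < x_{n-2}^{-1}x_{n-1}x_{n-2}$-type contradictions from conjugation invariance. The only (cosmetic) difference is that you normalize by $x_3 < x_1$ and iterate $\phi^{-1}$ to build the chain, whereas the paper normalizes by $x_{n-1} < x_n$ and iterates $\phi$; the resulting total order on the generators is identical.
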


The braid 
 $\delta_n^2 \sigma_{n-1}^{-1} \sigma_{n-2}^{-1}$ is reducible if $n$ is even. 
 (In fact it is easy to find an essential simple closed curve on $D_n$ containing the punctures 
 labelled $2,4, \cdots, n-2$ which is invariant under the corresponding mapping class of $\mathrm{Mod}(D_n)$.) 
 If $n$ is odd, the braid $\delta_n^2 \sigma_{n-1}^{-1} \sigma_{n-2}^{-1}$ is pseudo-Anosov, 
 see  \cite[Theorem~3.11, Figure~18(c)]{HK06}.

\begin{proof}[Proof of Theorem~\ref{thm_small-dil}]
We fix a basepoint of $\pi_1(D_n)$ in the interior of $D_n$. 
From the automorphism $\sigma_i$ of $F_n$ for each $i= 1, \cdots, n-1$ (see (\ref{equation_sigma})), 
we calculate the automorphism $\sigma_i^{-1}$ on $F_n$: 
$$\sigma_i^{-1}: x_i \mapsto x_i^{-1} x_{i+1} x_i, \quad x_{i+1} \mapsto x_i, \quad x_j \mapsto x_j\ \mbox{if}\ j \ne i, i+1.$$
By (\ref{equation_delta}) together with automorphisms 
$\sigma_i^{-1}$ for $i= n-1,n-2$, 
we see that 
the following automorphism is arisen from 
the braid $\delta_n \sigma_{n-1}^{-1} \sigma_{n-2}^{-1} = \sigma_1 \sigma_2 \cdots \sigma_{n-3}$. 
\begin{eqnarray*}
\delta_n \sigma_{n-1}^{-1} \sigma_{n-2}^{-1} &:& 
x_1 \mapsto x_{n-2} \mapsto x_{n-3} \mapsto \cdots \mapsto x_2 \mapsto x_1, 
\\
&\ &x_{n-1} \mapsto x_{n-2}^{-1} x_{n-1} x_{n-2}, \quad 
x_{n} \mapsto x_{n-2}^{-1} x_{n} x_{n-2}, 
\end{eqnarray*}
see Figure~\ref{fig_sigma_n-2}. 
Thus the automorphism 
$\psi:=  \delta_n (\delta_n \sigma_{n-1}^{-1} \sigma_{n-2}^{-1}): F_n \rightarrow F_n$ is 
given by 
\begin{eqnarray*}
\psi= \delta_n^2  \sigma_{n-1}^{-1} \sigma_{n-2}^{-1}  &:&  
x_1 \mapsto x_{n-2}^{-1} x_{n} x_{n-2}, 
\\
&\ & x_2 \mapsto x_{n-2}, \ x_3 \mapsto x_1, \cdots, x_{n-2} \mapsto x_{n-4}, \  x_{n-1} \mapsto x_{n-3}, 
\\
&\ & x_n \mapsto x_{n-2}^{-1} x_{n-1} x_{n-2}.
\end{eqnarray*}
Suppose that $n$ is even. 
Then the orbit of $x_2$ under $\psi$ is nontrivial and finite: 
$$x_2 \mapsto x_{n-2} \mapsto x_{n-4} \mapsto \cdots \mapsto x_4 \mapsto x_2.$$
Thus $\psi$ does not preserve any left-ordering on $F_n$ by Proposition~\ref{finite orbit}.

Suppose that $n$ is odd. 
Then the above automorphism $\psi: F_n \rightarrow F_n$ is described as follows. 
\begin{eqnarray*}
x_{n-1} \mapsto x_{n-3} \mapsto \cdots \mapsto x_2 \mapsto x_{n-2} \mapsto x_{n-4} \mapsto x_1 &\mapsto&  x_{n-2}^{-1} x_{n} x_{n-2}, 
\\
x_n &\mapsto&  x_{n-2}^{-1} x_{n-1} x_{n-2}. 
\end{eqnarray*}
We suppose that $\psi$ preserves a bi-ordering $<$ of $F_n$. 
Without of loss of generality, we may assume that $x_{n-1} < x_n$. 
By the conjugation invariance of the bi-ordering, 
$x_{n-2}^{-1} x_{n-1} x_{n-2} < x_{n-2}^{-1} x_{n} x_{n-2}$ holds. 
Since $\psi^{-1}$ preserves the same ordering $<$, 
this implies that $x_n  < x_1$, 
see the above definition of $\psi$. 
Hence we have $x_{n-1}< x_n < x_1$. 
This tells us that the ordering $<$ is increasing on the orbit $\mathcal{O}(x_{n-1})$, i.e, 
$$x_{n-1} < x_{n-3} < \cdots < x_2 < x_{n-2} < x_{n-4} < x_1 <  x_{n-2}^{-1} x_{n} x_{n-2}< \cdots$$
In particular $x_{n-2}< x_{n-2}^{-1} x_{n} x_{n-2}$ which implies that $x_{n-2}< x_n$ by the conjugation invariance of the bi-ordering. 
This together with the $\psi$-invariance of $<$ gives us that 
$x_{n-4}< x_{n-2}^{-1} x_{n-1} x_{n-2}$. 

On the other hand, by $x_{n-1}< x_{n-2}$, we have 
$x_{n-2}^{-1} x_{n-1} x_{n-2} <  x_{n-2}$ by the conjugation invariance again.  
However we have 
$x_{n-2}< x_{n-4}<x_{n-2}^{-1} x_{n-1} x_{n-2}  $, in particular 
$$x_{n-2}< x_{n-2}^{-1} x_{n-1} x_{n-2} \ (< x_{n-2}), $$
which is a contradiction. 
This  completes the proof. 
\end{proof}

\begin{center}
\begin{figure}
\includegraphics[width=5in]{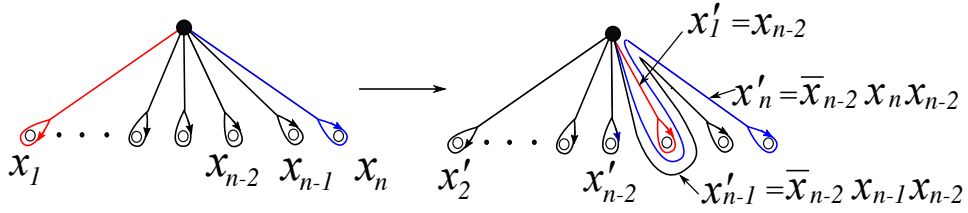}
\caption{
$\delta_n \sigma_{n-1}^{-1} \sigma_{n-2}^{-1} =\sigma_1 \sigma_2 \cdots \sigma_{n-3}: F_n \rightarrow F_n$.}
\label{fig_sigma_n-2}
\end{figure}
\end{center}

Finally we claim that the following pseudo-Anosov braid with the minimal dilatation $\delta(D_8)$ 
is not order-preserving.

\begin{lem}
\label{lem_mini8braid}
The braid  
$\delta_8^5 \sigma_7^{-1} \sigma_6^{-1} = (\sigma_1 \sigma_2 \sigma_3 \sigma_4 \sigma_5 \sigma_6 \sigma_7)^4 \sigma_1 \sigma_2 \sigma_3 \sigma_4 \sigma_5  \in B_8$ 
is not order-preserving. 
\end{lem}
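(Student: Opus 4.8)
The plan is to reduce everything to an explicit computation of the automorphism $\psi\colon F_8\to F_8$ induced by the braid, and then to run an order-chasing argument of exactly the type used for Theorem~\ref{thm_small-dil}. Fixing a basepoint in the interior of $D_8$, I would compute $\psi$ using the clean cyclic form~(\ref{equation_delta}) for $\delta_8^5$ together with the formulas~(\ref{equation_sigma}) for $\sigma_7^{-1}$ and $\sigma_6^{-1}$; this choice of representative is harmless since order-preservation depends only on the outer automorphism class. The result is
\[
x_1\mapsto x_4,\quad x_2\mapsto x_5,\quad x_3\mapsto x_6^{-1}x_7x_6,\quad x_4\mapsto x_6^{-1}x_8x_6,\quad x_5\mapsto x_6,\quad x_6\mapsto x_1,\quad x_7\mapsto x_2,\quad x_8\mapsto x_3.
\]
The underlying permutation is the single $8$-cycle $(1\,4\,8\,3\,7\,2\,5\,6)$, and crucially only $x_3$ and $x_4$ pick up a conjugation, both by $x_6$.

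Suppose, toward a contradiction, that $\psi$ preserves a bi-ordering $<$ of $F_8$. Since the reverse of a $\psi$-invariant bi-ordering is again one, I may assume $x_7<x_8$. Conjugation invariance gives $x_6^{-1}x_7x_6<x_6^{-1}x_8x_6$, that is $\psi(x_3)<\psi(x_4)$, so applying $\psi^{-1}$ yields $x_3<x_4$; applying $\psi^{-1}$ again (note $\psi(x_8)=x_3$ and $\psi(x_1)=x_4$) gives $x_8<x_1$. Hence $x_7<x_8<x_1$, and since $x_1=\psi^4(x_7)$ this says $x_7<\psi^4(x_7)$. If instead $\psi(x_7)<x_7$ held, repeated application of the order-preserving map $\psi$ would force $\psi^4(x_7)<x_7$; therefore $x_7<\psi(x_7)$ and $<$ is strictly increasing along the whole (infinite) $\psi$-orbit of $x_7$:
\[
x_7<x_2<x_5<x_6<x_1<x_4<x_6^{-1}x_8x_6<x_1^{-1}x_3x_1<\cdots,
\qquad\text{with}\qquad x_5x_3x_5^{-1}=\psi^{-1}(x_7)<x_7 .
\]

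From this single monotone chain I would then read off three inequalities and close the loop. The clean entries give $x_5<x_1$. Comparing $x_1<x_1^{-1}x_3x_1$ and conjugating by $x_1$ gives $x_1<x_3$. Comparing $x_5x_3x_5^{-1}<x_7<x_5$ and conjugating by $x_5^{-1}$ gives $x_3<x_5$. Together these read $x_5<x_1<x_3<x_5$, a contradiction, so no $\psi$-invariant bi-ordering exists.

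The one genuinely delicate point is that, in contrast to the periodic case handled by Proposition~\ref{finite orbit}, the $\psi$-orbit of $x_7$ stops consisting of bare generators after a few steps: the inverse letters $\sigma_6^{-1},\sigma_7^{-1}$ inject conjugators, so later orbit elements are $x_6^{-1}x_8x_6$, $x_1^{-1}x_3x_1$, and so on. The device that makes the argument work is to exploit conjugation invariance of the bi-ordering to strip these conjugators off, turning order relations among the conjugated orbit elements into order relations among the bare generators $x_1,x_3,x_5$, which then cycle contradictorily. Beyond verifying the explicit form of $\psi$ and its first eight orbit elements, I expect no serious obstacle.
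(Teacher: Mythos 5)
Your proof is correct and follows essentially the same route as the paper's: you compute the same automorphism of $F_8$ (the paper's $\phi$ agrees with your $\psi$), assume $x_7<x_8$, deduce $x_8<x_1$ by applying $\psi^{-2}$, conclude the ordering is increasing along the orbit of $x_7$, and then strip conjugators via conjugation invariance to force a cyclic chain of inequalities among bare generators. The only difference is cosmetic: the paper closes with the contradiction $x_6<x_4<x_6^{-1}x_7x_6<x_6$, while you close with $x_5<x_1<x_3<x_5$.
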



\begin{proof}
Fix a basepoint of $\pi_1(D_8)$ in the interior of $D_8$. 
The following  automorphism corresponds to the braid 
 $\delta_8^5 \sigma_7^{-1} \sigma_6^{-1} $. 
 \begin{eqnarray*}
\phi:= \delta_8^5 \sigma_7^{-1} \sigma_6^{-1} : x_7 \mapsto x_2 \mapsto x_5 \mapsto x_6 \mapsto x_1 \mapsto x_4 \mapsto &x_6^{-1}x_8 x_6,&
\\
x_8 \mapsto x_3 \mapsto &x_6^{-1}x_7 x_6.&
\end{eqnarray*}
Assume that $\phi$ preserves a bi-ordering $<$. 
Without loss of generality, we may suppose that 
$x_7 < x_8$. 
Then $x_6^{-1}x_7 x_6 < x_6^{-1}x_8 x_6$ by the conjugation invariance. 
Since $\phi^{-1}$ preserves the same ordering $<$, 
we have 
$$x_8 = \phi^{-2} (x_6^{-1}x_7 x_6)< x_1 = \phi^{-2}( x_6^{-1}x_8 x_6).$$
From $x_7< x_8$ and $x_8< x_1$, we get $x_7<x_1$. 
Hence the ordering is increasing on the orbit of $x_7$: 
$$x_7 <  x_2 < x_5 < x_6 < x_1 < x_4 < x_6^{-1}x_8 x_6< \phi(x_6^{-1}x_8 x_6)< \cdots$$ 
In particular $x_6 <  x_6^{-1}x_8 x_6$, and this implies that 
$x_6< x_8$ by the conjugation invariance. 
Then we get $\phi^2(x_6)= x_4< \phi^2(x_8)= x_6^{-1}x_7 x_6$. 
On the other hand, $x_7 < x_6$ implies that 
$x_6^{-1}x_7 x_6 < x_6$ by the conjugation invariance again. 
But this is a contradiction, since 
$$x_6 < x_1 < x_4 < x_6^{-1}x_7 x_6 (< x_6).$$
Thus we conclude that 
$\phi$ does not preserve any bi-ordering of $F_8$. 
\end{proof}

\subsection{Questions}

Consider the braid $\beta_{m,n} \in B_{m+n+1}$ for $m,n \ge 1$ given by 
$$\beta_{m,n}= \sigma_1^{-1} \sigma_2^{-1} \cdots \sigma_m^{-1} \sigma_{m+1} \sigma_{m+2} \cdots \sigma_{m+n},$$ 
see Figure~\ref{fig_b-s_braid}(1). 
For example $\beta_{1,2}= \sigma_1^{-1} \sigma_2 \sigma_3$. 
The link $\mathrm{br}(\beta_{m,n})$ is equivalent to a $2$-bridge link as shown in Figure~\ref{fig_b-s_braid}(2).  
For any $m,n \ge 1$, the braid $\beta_{m,n}$ is pseudo-Anosov, see \cite[Theorem~3.9]{HK06}. 
We proved in Corollary~\ref{cor_beta-1m} that 
$\beta_{1,n}= \sigma_1^{-1} \sigma_2  \cdots \sigma_{n+1}$ 
is not order-preserving for each $n \ge 1$. 
We conjecture that 
$\beta_{m,n}$ is  not order-preserving for any $m,n \ge 1$. 

Let us introduce the braid $\gamma_{m,n} \in B_{m+n+3}$ for $m,n \ge 0$. 
If $m,n \ge 1$, 
 $\gamma_{m,n}$ is of the form 
$$\gamma_{m,n} = \sigma_1^{-2} \sigma_2^{-1} \cdots \sigma_{m+1}^{-1} \sigma_{m+2} \cdots \sigma_{m+n+1} \sigma_{m+n+2}^2.$$
For example $\gamma_{1,1}=  \sigma_1^{-2} \sigma_2^{-1} \sigma_3 \sigma_4^2 \in B_5$. 
If we ignore the $1$st string and the last string of $\gamma_{m,n}$, 
the resulting braid is an $(m+n+1)$-strand braid which is equal to $\beta_{m,n}$. 
We let 
$\gamma_{0,0}= \sigma_1^{-2} \sigma_2^2 \in P_3$, 
$\gamma_{0,1} = \sigma_1^{-2} \sigma_2 \sigma_3^2 \in B_4$ (see Remark~\ref{rem_disktwistC4}), and 
$\gamma_{1,0}= \sigma_1^{-2} \sigma_2^{-1} \sigma_3^2 \in B_4$.

One can prove in the same manner as in Theroem~\ref{thm_C4} and Remark~\ref{rem_disktwistC4} that 
 for each $m,n \ge 0$ 
$$S^3 \setminus \mathrm{br}(\gamma_{m,n}) \simeq S^3 \setminus \mathrm{br}(\sigma_1^{-2} \sigma_2^2) \simeq S^3 \setminus C_4.$$ 
Since $\sigma_1^{-2} \sigma_2^2 $ is a pseudo-Anosov, pure $3$-strand braid 
(in other words, $S^3 \setminus C_4$ is a hyperbolic $3$-manifolds whose fundamental group if bi-orderable),  
we have the following.

\begin{lem}
The braid  
$\gamma_{m,n}$ is pseudo-Anosov and order-preserving for $m,n \ge 0$. 
\end{lem}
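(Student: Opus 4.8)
The plan is to deduce both assertions from the chain of homeomorphisms
$$S^3 \setminus \mathrm{br}(\gamma_{m,n}) \simeq S^3 \setminus \mathrm{br}(\sigma_1^{-2}\sigma_2^2) \simeq S^3 \setminus C_4$$
recorded just above the lemma, so that no new computation with the word $\gamma_{m,n}$ is needed; both properties are simply transported across these homeomorphisms, together with the fact that $\sigma_1^{-2}\sigma_2^2$ is a pure braid whose braided link complement is $S^3 \setminus C_4$.

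First I would settle the order-preserving claim. Since $\sigma_1^{-2}\sigma_2^2 \in P_3$, Proposition~\ref{pure OP} shows it is order-preserving, and hence by Proposition~\ref{prop_OPiffBi-ord} (equivalently, by Theorem~\ref{thm_C4}) the group $\pi_1(S^3 \setminus \mathrm{br}(\sigma_1^{-2}\sigma_2^2)) \cong \pi_1(S^3 \setminus C_4)$ is bi-orderable. Bi-orderability of the fundamental group is a homeomorphism invariant, so the displayed homeomorphism yields that $\pi_1(S^3 \setminus \mathrm{br}(\gamma_{m,n}))$ is bi-orderable. Applying Proposition~\ref{prop_OPiffBi-ord} in the reverse direction to $\gamma_{m,n}$ then shows that $\gamma_{m,n}$ is order-preserving.

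Next I would handle the pseudo-Anosov claim by passing to the mapping torus. By Lemma~\ref{lemma} we have $Int(\T_{\gamma_{m,n}}) \cong S^3 \setminus \mathrm{br}(\gamma_{m,n}) \simeq S^3 \setminus C_4$, and $S^3 \setminus C_4$ is hyperbolic, being Yoshida's minimal volume orientable $4$-cusped hyperbolic $3$-manifold. Thus $\T_{\gamma_{m,n}}$ is hyperbolic, and by the hyperbolization theorem of Thurston~\cite{Thurston98} the mapping torus $\T_\beta$ is hyperbolic if and only if $\beta$ is pseudo-Anosov; hence $\gamma_{m,n}$ is pseudo-Anosov. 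One could instead run the Bestvina--Handel algorithm directly on $\gamma_{m,n}$, as in the remark above, but the present argument avoids any case analysis in $m$ and $n$.

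The genuine work, and the step I expect to be the main obstacle, is the homeomorphism $S^3 \setminus \mathrm{br}(\gamma_{m,n}) \simeq S^3 \setminus C_4$ itself, which is asserted to follow ``in the same manner as in Theorem~\ref{thm_C4} and Remark~\ref{rem_disktwistC4}'' rather than carried out in detail. To make it rigorous for all $m,n \ge 0$ one must exhibit an explicit sequence of disk twists $\mathfrak{t}_D^{\pm}$ about the disks bounded by the trivial, axis-like components of $\mathrm{br}(\gamma_{m,n})$ that absorb the blocks $\sigma_2^{-1}\cdots\sigma_{m+1}^{-1}$ and $\sigma_{m+2}\cdots\sigma_{m+n+1}$ one strand at a time, reducing $\gamma_{m,n}$ to $\sigma_1^{-2}\sigma_2^2$ without changing the link complement, and then check that the twisting regions and the number of strands passing through each twisting disk behave uniformly in $m$ and $n$. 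Once this homeomorphism is established, the two conclusions follow formally as above.
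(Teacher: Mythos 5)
Your proposal is correct and follows essentially the same route as the paper: the paper likewise deduces both properties from the homeomorphism $S^3 \setminus \mathrm{br}(\gamma_{m,n}) \simeq S^3 \setminus \mathrm{br}(\sigma_1^{-2}\sigma_2^2) \simeq S^3 \setminus C_4$, using that $\sigma_1^{-2}\sigma_2^2$ is a pure pseudo-Anosov braid so that this manifold is hyperbolic with bi-orderable fundamental group, and then transporting hyperbolicity (hence the pseudo-Anosov property, via Thurston's hyperbolization and Lemma~\ref{lemma}) and bi-orderability (hence the order-preserving property, via Proposition~\ref{prop_OPiffBi-ord}) back to $\gamma_{m,n}$. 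You are also right that the only substantive step, the disk-twist verification of the homeomorphism for all $m,n \ge 0$, is exactly the part the paper itself leaves as an assertion ``in the same manner as'' Theorem~\ref{thm_C4} and Remark~\ref{rem_disktwistC4}.
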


If $(m,n) \ne (0,0)$, braids $\gamma_{m,n}$'s are non-pure and order-preserving. 
By Proposition~\ref{sym not id},  the braid $\gamma_{m,n}$ preserves some bi-ordering which is not standard ordering of the free group. 
Notice that for $m,n \ge 0$, 
the permutation of $\mathcal{S}_{m+n+3}$ associated to $\gamma_{m,n}$ has more than $1$ cycle, 
i.e, the closure of the braid $\gamma_{m,n}$ is a link with more than $1$ component. 
We ask the following.

\begin{ques}
Does there exist an order-preserving braid $\beta \in B_n$ ($n \ge 3$) whose permutation is cyclic 
(i.e, the closure $\widehat{\beta}$ is a knot)?
\end{ques}

\begin{center}
\begin{figure}
\includegraphics[width=4.8in]{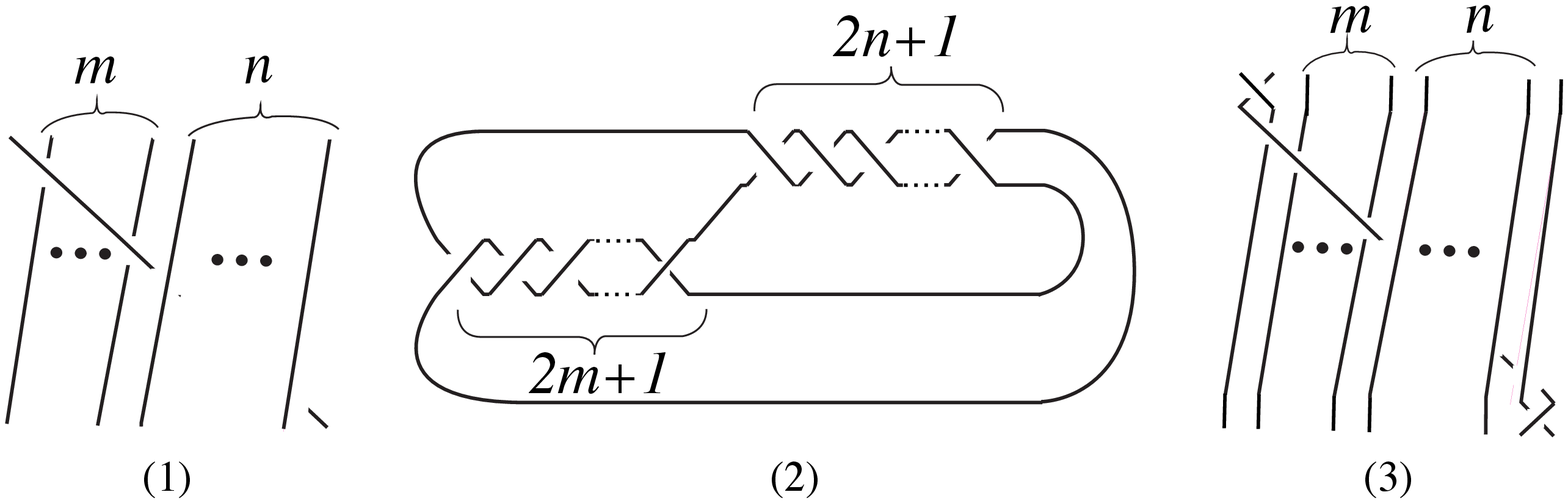}
\caption{(1) $\beta_{m,n} \in B_{m+n+1}$. (2) $2$-bridge link which is equivalent to $\mathrm{br}(\beta_{m,n})$. 
(3) $\gamma_{m,n} \in B_{m+n+3}$.}
\label{fig_b-s_braid}
\end{figure}
\end{center}

\appendix
\section{Ordering free groups}\label{Ordering free groups}

For any group $G$ the lower central series $G = \c_1G \supset \c_2G \supset \cdots$ is defined inductively by the formula $\c_{k+1}G := [G, \c_{k}G]$.  It is well known (see for example \cite{MKS76}) that if $F$ is a free group (or hyperbolic surface group) then the lower central quotients $\c_kF / \c_{k+1}F$ are free abelian of finite rank and also that $\cap_{k=1}^\infty \c_kF = \{1\}$. 
Following \cite{Neumann49} one can bi-order $F$ as we will now describe.  Choose an arbitrary bi-order $<_k$ of the (free abelian) group $\c_kF / \c_{k+1}F$ and define the positive cone for an ordering $<$ of $F$ as follows.  For $1 \ne g \in F$, declare $g$ to be positive if $1 <_k [g]$ in $\c_kF / \c_{k+1}F$, where $k$ is the unique integer for which $g \in \c_kF \setminus \c_{k+1}F$.
It is routine to check that this defines a bi-ordering $<$ of $F$.  We shall say that an ordering of $F$ defined in this way is a {\em standard} ordering of $F$.  If the rank of $F$ is greater than one, there are uncountably many standard orderings.  However there are uncountably many non-standard orderings of $F$ as well, for example as in the end of Section \ref{explicit}. 

\begin{prop}\label{LC convex}
For any standard ordering of the free group $F$, all the lower central subgroups $\c_kF$ are convex.
\end{prop}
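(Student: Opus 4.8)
The plan is to reduce convexity of the subgroup $\c_k F$ to a statement about \emph{positive} elements, and then to exploit the fact that in a standard ordering the sign of a nontrivial $g \in F$ is read off from a single lower central quotient. For $g \ne 1$ write $w(g)$ for the unique integer with $g \in \c_{w(g)}F \setminus \c_{w(g)+1}F$; this is well defined and finite because $\bigcap_m \c_m F = \{1\}$. By the definition of the standard ordering, $1 < g$ holds exactly when $1 <_{w(g)} [g]$, where $[g]$ denotes the image of $g$ in $\c_{w(g)}F / \c_{w(g)+1}F$.

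First I would reduce to positive elements. Given $s, s' \in \c_k F$ and $g \in F$ with $s < g < s'$, left-invariance of $<$ gives $1 < s^{-1}g < s^{-1}s'$ with $s^{-1}s' \in \c_k F$; so it suffices to show that $1 < x < h$ with $h \in \c_k F$ forces $x \in \c_k F$, since then $g = s(s^{-1}g) \in \c_k F$. Note $h > 1$ is automatic here.

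The key step is a valuation-type lemma for $w$: if $w(a) = j$ and $w(b) > j$, then $w(ab) = j$ and $[ab] = [a]$ in $\c_j F / \c_{j+1}F$. This is immediate, since $a \in \c_j F$ and $b \in \c_{j+1}F \subset \c_j F$ give $ab \in \c_j F$, and in the abelian quotient $\c_j F / \c_{j+1}F$ the class of $b$ is trivial, so $[ab] = [a] \ne 1$, whence $ab \notin \c_{j+1}F$. I also record $w(a^{-1}) = w(a)$ and $[a^{-1}] = [a]^{-1}$, both of which follow since $\c_j F$ is a subgroup and the quotient map is a homomorphism. With this in hand I would argue as follows: suppose $1 < x < h$ with $h \in \c_k F$, and assume for contradiction $x \notin \c_k F$, i.e. $j := w(x) < k$. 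Then $w(h) \ge k > j$, so applying the lemma with $a = x^{-1}$ and $b = h$ yields $w(x^{-1}h) = j$ and $[x^{-1}h] = [x^{-1}] = [x]^{-1}$ in $\c_j F / \c_{j+1}F$. Since $1 < x$ with $w(x) = j$ we have $1 <_j [x]$, hence $[x]^{-1} <_j 1$, so $x^{-1}h$ is negative; therefore $h < x$, contradicting $x < h$. Thus $w(x) \ge k$, i.e. $x \in \c_k F$, which finishes the reduction and hence the proof.

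The main obstacle is not a deep one: the only point requiring care is confirming that the ordering is genuinely governed by the lowest-weight quotient, that is, establishing the valuation lemma together with the observation that when $w(x) < w(h)$ the sign of $x^{-1}h$ is determined inside $\c_j F / \c_{j+1}F$ alone. Once $w$ is seen to behave multiplicatively across distinct weights, convexity of each $\c_k F$ is forced.
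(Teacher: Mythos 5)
Your proof is correct and follows essentially the same route as the paper's: reduce to $1 < x < h$ with $h \in \c_kF$ by left-invariance, assume $j = w(x) < k$ for contradiction, and derive a sign contradiction from the fact that $[x^{-1}h] = [x]^{-1}$ in $\c_jF/\c_{j+1}F$. Your explicit ``valuation lemma'' is just a cleanly packaged version of the step the paper carries out inline.
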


\begin{proof}
Using invariance under multiplication it is sufficient to suppose $1 < f < g$ and $g \in \c_kF$, and show that $f \in \c_kF$.  Now $f \in \c_jF \setminus \c_{j+1}F$ for some unique positive integer $j$.  Assume for contradiction that $j < k$.  By the definition of $<$ we have that $1 <_j [f]$ in $\c_jF / \c_{j+1}F$.  But since $j < k$ we also have $g \in \c_jF$ and we see that 
$f^{-1}g \in \c_jF \setminus \c_{j+1}F$.  But since $1 < f^{-1}g$ we have $1 <_j [f^{-1}g]$ in $\c_jF / \c_{j+1}F$.  However, $g \in \c_{j+1}$ and so $[f^{-1}g] = [f^{-1}]$, which implies the contradiction that $1 <_j [f^{-1}]$.  Therefore we conclude that $j \ge k$ and therefore $f \in \c_jF \subset \c_kF$.  
\end{proof}

Note that for any automorphism of a group $\phi: G \to G$ we have $\phi(\c_kG) = \c_kG$ and so there are induced automorphisms $\phi_k: \c_kG/\c_{k+1}G \to \c_kG/\c_{k+1}G$.  In particular, 
$\phi_1$ is the same as the abelianization $\phi_{ab} : G/[G,G] \to G/[G,G]$.  The following is a well-known group theoretic result, but we include a proof, suggested by Thomas Koberda, for the reader's convenience.  

\begin{lem}
\label{lem_identity-map}
Suppose $\phi: G \to G$ is an automorphism of a group $G$ such that 
the abelianization $\phi_{ab} : G/[G,G] \to G/[G,G]$ is the identity map.  Then for each $k \ge 1$, the homomorphism $\phi_k: \c_kG/\c_{k+1}G \to \c_kG/\c_{k+1}G$ is also the identity map.
\end{lem}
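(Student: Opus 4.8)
The plan is to induct on $k$. The base case $k=1$ is exactly the hypothesis, since $\phi_1$ coincides with $\phi_{ab}$, which is assumed to be the identity. For the inductive step I would assume that $\phi_k$ is the identity on $\c_kG/\c_{k+1}G$ and aim to show that $\phi_{k+1}$ is the identity on $\c_{k+1}G/\c_{k+2}G$.

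First I would record the structural fact that $\c_{k+1}G = [G,\c_kG]$ is, by definition, generated as a subgroup by the commutators $[g,c]$ with $g \in G$ and $c \in \c_kG$. Consequently the quotient $\c_{k+1}G/\c_{k+2}G$ is generated by the classes $\overline{[g,c]}$ of such commutators, and it suffices to verify that $\phi$ fixes each such class.

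The key step is to promote the commutator to a biadditive pairing on the associated graded. Using the standard containments $[\c_iG,\c_jG]\subseteq \c_{i+j}G$ together with the commutator identities $[xy,z]=[x,z]^y[y,z]$ and $[x,yz]=[x,z][x,y]^z$, I would check that, working modulo $\c_{k+2}G$, the assignment $(g,c)\mapsto \overline{[g,c]}$ descends to a biadditive pairing $G/\c_2G \times \c_kG/\c_{k+1}G \to \c_{k+1}G/\c_{k+2}G$. Concretely, the conjugations appearing in the two identities modify $[g,c]$ only by commutators lying in $[\c_{k+1}G,G]=\c_{k+2}G$, so additivity holds in each slot; and replacing $g$ by $ga$ with $a\in\c_2G$, or $c$ by $cb$ with $b\in\c_{k+1}G$, alters $[g,c]$ only within $[\c_2G,\c_kG]\subseteq\c_{k+2}G$, respectively $[G,\c_{k+1}G]=\c_{k+2}G$, so the pairing depends only on the classes of its arguments modulo $\c_2G$ and $\c_{k+1}G$.

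With this pairing in hand the conclusion is immediate. The hypothesis that $\phi_{ab}$ is the identity gives $\phi(g)\equiv g \pmod{\c_2G}$, while the inductive hypothesis gives $\phi(c)\equiv c \pmod{\c_{k+1}G}$ for every $c\in\c_kG$. Since $\phi([g,c])=[\phi(g),\phi(c)]$ and the pairing sees only these classes, we obtain $\overline{[\phi(g),\phi(c)]}=\overline{[g,c]}$ in $\c_{k+1}G/\c_{k+2}G$. Thus $\phi_{k+1}$ fixes every generator and is therefore the identity, completing the induction. I expect the main obstacle to be exactly the careful verification of well-definedness and biadditivity of the commutator pairing, since that is the one place where the commutator calculus, and the containments $[\c_iG,\c_jG]\subseteq\c_{i+j}G$, must be carried out honestly rather than quoted.
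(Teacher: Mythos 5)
Your proof is correct and follows essentially the same route as the paper's: induction on $k$, reduction to the commutator generators $[g,c]$ of $\c_{k+1}G/\c_{k+2}G$, and the containment $[\c_iG,\c_jG]\subseteq\c_{i+j}G$ to absorb all error terms into $\c_{k+2}G$. The only difference is presentational: you package the commutator calculus as a well-defined biadditive pairing on the associated graded quotients, whereas the paper performs the same cancellations directly at the level of elements by writing $\phi(g)=gc$ with $c\in\c_2G$ and $\phi(h)=hd$ with $d\in\c_{k+1}G$.
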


\begin{proof}  We prove $\phi_k = id$ by induction on $k$, the hypothesis being the base case.  Note that the hypothesis implies that, for every $g \in G$ we have $\phi(g) = gc$ where $c \in \c_2G$ may depend upon $g$.  Now, suppose that $\phi_k: \c_kG/\c_{k+1}G \to \c_kG/\c_{k+1}G$ is the identity.
This implies that for $h \in \c_kG$ we have that $\phi(h) = hd$, where $d \in \c_{k+1}G$, also dependent upon $h$.

To show that $\phi_{k+1} = id$, consider any nonidentity element of $\c_{k+1}G$.  Such an element is a product of commutators of the form $[g,h]$ with $g \in G, h \in \c_kG$, so it suffices to show that
$$\phi_{k+1}([g,h]) \equiv [g,h]  \mod {\c_{n+2}G}.$$
We calculate $\phi_{k+1}([g,h]) \equiv [\phi(g),\phi(h)] = [gc, hd] = gchdc^{-1}g^{-1}d^{-1}h^{-1} \equiv 
 gchc^{-1}g^{-1}h^{-1}$, the latter equivalence following because $d \in \c_{k+1}G$ commutes with every element of $G$, modulo $\c_{k+2}G$. But note that 
 $gchc^{-1}g^{-1}h^{-1} = [g,h]x$, where $x = hg(h^{-1}chc^{-1})g^{-1}h^{-1}$.  Observe that the expression in parentheses is a commutator of $h^{-1} \in \c_kG$ and $c \in \c_2G$.  It is well-known 
 (see for example \cite[page~293]{MKS76}) that $[\c_mG,\c_nG] \subset \c_{m+n}G$.  Therefore, 
$h^{-1}chc^{-1} \in \c_{k+2}G$. Being a conjugate, $x$ also belongs to $\c_{k+2}G$, and therefore 
$\phi([g,h]) \equiv
[g,h]x \equiv [g,h] \mod {\c_{k+2}G}.$
\end{proof}

\begin{prop}\label{auto identity}
Suppose $\phi: F_n \to F_n$ is such that $\phi_{ab}: \Z^n \to \Z^n$ is the identity map $\phi_{ab} = id: \Z^n \to \Z^n$.  Then $\phi$ preserves {\em every} standard bi-ordering of $F_n$.
\end{prop}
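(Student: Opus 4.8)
The plan is to exploit the criterion, recorded just after Proposition~\ref{prop_BO-criterion}, that an automorphism preserves a bi-ordering if and only if it preserves the associated positive cone; so it will suffice to fix an arbitrary standard ordering $<$ of $F_n$, with positive cone $\P$, and show $\phi(\P) = \P$. I would begin by recalling how positivity is decided in a standard ordering: each $1 \ne g \in F_n$ lies in a unique $\c_kF_n \setminus \c_{k+1}F_n$, and $g \in \P$ precisely when $1 <_k [g]$, where $[g]$ denotes the class of $g$ in $\c_kF_n/\c_{k+1}F_n$ and $<_k$ is the chosen bi-order on that quotient. Thus the sign of $g$ is governed entirely by two data: its \emph{level} $k$ and its \emph{class} $[g]$ at that level. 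The whole proof amounts to checking that $\phi$ leaves both of these unchanged.

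For the level, I would use that $\phi$, being an automorphism, stabilizes every term of the lower central series, $\phi(\c_kF_n) = \c_kF_n$, as noted just before Lemma~\ref{lem_identity-map}. Hence $g \in \c_kF_n \setminus \c_{k+1}F_n$ forces $\phi(g) \in \c_kF_n$, and $\phi(g) \notin \c_{k+1}F_n$ as well, since $\phi^{-1}$ also stabilizes $\c_{k+1}F_n$; so $\phi(g)$ sits at the same level $k$. For the class, the hypothesis $\phi_{ab} = id$ is exactly what lets me invoke Lemma~\ref{lem_identity-map}, which yields that the induced map $\phi_k$ on $\c_kF_n/\c_{k+1}F_n$ is the identity for every $k$. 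Consequently $[\phi(g)] = \phi_k([g]) = [g]$.

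Putting these together, $g$ and $\phi(g)$ share the same level and the same class, so $1 <_k [g] \iff 1 <_k [\phi(g)]$, i.e. $g \in \P \iff \phi(g) \in \P$. This gives $\phi(\P) = \P$, whence $\phi$ preserves $<$; since the standard ordering was arbitrary, $\phi$ preserves every standard ordering.

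I do not anticipate a genuine obstacle here, because all the substance has been isolated in Lemma~\ref{lem_identity-map}, whose inductive commutator estimate is the real engine. Granting that lemma, the present statement is just a matching of the level-by-level construction of the standard ordering against the level- and class-preserving behaviour of $\phi$; the only spot deserving a moment's care is the level-preservation step, which relies on $\phi^{-1}$, and not merely $\phi$, stabilizing the lower central series.
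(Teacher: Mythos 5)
Your proposal is correct and follows essentially the same route as the paper's proof: both invoke Lemma~\ref{lem_identity-map} to see that each induced map $\phi_k$ on $\c_kF_n/\c_{k+1}F_n$ is the identity, hence the positive cone of any standard ordering is preserved. Your write-up merely makes explicit the level-preservation step (that $\phi$ and $\phi^{-1}$ stabilize the lower central series), which the paper leaves implicit.
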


\begin{proof}
By Lemma~\ref{lem_identity-map}, with $G = F_n$ we see that $\phi_k$ is the identity automorphism of 
$\c_kF_n/\c_{k+1}F_n$ and therefore preserves {\em every} ordering of $\c_kF_n/\c_{k+1}F_n$.  It follows that $\phi$ preserves the positive cone of every standard ordering of $F_n$.
\end{proof}

\section{The Whitehead link}
\label{Whitehead}

Let $W$ be the Whitehead link in $S^3$, see Figure~\ref{fig_2Whitehead}(1). 
Our goal is to prove: 

\begin{thm}
\label{thm_Whitehead}
The fundamental group of the Whitehead link complement ${\Bbb W} = S^3 \setminus W$  is bi-orderable. 
\end{thm}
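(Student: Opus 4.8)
The plan is to exploit the fact that the Whitehead link complement ${\Bbb W}$ fibers over the circle, and to reduce the bi-orderability of $\pi_1({\Bbb W})$ to an ordering question on the free fundamental group of the fiber. Concretely, I would first exhibit ${\Bbb W}$ as a surface bundle ${\Bbb T}_\phi$ over $S^1$ whose fiber $\Sigma$ is a compact orientable surface with boundary, so that $\pi_1(\Sigma)$ is free. Since $W$ has two components of linking number zero, the natural candidate is a genus-one Seifert surface with two boundary circles, giving $\pi_1(\Sigma) \cong F_3$. With a representative monodromy $\phi: \Sigma \to \Sigma$ in hand, Proposition~\ref{prop_fibration-HNN} reduces the theorem to producing a single bi-ordering of $F_3$ that is invariant under the induced automorphism $\phi_* : F_3 \to F_3$.

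Next I would read off $\phi_*$ explicitly on a free basis of $\pi_1(\Sigma)$ from the fibration and pass to its abelianization $\phi_{ab}: \Z^3 \to \Z^3$. Because the two boundary components of $\Sigma$ trace the two distinct cusps and are not interchanged by $\phi$, a boundary class is fixed and $\phi_{ab}$ automatically carries $1$ as an eigenvalue, the remaining data being a $2\times 2$ block of determinant $\pm 1$. The favourable and, I expect, actual scenario is that $\phi_{ab}$ has only real positive eigenvalues; in particular, if $\phi$ is homologically unipotent (or even acts as the identity on $H_1(\Sigma)$, which is consistent with the single-variable Alexander polynomial of $W$ being a power of $t-1$), then Theorem~\ref{Perron-Rolfsen} supplies a $\phi_*$-invariant bi-ordering of $F_3$ — and in the identity case Proposition~\ref{auto identity} gives the sharper statement that \emph{every} standard ordering is preserved. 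Either way, Proposition~\ref{prop_fibration-HNN} (equivalently Proposition~\ref{HNN}) then yields that $\pi_1({\Bbb W})$ is bi-orderable. This is precisely the mechanism behind the figure-eight knot in Theorem~\ref{thm: one cusp}, and the contrast with its sibling, whose negative eigenvalues obstruct bi-ordering via Theorem~\ref{Clay-Rolfsen}, makes a favourable sign for the Whitehead monodromy plausible.

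The main obstacle I anticipate is twofold. First is the concrete topological bookkeeping: choosing the correct fibration of ${\Bbb W}$ and writing the monodromy as an honest word map on a free basis, where a mistaken fiber or an error in the generators would spoil the eigenvalue computation. Second, and more serious, is that the eigenvalue criterion may be inconclusive: should the $2\times 2$ block have eigenvalues on the unit circle that are not positive reals, Theorem~\ref{Perron-Rolfsen} does not apply and I would instead construct the invariant bi-ordering of the fiber group by hand. In that event I would argue in the spirit of Section~\ref{explicit}, either building the positive cone through the lower central series and verifying $\phi_*$-invariance level by level, or realizing $\phi_*$ as the restriction to a finite-index or characteristic free subgroup of an inner automorphism of a larger bi-ordered free group. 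In all cases, once a $\phi_*$-invariant bi-ordering of $\pi_1(\Sigma)$ is secured, Proposition~\ref{prop_fibration-HNN} closes the argument.
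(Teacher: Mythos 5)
Your proposal follows essentially the same route as the paper: the Whitehead link complement is fibered with fiber a genus-one surface with two boundary components, so the fiber group is $F_3$, and the paper computes that the abelianized monodromy has characteristic polynomial $(t-1)^3$ --- exactly the unipotent, all-eigenvalues-positive scenario you predict --- after which Theorem~\ref{Perron-Rolfsen} and Proposition~\ref{prop_fibration-HNN} finish the argument just as you describe. The one step you defer, namely exhibiting the fibration and reading off the monodromy on a free basis, is carried out in the paper by realizing the fiber surface as a Murasugi sum (plumbing) of Hopf bands and composing their monodromies via Gabai's theorems.
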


\begin{center}
\begin{figure}
\includegraphics[width=2.5in]{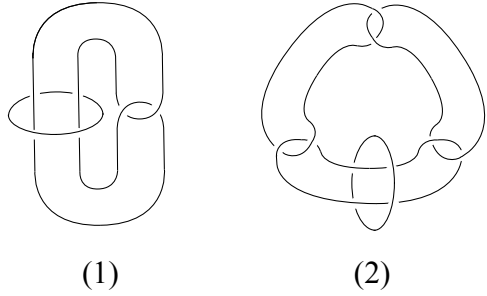}
\caption{(1) Whitehead link $W$. 
(2) ${\Bbb W}_3$ is homeomorphic to the complement of this link.}
\label{fig_2Whitehead}
\end{figure}
\end{center}

We first recall the Murasugi sum of surfaces. 
See \cite[Section~4.2]{Kawauchi96} for more details. 
Let $R_1$, $R_2$ and $R$ be compact oriented surface embedded in $S^3$. 
We say that $R$ is a ($2n$-){\it Murasugi sum} of $R_1$ and $R_2$ 
if we have the following. 
\begin{enumerate}
\item[(1)] 
$R= R_1 \cup R_2$, and $R_1 \cap R_2$ is a disk $D$ 
which satisfies the following. 
\begin{enumerate}
\item[(1.1)] 
$\partial D$ is a $2n$-gon with edges $a_1, b_1, a_2, b_2, \cdots, a_n, b_n$ 
enumerated in this order. 

\item[(1.2)] 
$a_i \subset \partial R_1$, and $a_i$ is a proper arc in $R_2$ for all $i$. 

\item[(1.3)] 
$b_i \subset \partial R_2$, and $b_i$ is a proper arc in $R_1$ for all $i$. 
\end{enumerate}

\item[(2)] 
There exist $3$-balls ${\Bbb B}_1$ and ${\Bbb B}_2$ in $S^3$ such that 
\begin{enumerate}
\item[(2.1)] 
${\Bbb B}_1 \cup {\Bbb B}_2 = S^3$, 
${\Bbb B}_1 \cap {\Bbb B}_2 = \partial {\Bbb B}_1 = \partial {\Bbb B}_2 = S^2$. 

\item[(2.2)] 
${\Bbb B}_i \supset R_i$ for $i= 1,2$. 

\item[(2.3)] 
$\partial {\Bbb B}_1 \cap R_1 = \partial {\Bbb B}_2 \cap R_2= D$.
\end{enumerate}
\end{enumerate}

In this paper we only use $2$-, $4$-Murasugi sums, see Figures~\ref{fig_2-MurasugiSum} and \ref{fig_M-sum}. 
A $2$-Murasugi sum corresponds to a connected sum of links.  
A $4$-Murasugi sum is a so-called  {\it plumbing}. 
To state the next theorem, 
we let $L_i = \partial R_i$ for $i= 1,2$ and $L= \partial R$ which are oriented links.

\begin{thm}[Theorem~1.3 and Corollary~1.4  in Gabai~\cite{Gabai85}] 
\label{thm_Gabai}
Suppose that $R$ is a Murasugi sum of $R_1$ and $R_2$. 
Then $L$ is a fibered link with a fiber $R$ if and only if 
$L_i$ is a fibered link with a fiber $R_i$ for $i= 1,2$. 
\end{thm}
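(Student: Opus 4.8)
The plan is to recast fiberedness in the language of sutured manifolds and then to track how the Murasugi sum splits the relevant sutured manifold along a disk. Recall that a Seifert surface $R$ for an oriented link $L$ is a fiber (so that $L$ is fibered) precisely when cutting $S^3$ along $R$ produces a product: the complementary sutured manifold $M_R := S^3 \setminus \mathrm{int}(N(R))$, with the two parallel copies $R_+, R_-$ of $R$ as its sutured boundary, is homeomorphic to $R \times [0,1]$. The same reformulation applies to $R_1$ and $R_2$, so the theorem becomes the assertion that $M_R$ is a product sutured manifold if and only if both $M_{R_1}$ and $M_{R_2}$ are.

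First I would set up the decomposing surface. The separating sphere $S^2 = \partial \mathbb{B}_1 = \partial \mathbb{B}_2$ meets $R$ in exactly the gluing disk $D = R_1 \cap R_2$, so the complementary disk $D' := S^2 \setminus \mathrm{int}(D)$ is a properly embedded disk in $M_R$ whose boundary runs over the sutures in the prescribed $2n$-gon pattern $a_1 b_1 \cdots a_n b_n$. I would then verify that performing a sutured manifold decomposition of $M_R$ along $D'$ cuts $S^3$ into the two balls $\mathbb{B}_1, \mathbb{B}_2$ and recovers exactly the complementary sutured manifolds $M_{R_1} \subset \mathbb{B}_1$ and $M_{R_2} \subset \mathbb{B}_2$. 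This identification is the geometric core of the naturality of the Murasugi sum, and it reduces everything to how productness behaves under this single decomposition.

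For the easy implication (each $R_i$ a fiber $\Rightarrow$ $R$ a fiber) I would simply reverse the cut: gluing two product sutured manifolds along a boundary disk again yields a product, so $M_R \cong R \times [0,1]$. Equivalently, one may invoke Stallings' observation that the monodromy of the Murasugi sum is the composition $h_1 h_2$ of the two monodromies, each extended by the identity across the complementary region, which exhibits $R$ as a fiber directly.

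The hard implication ($R$ a fiber $\Rightarrow$ each $R_i$ a fiber) is where the real work lies and is the main obstacle, since productness is not obviously inherited by the pieces of a decomposition: a priori one of the $M_{R_i}$ could fail to be a product even though their assembly is. Here I would lean on Gabai's sutured manifold technology: a product sutured manifold is taut, tautness passes in both directions across a decomposition along a sufficiently well-behaved (conditioned) surface such as $D'$, and among taut sutured manifolds with the prescribed boundary the product is characterized by an Euler-characteristic count. Thus both decomposed pieces $M_{R_1}, M_{R_2}$ are taut, and this count forces each to be a product. The delicate point I expect to fight with is arranging the decomposition along $D'$ to be genuinely conditioned and confirming that it introduces no spurious sutures, so that the two pieces really are the complementary sutured manifolds of $R_1$ and $R_2$; this verification is precisely the content of Gabai's analysis in \cite{Gabai85}.
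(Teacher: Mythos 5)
This statement is not proved in the paper at all: it is quoted directly from Gabai (Theorem~1.3 and Corollary~1.4 of \cite{Gabai85}), so there is no in-paper argument to compare against. That said, your framework is the right one and is the one Gabai himself uses: recast fiberedness as the statement that the complementary sutured manifold $M_R$ is a product, observe that the disk $D' = S^2 \setminus \mathrm{int}(D)$ in the splitting sphere decomposes $M_R$ into $M_{R_1} \sqcup M_{R_2}$, and note that the easy direction amounts to gluing two products along a disk --- equivalently, the monodromy composition formula, which is exactly the content of the paper's Theorem~\ref{thm_Gabai2}.

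The genuine gap is in your mechanism for the hard direction. You claim that among taut sutured manifolds with the prescribed boundary, the product is characterized by an Euler-characteristic count, so that tautness of $M_{R_1}$ and $M_{R_2}$ forces each to be a product. This is false. Tautness of a complementary sutured manifold only says that the Seifert surface is Thurston-norm minimizing, not that it is a fiber: the complementary sutured manifold of a minimal-genus Seifert surface of any non-fibered knot is taut, has exactly the same suture pattern, boundary data and Euler characteristics as the product would, and yet is not a product. If an Euler-characteristic count among taut sutured manifolds could detect products, fiberedness would be determined by genus alone, which is absurd. So your argument establishes tautness of the two pieces and then stops short of the conclusion. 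Closing this gap is precisely the content of Gabai's second Murasugi-sum paper: the converse is proved by a direct geometric analysis of how the decomposing disk $D'$ sits inside the product $R \times [0,1]$ (isotoping it to a controlled position with respect to the product structure and extracting a product structure on each complementary piece), not by a norm or Euler-characteristic argument. As written, your proposal correctly sets up the decomposition and proves the easy implication, but the implication ``$M_R$ a product $\Rightarrow$ each $M_{R_i}$ a product'' remains unproved.
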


Let $R_i$, $R$ and $L_i$ be as in Theorem~\ref{thm_Gabai}, and 
let $f_i: R_i \rightarrow R_i$ denote the monodromy. 
We may assume that $f_i|_{\partial R_i}$ equals the identity map $ id$. 
Let $R$ be a Murasugi sum of $R_1$ and $R_2$. 
By Theorem~\ref{thm_Gabai}, 
$L = \partial R$ is a fibered link with a fiber $R$ if $L_i$ is a fibered link with a fiber $R_i$ for $i= 1,2$. 
The following theorem tells us what the monodromy  $f: R \rightarrow R$ looks like. 

\begin{thm}[Corollary~1.4 in \cite{Gabai85}]
\label{thm_Gabai2}
The monodromy $f: R \rightarrow R$ is given by 
the product (i.e, the composition) $f= f_2'  f_1' : R \rightarrow R$, 
where $f_i'|_{R_i}$ equals $f_i$ and 
$f_i'|_{R \setminus R_i}$ equals  the identity map for $i= 1,2$.   
\end{thm}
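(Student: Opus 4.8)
The plan is to realize the monodromy as the first-return map of a flow transverse to the fibers, and to build that flow so that it respects the sphere $S^2$ separating ${\Bbb B}_1$ from ${\Bbb B}_2$. By Theorem~\ref{thm_Gabai} we already know that $L = \partial R$ is fibered with fiber $R$, so the complement $S^3 \setminus L$ is an $R$-bundle over $S^1$; cutting $S^3$ along $R$ produces a product $R \times [0,1]$, and the monodromy $f$ is the gluing homeomorphism $R \times \{1\} \to R \times \{0\}$, which we may normalize to be the identity on $\partial R$. Equivalently, $f$ is the return map to $R$ of any flow $\Phi$ positively transverse to every fiber. The entire problem is then to choose $\Phi$ adapted to the decomposition $S^3 = {\Bbb B}_1 \cup_{S^2} {\Bbb B}_2$.

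First I would put $R$ in the Murasugi normal form guaranteed by conditions (1) and (2): $R$ meets $S^2$ exactly in the disk $D = R_1 \cap R_2$, with $R_1 \subset {\Bbb B}_1$ and $R_2 \subset {\Bbb B}_2$. I would then construct $\Phi$ so that it is tangent to $S^2$ away from a neighborhood of $D$ and crosses $S^2$ transversally only through that neighborhood, oriented so that a point of $R$ first traverses ${\Bbb B}_1$ and then ${\Bbb B}_2$ before returning to $R$. This is where the normal form does the essential work: since $\partial R$ meets $S^2$ only along the boundary arcs $a_i, b_i$ of $D$, the flow can be arranged so that the return map factors as a ``${\Bbb B}_1$-map'' followed by a ``${\Bbb B}_2$-map''.

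Next I would identify each factor. Inside ${\Bbb B}_1$ the surface $R_1$ is a fiber of its own boundary link $L_1$, and the part of $\Phi$ lying in ${\Bbb B}_1$ is, by the same return-map construction applied to $R_1$, a flow whose return map on $R_1$ is the monodromy $f_1$; on the complementary part $R \setminus R_1 = R_2 \setminus D$ the flow encounters nothing of $R_1$ and returns points to themselves. Because $f_1$ fixes $\partial R_1 \supset \bigcup_i a_i$ pointwise, extension by the identity is continuous along the interface arcs $a_i$, so the ${\Bbb B}_1$-return map is exactly $f_1'$. The symmetric argument in ${\Bbb B}_2$, using $f_2|_{\partial R_2} = \mathrm{id}$ to match along the $b_i$, gives the ${\Bbb B}_2$-return map $f_2'$. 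Composing in the order in which the flow visits the two balls yields $f = f_2' f_1'$.

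The main obstacle is the identification of these local return maps: one must prove that the globally chosen transverse flow, cut off to each ball, genuinely realizes $f_i$ on $R_i$ and the identity elsewhere, and that the order of composition is as claimed. I expect this is most cleanly made rigorous in Gabai's sutured-manifold language, where cutting $S^3$ along $R$ and then decomposing the complementary sutured manifold along the product disks spanned by the arcs $a_i$ and $b_i$ reduces it to the complementary sutured manifolds of $R_1$ and $R_2$; the product structures $R_i \times I$ then reassemble to $R \times I$ with gluing $f_2' f_1'$. I would verify that these product-disk decompositions are precisely dual to the adapted flow above, which simultaneously pins down the factorization and fixes the order of composition.
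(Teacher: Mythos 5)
First, note that the paper contains no proof of Theorem~\ref{thm_Gabai2}: it is quoted from \cite{Gabai85} (Corollary~1.4), so your proposal can only be measured against Gabai's argument --- which, in your own closing paragraph, you defer to rather than reproduce. The genuine gap is at the very first step of your construction. You posit a flow $\Phi$ transverse to all fibers that is tangent to the splitting sphere $S^2$ away from a neighborhood of $D$; equivalently, you assume the given fibration of $S^3 \setminus L$ can be isotoped so that the properly embedded disk $D' = S^2 \setminus \mathrm{int}(D)$ sits in standard product position with respect to the fibers. That is not a harmless normalization: a priori the decomposition induced on $D'$ by the fibration can be complicated, and putting $D'$ into product position is precisely the nontrivial content of Gabai's sutured-manifold decomposition (for a $2n$-Murasugi sum the disk $D'$ meets the sutures $2n$ times, so for $n \ge 2$ it is not even a product disk, and taming it needs the machinery of \cite{Gabai85}, not an ad hoc isotopy). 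So the sentence ``I would then construct $\Phi$ so that\dots'' presupposes what must be proved. If you want a route independent of that machinery, the workable direction is the reverse of yours: build a fibration explicitly from the fibrations of the summands in Stallings' fashion --- glue the product structures $R_i \times I \subset {\Bbb B}_i$ along $D \times I$ --- whose monodromy is visibly the composite of $f_1'$ and $f_2'$, and then invoke uniqueness of fibrations (two fibrations of a compact $3$-manifold over $S^1$ in the same cohomology class are isotopic) to conclude this computes the monodromy of the given fibration. Your flow picture needs that same uniqueness input, and it appears nowhere in the proposal.

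Second, the order of composition is not ``fixed'' by your construction; as written it lands on the wrong side of the paper's convention. Convention~\ref{convention_product} decrees that the symbol $f_2' f_1'$ means: apply $f_2'$ first, then $f_1'$. Your flow traverses ${\Bbb B}_1$ first, so a point undergoes $f_1'$ first; in the paper's left-to-right notation that is $f_1' f_2'$, not $f_2' f_1'$. These genuinely differ as maps: $f_1$ is the identity only on $\partial R_1$ (hence on the arcs $a_i$), but it can move interior points of $D \subset R_1$, so $f_1'$ and $f_2'$ need not commute. The two composites are conjugate (by $f_1'$), hence define the same fibered link, which is why the slip is invisible at the level of Theorem~\ref{thm_Gabai}; but to identify the monodromy with the specific formula of Theorem~\ref{thm_Gabai2} you must either reverse the flow or traverse ${\Bbb B}_2$ first, and derive that choice from the orientation conventions --- asserting ``composing in the order in which the flow visits the two balls yields $f = f_2' f_1'$'' does not do this.
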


\begin{convention}
\label{convention_product}
The product $f_2'  f_1'$ means that 
we first apply $f_2'$, then apply $f_1'$. 
\end{convention}


A {\it Hopf band} is an unknotted annulus in $S^3$. 
Two kinds of 
hopf bands $S_+$ and $S_-$ as in Figure~\ref{fig_Hopfband}(1) and (2) are called 
{\it positive} and ${\it negative}$ respectively. 
The links $L_+ = \partial S_+$ and $L_- = \partial S_-$ are called the {\it Hopf links}. 
It is known that 
$L_{\pm}$  is a fibered link with a fiber $S_{\pm}$. 
The monodromy $f_+ : S_+ \rightarrow S_+$ 
(resp. $f_- : S_- \rightarrow S_-$) 
 is the right handed Dehn twist (resp.  left handed Dehn twist) 
about the core circle of the annulus, see \cite[Figure~1]{GK90}.

\begin{center}
\begin{figure}
\includegraphics[width=4.8in]{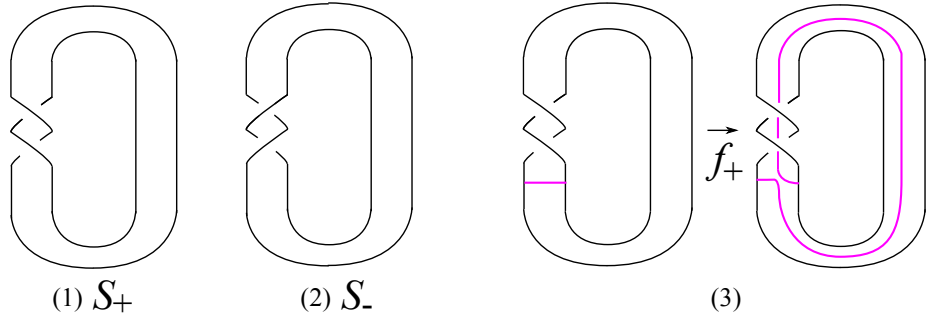}
\caption{Hopf bands (1) $S_+$ and (2) $S_-$. 
(3) Monodromy $f_+: S_+ \rightarrow S_+$ is the right handed Dehn twist about the core circle of  $S_+$. 
The figure illustrates the image of a proper arc (as shown in the left) 
under $f_+$.} 
\label{fig_Hopfband}
\end{figure}
\end{center}

\begin{center}
\begin{figure}
\includegraphics[width=4.6in]{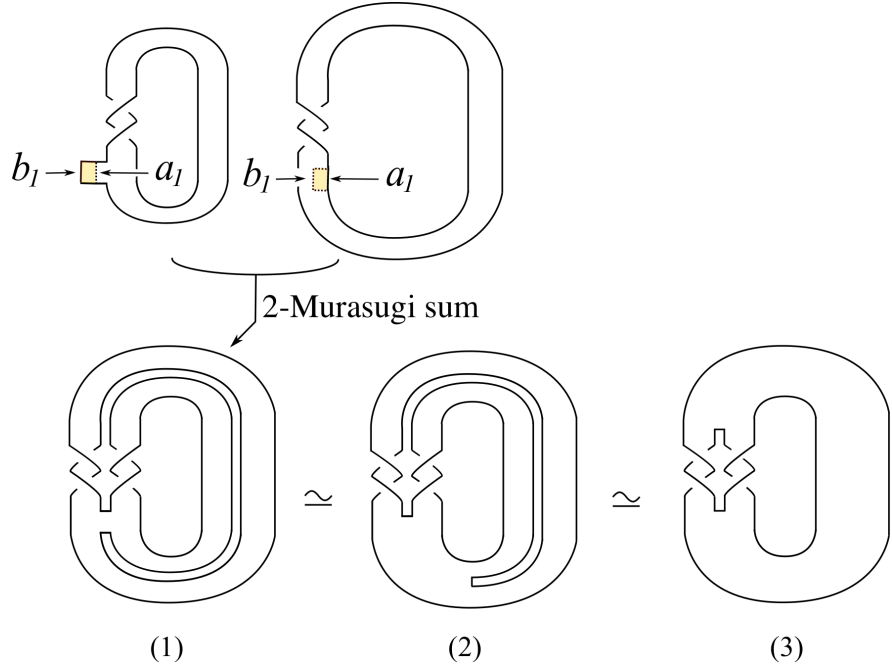}
\caption{(1) $2$-Murasugi sum $\hat{S}$ of  $S_+$ and  $S_-$. 
(The $2$-gon with edges $a_1$, $b_1$ is shaded.)
(2)(3) Surfaces which are isotopic to $\hat{S}$.}
\label{fig_2-MurasugiSum}
\end{figure}
\end{center}

\begin{center}
\begin{figure}
\includegraphics[width=5in]{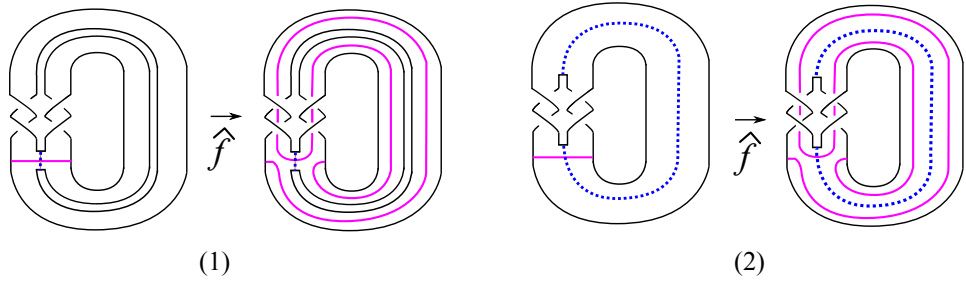}
\caption{(1) $\hat{f}= f_+' f_-':  \hat{S} \rightarrow \hat{S}$ using $\hat{S}$ in Figure~\ref{fig_2-MurasugiSum}(1).  
(2) $\hat{f}= f_+' f_-':  \hat{S} \rightarrow \hat{S}$ using $\hat{S}$ in Figure~\ref{fig_2-MurasugiSum}(3).  
Both (1) and (2) illustrate the images of solid and broken arcs  (as shown in the left) under $\hat{f}$.}
\label{fig_TwoHopfbands}
\end{figure}
\end{center}

\begin{proof}[Proof of Theorem~\ref{thm_Whitehead}]

We will find a fibered surface $S$ of the Whitehead link $W$ and its monodromy $f: S \rightarrow S$. 
Let $\hat{S}$ be the $2$-Murasugi sum of hopf bands $S_+$ and $S_-$, 
see Figure~\ref{fig_2-MurasugiSum}(1).  
Figure~\ref{fig_2-MurasugiSum}(3) is a surface which is isotopic to $\hat{S}$.  
By Theorem~\ref{thm_Gabai}, 
$\partial \hat{S}$ is a fibered link with a fiber $\hat{S}$. 
Theorem~\ref{thm_Gabai2} tells us that 
$f_+'  f_-': \hat{S} \rightarrow \hat{S}$ serves 
the monodromy $\hat{f}: \hat{S} \rightarrow \hat{S}$. 
Figure~\ref{fig_TwoHopfbands}(1)(2) shows the images of two proper arcs (solid and broken arcs) 
under $\hat{f}: \hat{S} \rightarrow \hat{S}$. 
This is the so-called {\it point-pushing map}, see \cite[Section~4.2]{FM12}.

A $4$-Murasugi sum of $\hat{S}$ and $S_+$ gives rise to a fibered surface $S$ of $W$, 
 which is a torus with $2$ boundary components, 
see Figure~\ref{fig_M-sum}. 
By Theorem~\ref{thm_Gabai2}, 
the product $  (\hat{f})' f_+': S \rightarrow S$ serves the monodromy $f: S \rightarrow S$. 
Note that $ (\hat{f})': S \rightarrow S$ is a pushing map along the arc $m$, 
and $f_+': S \rightarrow S$ is the right handed Dehn twist about a simple closed curve $\ell$, 
see Figure~\ref{fig_Simple}(1).

Shrinking  one of the boundary components to a puncture, 
one can take a simple model as a representative of $S$, which is a torus with one boundary component and with a puncture as in Figure~\ref{fig_Simple}(2).  
Abusing the notation, we denote such a simple model by the same notation $S$. 
We also denote 
the corresponding loop based at the puncture and the corresponding closed loop in the simple model by the same notations $m$ and $\ell$, 
see Figure~\ref{fig_Simple}(2).

\begin{center}
\begin{figure}
\includegraphics[width=3.5in]{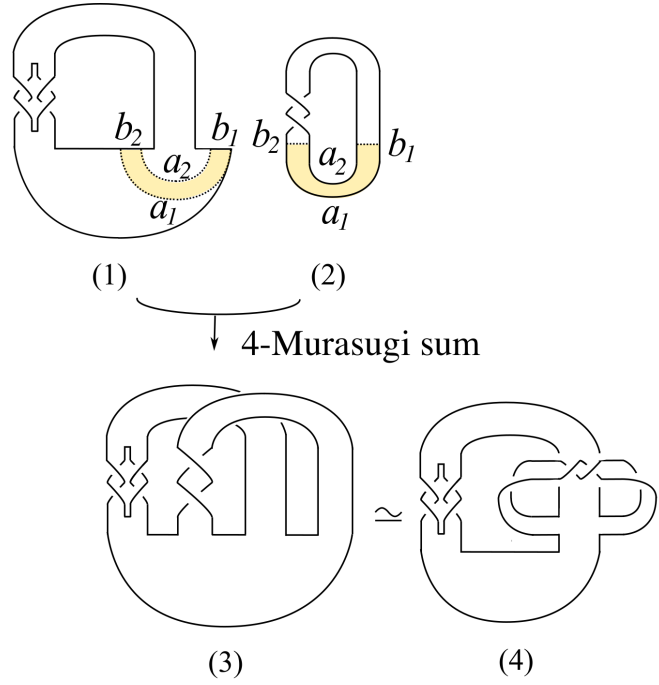}
\caption{(1) $\hat{S}$. (2) $S_+$. 
(3)(4) $4$-Murasugi sum of $\hat{S}$ and $S_+$ is a fibered surface $S$ of  $W$. 
(The $4$-gon with edges $a_1, b_1, a_2,b_2$  is shaded in the figures (1)(2).)}
\label{fig_M-sum}
\end{figure}
\end{center}

\begin{center}
\begin{figure}
\includegraphics[width=3in]{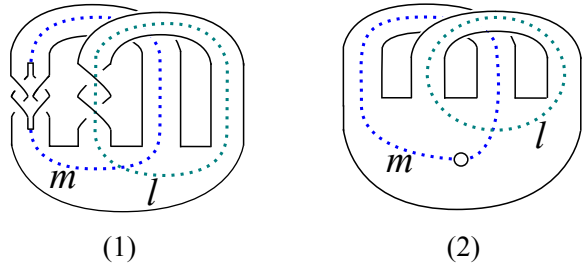}
\caption{(1) A fibered surface $S$ of $W$. 
(2) A simple model of $S$.}
\label{fig_Simple}
\end{figure}
\end{center}

\begin{center}
\begin{figure}
\includegraphics[width=4.5in]{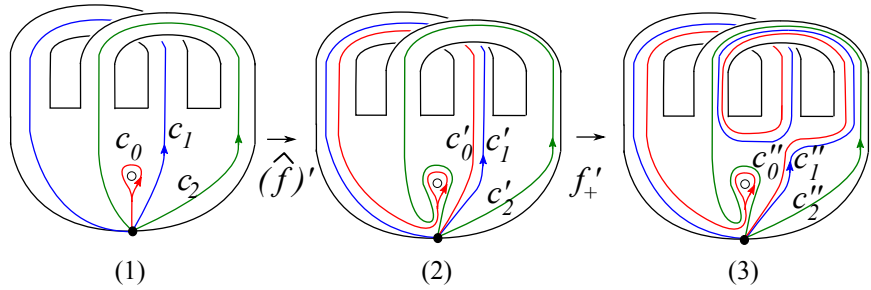}
\caption{(1) Loops $c_0$, $c_1$, $c_2$. 
(2) Images of $c_i$'s under $(\hat{f})'$, 
where $c_i':= (\hat{f})'(c_i)$. 
(3) Images of $c_i$'s under $f=  (\hat{f})'f_+'$, 
where $c_i'':= f(c_i)$.}
\label{fig_W-monodromy}
\end{figure}
\end{center}

We choose a basepoint $p$  on $\partial S$, 
and take oriented loops $c_0$, $c_1$ and $c_2$ based at this point 
so that $c_0$ is a loop surrounding the puncture, and 
$c_1$, $c_2$ are the meridian and the longitude of the torus, see Figure~\ref{fig_W-monodromy}(1). 
Abusing the notations, we denote the equivalence class of $c_i$ in $\pi_1(S,p) = F_3$ by the same notation $c_i$. 
(Then $\{c_0,c_1, c_2\}$ is a generating set of $\pi_1(S,p) $.) 
The images of $c_1$, $c_2$ and $c_3$ under  $f_* = ((\hat{f})' f_+')_*:  \pi_1(S,p) \rightarrow  \pi_1(S,p)$ are given as follows. 
See Convention~\ref{convention_product}.  
\begin{eqnarray*}
f_*(c_0) &=& c_2 c_0^{-1} c_1 c_0 c_1^{-1} c_0 c_2^{-1}, 
\\
f_*(c_1) &=& c_2 c_0^{-1} c_1, 
\\
f_*(c_2) &=& c_2 c_0^{-1}, 
\end{eqnarray*}
see Figure~\ref{fig_W-monodromy}(3) for $f_*$. 
Let us consider the abelianization $(f_*)_{\mathrm{ab}}: {\Bbb Z}^3 \rightarrow {\Bbb Z}^3$. 
From the above computation of $f_*(c_i)$, we have 
\begin{eqnarray*}
(f_*)_{\mathrm{ab}}[c_0] &=& \hspace{3mm} [c_0], 
\\
(f_*)_{\mathrm{ab}}[c_1] &=&  - [c_0]+ [c_1]+ [c_2], 
\\
(f_*)_{\mathrm{ab}}[c_2] &=&  - [c_0] \hspace{10mm} + [c_2].
\end{eqnarray*}
By calculation one sees that the characteristic polynomial of $(f_*)_{\mathrm{ab}}$ equals 
$(t-1)^3$, and 
all eigenvalues of  $(f_*)_{\mathrm{ab}}$ are $1$.  
By Theorem~\ref{Perron-Rolfsen}, it follows that $f_*$ is order-preserving. 
Note that  
$\pi_1({\Bbb W})= F_3 \rtimes_{f_*} {\Bbb Z}$. 
By Proposition~\ref{prop_fibration-HNN}, we conclude that $\pi_1({\Bbb W})$ is bi-orderable. 
\end{proof}



\bibliographystyle{amsplain}
\bibliography{braidautord}

\end{document}